\newtheorem{theorem}{Theorem}[section]
\newtheorem{lemma}[theorem]{Lemma}
\newtheorem{proposition}[theorem]{Proposition}
\newtheorem{corollary}[theorem]{Corollary}
\numberwithin{equation}{section}
\def\ep{\varepsilon}
\def\D{\mathbb D}
\def\C{\mathbb C}
\def\R{\mathbb R}
\def\S{\mathbb S}
\def\N{\mathbb N}
\def\pa{\partial}
\def\b{\backslash}
\def\l{\langle}
\def\r{\rangle}
\def\Z{\mathbb Z}
\def\T{{\rm Tr}}
\def\U{\mathcal{U}}
\def\H{\mathcal H}
\def\F{\mathcal F}
\begin{document}

\title[Convexity properties of the difference $\zeta_\Omega-\zeta_\D$]{Convexity properties of the difference over the real axis between the Steklov zeta functions of a smooth planar domain with $2\pi$ perimeter and of the unit disk }
\author{Alexandre Jollivet}
\thanks{
The author is partially supported by the PRC n\textsuperscript{o} 2795 CNRS/RFBR : Probl\`emes inverses et int\'egrabilit\'e.}

\address{Laboratoire de Math\'ematiques Paul Painlev\'e,
CNRS UMR 8524/Universit\'e Lille 1 Sciences et Technologies,
59655 Villeneuve d'Ascq Cedex, France}
\email{alexandre.jollivet@math.univ-lille1.fr}

\keywords{Steklov spectrum; Dirichlet-to-Neumann operator; zeta function; inverse spectral problem}

\subjclass[2000]{Primary 35R30; Secondary 35P99}

\begin{abstract}
We consider the zeta function $\zeta_\Omega$ for the Dirichlet-to-Neumann operator of a simply connected planar domain $\Omega$ bounded by a smooth closed curve of perimeter $2\pi$. We prove that $\zeta_\Omega''(0)\ge \zeta_{\D}''(0)$ with equality if and only if $\Omega$ is a disk where $\D$ denotes the closed unit disk. We also provide an elementary proof that for a fixed  real $s$ satisfying $s\le-1$ the estimate $\zeta_\Omega''(s)\ge \zeta_{\D}''(s)$ holds with equality if and only if $\Omega$ is a disk. We then bring examples of domains $\Omega$ close to the unit disk where this estimate fails to be extended to the interval $(0,2)$. Other computations related to previous works are also detailed in the remaining part of the text.
\end{abstract}

\maketitle
\section{Introduction}

Let
$\Omega$
be a simply connected
%(possibly multisheet)
planar domain bounded by a
$C^\infty$-smooth closed curve
$\partial\Omega$.
%See \cite[Section 4]{JS} for the discussion of simply connected multisheet planar domains.
The {\it Dirichlet-to-Neumann operator} of the domain
$$
\Lambda_\Omega:C^\infty(\partial\Omega)\rightarrow C^\infty(\partial\Omega)
$$
is defined by
$\Lambda_\Omega f=\left.\frac{\partial u}{\partial\nu}\right|_{\partial\Omega}$,
where
$\nu$
is the outward unit normal to
$\partial\Omega$
and
$u$
is the solution to the Dirichlet problem
$$
\Delta u=0\quad\mbox{\rm in}\quad\Omega,\quad u|_{\partial\Omega}=f.
$$
The Dirichlet-to-Neumann operator is a first order pseudodifferential operator. Moreover, it is a non-negative self-adjoint operator with respect to the $L^2$-product
$$
\l u,v\r=\int\limits_{\partial\Omega}u\bar v\,ds,
$$
where
$ds$
is the Euclidean arc length of the curve
$\partial\Omega$.
In particular, the operator
$\Lambda_\Omega$
has a non-negative discrete eigenvalue spectrum
$$
\mbox{\rm Sp}(\Omega)=\{0=\lambda_0(\Omega)<\lambda_1(\Omega)\leq\lambda_2(\Omega)\leq\dots\},
$$
where each eigenvalue is repeated according to its multiplicity. The spectrum is called the {\it Steklov spectrum} of the domain
$\Omega$.
Steklov eigenvalues depend on the size of
$\Omega$
in the obvious manner:
$\lambda_k(c\Omega)=c^{-1}\lambda_k(\Omega)$
for
$c>0$.
Therefore it suffices to consider domains satisfying the {\it normalization condition}
\begin{equation}
\mbox{Length}(\partial\Omega)=2\pi.
                                     \label{1.1}
\end{equation}

Let
${\mathbb S}=\partial{\mathbb D}=\{e^{i\theta}\}\subset{\mathbb C}$
be the unit circle. The Dirichlet-to-Neumann operator of the unit disk
${\mathbb D}=\{(x,y)\mid x^2+y^2\leq1\}$
will be denoted by
$\Lambda:C^\infty({\mathbb S})\rightarrow C^\infty({\mathbb S})$,
i.e.,
$\Lambda=\Lambda_{\mathbb D}$.
The alternative definition of the operator is given by the formula
$\Lambda e^{in\theta}=|n|e^{in\theta}$
for an integer
$n$.
Then the Steklov eigenvalues of the disk are given by
$$
\lambda_k(\D)=\left\lfloor{k+1\over 2}\right\rfloor, k\in\N,
$$
where
$\lfloor x\rfloor$
stands for the integer part of
$x\in\R$.

Under condition \eqref{1.1}, Steklov eigenvalues of the domain $\Omega$ have the following asymptotics \cite[Theorem~1]{E}:
\begin{equation}
\lambda_k(\Omega)=\lambda_k(\D) +O(k^{-\infty})\quad\mbox{as}\quad k\to\infty,
                                     \label{1.2}
\end{equation}

Due to the asymptotics, the {\it zeta function of the domain}
$\Omega$
$$
\zeta_\Omega(s)=\mbox{\rm Tr}[\Lambda_\Omega^{-s}]=\sum\limits_{k=1}^\infty\big(\lambda_k(\Omega)\big)^{-s}
$$
is well defined for
$\Re s>1$. 
Then
$\zeta_\Omega$
extends to a meromorphic function on
${\mathbb C}$
with the unique simple pole at
$s=1$.  The zeta function $\zeta_\D$ of the unit disk is equal to $2\zeta_R$, where
$\zeta_R(s)=\sum_{n=1}^\infty n^{-s}$
is the classical Riemann zeta function

Moreover, the difference
$\zeta_\Omega(s)-\zeta_\D(s)$
is an entire function \cite{E}. Observe also that
$\zeta_\Omega(s)$
is real for a real
$s$.

The main result of the present paper is the following

\begin{theorem}
\label{thm1}
For a smooth simply connected bounded planar domain
$\Omega$ of perimeter $2\pi$, the inequality
\begin{equation}
\sum_{k=1}^{\infty}\Big[\ln(\lambda_k)^2-\ln\big(\left\lfloor{k+1\over 2}\right\rfloor\big)^2\Big]=(\zeta_\Omega-\zeta_{\D})''(0)\ge 0 \label{M1a}
\end{equation}
holds. Moreover equality in \eqref{M1a} holds if and only if $\Omega$ is a round disk.
\end{theorem}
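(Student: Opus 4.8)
I would work through the conformal parametrization. Let $\Phi:\D\to\Omega$ be a Riemann map, smooth up to $\S$, and set $\rho:=|\Phi'|$ on $\S$; then the normalization \eqref{1.1} says precisely that $\frac1{2\pi}\int_\S\rho\,d\theta=1$. Pulling the Dirichlet problem back through $\Phi$ shows that $\Lambda_\Omega$ is unitarily equivalent, as an operator on $L^2(\S)$, to $\Lambda_\rho:=\rho^{-1/2}\Lambda\rho^{-1/2}$, so $\zeta_\Omega(s)={\rm Tr}\,\Lambda_\rho^{-s}$ and $\Omega$ is a round disk exactly when $\rho\equiv1$. The only constraint on $\rho$ is that $\log\rho$ be a boundary value of a harmonic function (since $\log\Phi'$ is holomorphic in $\D$), and \eqref{1.2}, which applies to $\Lambda_\rho$ because $\rho$ is smooth, makes the series in \eqref{M1a} absolutely convergent and freely regroupable.

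The first step is to regroup \eqref{M1a} along the double spectrum of the disk. Since $\lambda_{2n-1}(\D)=\lambda_{2n}(\D)=n$,
\[
(\zeta_\Omega-\zeta_\D)''(0)=\sum_{n\ge1}C_n,\qquad
C_n:=\big(\ln\lambda_{2n-1}(\Omega)\big)^2+\big(\ln\lambda_{2n}(\Omega)\big)^2-2(\ln n)^2 .
\]
Here $C_1=(\ln\lambda_1)^2+(\ln\lambda_2)^2\ge0$ with equality iff $\lambda_1=\lambda_2=1$, and for $n\ge2$ the inequality $a^2+b^2\ge\frac12(a+b)^2$ (applied to $a=\ln\lambda_{2n-1}(\Omega)$, $b=\ln\lambda_{2n}(\Omega)$) gives $C_n\ge0$ whenever $\lambda_{2n-1}(\Omega)\lambda_{2n}(\Omega)\ge n^2$. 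Such a pointwise cluster bound is false in general: a small perturbation of $\D$ in a high Fourier mode splits the corresponding cluster almost symmetrically about $n$, and because $\ln n>1$ for $n\ge3$ this makes that $C_n$ negative. So the estimate can only survive the summation, and carrying out the summation is the crux.

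To do so I would pass to the spectral shift function $\xi$ of the pair $(\Lambda_\Omega,\Lambda_\D)$, which lies in $L^1$ because $\Lambda_\Omega-\Lambda_\D$ is smoothing in the arc-length parametrization; the Birman–Krein formula then gives $(\zeta_\Omega-\zeta_\D)''(0)=2\int_0^\infty\frac{\ln\lambda}{\lambda}\,\xi(\lambda)\,d\lambda$, and one expresses $\xi$ — equivalently, the eigenvalue clusters $\{\lambda_{2n-1}(\Omega),\lambda_{2n}(\Omega)\}$ — through the conformal data of $\partial\Omega$ by expanding $\Lambda_\rho$ about $\Lambda$ and tracking how the $n$-th cluster is driven by the $n$-th mode of $\rho$. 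The feature special to the exponent $s=0$ is that the weight $\lambda\mapsto\ln\lambda/\lambda$ changes sign only once, at $\lambda=1$; for $s\in(0,2)$ the relevant weight $\lambda^{-s-1}\ln\lambda\,(2-s\ln\lambda)$ has a second sign change, which is exactly what leaves room for the counterexamples constructed later. A summation argument, in which the contributions of the high clusters are absorbed by those of the low-order ones, should then give $\int_0^\infty\frac{\ln\lambda}{\lambda}\,\xi(\lambda)\,d\lambda\ge0$. Turning this absorption into a rigorous bound, rather than merely an asymptotic one, is the main obstacle; it is where the smoothness of $\partial\Omega$ and the precise conformal description of the Steklov spectrum have to be used.

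Finally, the equality statement should come out of the proof itself: the argument exhibits $(\zeta_\Omega-\zeta_\D)''(0)$ as a manifestly nonnegative functional of $\rho$ that vanishes only when $\rho\equiv1$, i.e. only when $\Omega$ is a round disk; the converse is trivial.
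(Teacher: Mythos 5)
There is a genuine gap: your argument does not actually establish either half of the theorem. For the inequality, after correctly observing that the cluster-by-cluster estimate fails, you reduce everything to showing $\int_0^\infty \lambda^{-1}\ln\lambda\;\xi(\lambda)\,d\lambda\ge 0$ for the spectral shift function of the pair $(\Lambda_\Omega,\Lambda_\D)$, and you yourself flag the required ``absorption'' of the high clusters by the low ones as the main obstacle, without supplying it. But that step \emph{is} the content of the theorem: by Weinstock's inequality $\lambda_1(\Omega)<1$ whenever $\Omega$ is not a disk, so $\xi>0$ on an interval just below $\lambda=1$ where the weight $\ln\lambda/\lambda$ is negative, and the sign of the integral genuinely hinges on a global compensation you never prove. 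The paper needs no such new estimate at $s=0$: it invokes the earlier result \cite[Theorem 1.1]{JS4} that $\zeta_\Omega-\zeta_\D\ge 0$ on all of $\R$, and since $(\zeta_\Omega-\zeta_\D)(0)=0$, the point $s=0$ is an interior minimum of a smooth nonnegative function, so $(\zeta_\Omega-\zeta_\D)''(0)\ge 0$ is immediate.

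The equality case is also not handled. You assert that the (unfinished) argument would exhibit $(\zeta_\Omega-\zeta_\D)''(0)$ as a nonnegative functional of $\rho=|\Phi'|$ vanishing only when $\rho\equiv 1$; that cannot be literally true, because the Steklov spectrum, hence this functional, is invariant under precomposing $\Phi$ with M\"obius automorphisms of $\D$, so it vanishes for every $\rho$ conformally equivalent to the constant $1$, and in any case you give no rigidity mechanism at all. This is precisely where the paper does real work: it runs the deformation $\alpha_\tau$ of \cite{JS4} from $a$ to $\mathbf 1$, uses the trace inequality of Lemma \ref{lem_snd_der} to show that $\zeta_{\alpha_\tau}''(0)$ is nonincreasing in $\tau$ with limit $2\zeta_R''(0)$, and concludes that equality at $\tau=0$ forces the trace in \eqref{M6} to vanish for all $\tau$, whence, by the equality statement of Lemma \ref{lem_snd_der} (which rests on $\langle\Lambda_a\phi_1,\phi_1\rangle=1$ and \cite[Lemma 2.5]{JS2}), $a$ is conformally equivalent to $\mathbf 1$, i.e. $\Omega$ is a round disk. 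Without an analogue of that lemma and of the monotonicity along a flow (or some other rigidity input), your sketch cannot yield the ``only if'' direction.
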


Inequality \eqref{M1a} is a straighforward consequence of the identity $\zeta_\Omega(0)=\zeta_\D(0)$ and of the estimate $(\zeta_\Omega-\zeta_\D)\ge 0$ on the real axis $\R$ \cite[Theorem 1.1]{JS4}. Equality in \eqref{M1a} trivially holds if $\Omega$ is a round disk (in that case $\zeta_\Omega=\zeta_\D$). Hence the only statement that remains to be proved is the ``only if" part. The proof relies on the same deformation argument we used to prove \cite[Theorem 1.1]{JS4}.

The above result proves the strict convexity of $\zeta_\Omega-\zeta_{\D}$ around $0$ when the planar domain $\Omega$ is not a disk. It is in fact easier to prove convexity of $\zeta_\Omega-\zeta_{\D}$ on $(-\infty,-1]$. We have the following result.

\begin{proposition}
\label{prop1}
Let $\Omega$ be a smooth simply connected bounded domain with $2\pi$ perimeter. Let $s\in(-\infty,-1]$. We have
$$
(\zeta_\Omega-\zeta_\D)''(s)\ge 0,
$$
and there is equality if and only if $\Omega$ is a round disk.
\end{proposition}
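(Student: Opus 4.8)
The plan is to reduce everything to the spectral series and then to compare eigenvalues pair by pair. First, I would record that by the rapid decay \eqref{1.2} the entire function $\zeta_\Omega-\zeta_\D$ equals, for every $s\in\C$, the absolutely (and locally uniformly) convergent series $\sum_{k\ge 1}\bigl(\lambda_k(\Omega)^{-s}-\lambda_k(\D)^{-s}\bigr)$, so that term-by-term double differentiation gives
\[
(\zeta_\Omega-\zeta_\D)''(s)=\sum_{k\ge 1}\Bigl((\ln\lambda_k(\Omega))^2\,\lambda_k(\Omega)^{-s}-(\ln\lambda_k(\D))^2\,\lambda_k(\D)^{-s}\Bigr),
\]
again convergent by \eqref{1.2}. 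Hence it suffices to prove that this number is $\ge 0$, with equality only when $\Omega$ is a disk.

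Set $F_s(x)=(\ln x)^2x^{-s}$ for $x>0$. Then $F_s\ge 0$ with $F_s(x)=0$ exactly at $x=1$, and --- this is the one place the hypothesis $s\le-1$ enters --- $F_s$ is nondecreasing on $[1,+\infty)$, since there $F_s'(x)=x^{-s-1}\ln x\,(2-s\ln x)\ge 0$; put differently, with $y=\ln x$ the function $y\mapsto y^2e^{-sy}$ is convex and increasing on $[0,+\infty)$ when $s\le-1$, the convexity here being exactly that of $x\mapsto x^{-s}$ on $(0,+\infty)$, which holds precisely for $s\le-1$. Because $\lambda_1(\D)=\lambda_2(\D)=1$, the two terms $k=1,2$ of the series contribute the manifestly nonnegative quantity $(\ln\lambda_1(\Omega))^2\lambda_1(\Omega)^{-s}+(\ln\lambda_2(\Omega))^2\lambda_2(\Omega)^{-s}$, which vanishes if and only if $\lambda_1(\Omega)=\lambda_2(\Omega)=1$.

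For the remaining terms $k\ge 3$ I would group the spectrum of $\Omega$ into the consecutive pairs $\{\lambda_{2m-1}(\Omega),\lambda_{2m}(\Omega)\}$, $m\ge 2$, to be matched against $\{\lambda_{2m-1}(\D),\lambda_{2m}(\D)\}=\{m,m\}$, and use the Hersch--Payne--Schiffer inequalities bounding the products of consecutive Steklov eigenvalues, together with $\lambda_{2m-1}(\Omega)\le\lambda_{2m}(\Omega)$ and Weinstock's inequality $\lambda_1(\Omega)\le 1$ (with equality only for the disk). Combining these with the convexity and monotonicity of $F_s$ established above, the goal is the pairwise estimate $F_s(\lambda_{2m-1}(\Omega))+F_s(\lambda_{2m}(\Omega))\ge 2F_s(m)$. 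The hard part is that this pairwise estimate can fail by a (bounded) amount for a pair one of whose eigenvalues slips below $1$; I expect the fix to be global rather than local --- absorbing the finitely many such deficits into the strictly positive surplus already supplied by the small eigenvalues, and, if necessary, sharpening the bookkeeping by invoking the positivity $\zeta_\Omega-\zeta_\D\ge 0$ on all of $\R$ from \cite[Theorem~1.1]{JS4}. This balancing step is where I anticipate the real work lies.

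Finally, for the equality statement: if $(\zeta_\Omega-\zeta_\D)''(s)=0$ for some $s\le-1$, then in particular the $k=1,2$ contribution vanishes, forcing $\lambda_1(\Omega)=1$, and then the equality case of Weinstock's inequality shows $\Omega$ is a round disk; conversely the disk gives $\zeta_\Omega=\zeta_\D$, so equality is trivial.
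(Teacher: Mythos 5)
Your reduction to the termwise series and the elementary properties of $F_s(x)=(\ln x)^2x^{-s}$ are fine, but the heart of the proof --- the estimate $F_s(\lambda_{2m-1}(\Omega))+F_s(\lambda_{2m}(\Omega))\ge 2F_s(m)$ for $m\ge 2$, or any workable substitute for it --- is missing, and the tools you propose cannot supply it. Since $F_s$ is \emph{increasing} on $[1,\infty)$ for $s\le-1$, you need \emph{lower} bounds on the eigenvalues of $\Omega$, whereas Hersch--Payne--Schiffer and Weinstock give \emph{upper} bounds (e.g.\ $\lambda_{2m-1}(\Omega)\lambda_{2m}(\Omega)\le m^2$ under the normalization \eqref{1.1}); these point in the wrong direction. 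The failure is not confined to pairs with an eigenvalue below $1$: whenever $\lambda_{2m-1}(\Omega)$ and $\lambda_{2m}(\Omega)$ both slip below $m$ (which HPS permits and which happens for generic perturbations of the disk), that pair contributes a \emph{negative} amount, and there is no a priori bound on how many pairs do so or on the total deficit obtainable from eigenvalue inequalities alone. Your proposed ``global balancing'' against the surplus from small eigenvalues, possibly aided by $\zeta_\Omega-\zeta_\D\ge0$, is exactly the step you leave undone, and I do not see how to complete it at the level of eigenvalues: the known positivity $\zeta_\Omega(s)\ge\zeta_\D(s)$ for $s\le-1$ is itself \emph{not} a consequence of termwise eigenvalue comparisons but of the operator inequality $\l(\Lambda_a+P_0)^{\sigma}\phi_n,\phi_n\r\ge|n|^{\sigma}$ for $\sigma\ge1$ (\cite[Lemmas 2.1 and 2.4]{JS2}), summed over the eigenbasis $(\phi_n)_{n\in\Z}$ of $|D_a|$. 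That is precisely the mechanism the paper uses here: writing $(\zeta_a-2\zeta_R)''(-\sigma)$ as the trace \eqref{M30} expanded in $(\phi_n)$, and proving each diagonal term $\l(\Lambda_a+P_0)^{\sigma}\ln^2(\Lambda_a+P_0)\phi_n,\phi_n\r\ge|n|^{\sigma}\ln(|n|)^2$ via Cauchy--Schwarz and Jensen's inequality for $f(x)=x\ln x$, i.e.\ the comparison is done on quadratic-form diagonals, not on eigenvalues, which is what makes every summand nonnegative.

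The equality discussion inherits the same gap: you conclude that the $k=1,2$ contribution must vanish, but that only follows if the series has been split into individually nonnegative pieces, which is the unproven step. (Your final appeal to Weinstock's equality case would be fine once that is in place; the paper instead gets equality to force $\l(\Lambda_a+P_0)^{\sigma}\phi_1,\phi_1\r=1$ and invokes \cite[Lemma 2.5]{JS2}.) A small side remark: the monotonicity of $F_s$ on $[1,\infty)$ holds for all $s\le0$, so that is not really where $s\le-1$ enters; in the paper's argument the hypothesis is used through the requirement $\sigma=-s\ge1$ in the JS2 estimate.
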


Convexity near $+\infty$ is also granted by Weinstock's inequality \cite{We} and we have the following result.

\begin{proposition}
\label{prop2}
Let $\Omega$ be a smooth simply connected bounded domain with $2\pi$ perimeter. Assume that $\Omega$ is not a round disk. Then there exists a positive real $s_\Omega$ so that for any $s\in [s_\Omega,+\infty)$ the inequality 
\begin{equation}
\label{M1c}
(\zeta_\Omega-\zeta_\D)''(s)>0
\end{equation}
holds.
\end{proposition}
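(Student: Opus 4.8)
The plan is to isolate, for real $s\to+\infty$, the dominant term of $\zeta_\Omega''(s)$ and that of $\zeta_\D''(s)$, and then to invoke Weinstock's inequality to place $\lambda_1(\Omega)$ strictly below $\lambda_1(\D)=1$.

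First I would note that, since $\lambda_k(\Omega)\sim k/2$ by \eqref{1.2}, the Dirichlet series $\sum_{k\ge1}\lambda_k(\Omega)^{-s}$ and its formal derivatives converge locally uniformly on $\{\Re s>1\}$; hence on the real half-line $(1,+\infty)$
$$
(\zeta_\Omega-\zeta_\D)''(s)=\sum_{k=1}^{\infty}\big(\ln\lambda_k(\Omega)\big)^2\lambda_k(\Omega)^{-s}-\sum_{k=1}^{\infty}\big(\ln\lambda_k(\D)\big)^2\lambda_k(\D)^{-s}.
$$
Every summand of the first series is nonnegative, so it is bounded below by its $k=1$ term $\big(\ln\lambda_1(\Omega)\big)^2\lambda_1(\Omega)^{-s}$. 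For the second series I would use the explicit spectrum of the disk: since $\lambda_1(\D)=\lambda_2(\D)=1$ the two leading terms vanish, so that
$$
\zeta_\D''(s)=2\sum_{n=2}^{\infty}(\ln n)^2 n^{-s}=2\cdot 2^{-s}\sum_{n=2}^{\infty}(\ln n)^2(n/2)^{-s}\le C\,2^{-s}\qquad(s\ge 2),
$$
where $C$ is an absolute constant; indeed for $s\ge 2$ the tail $\sum_{n\ge 3}(\ln n)^2(n/2)^{-s}$ is dominated by the convergent series $\sum_{n\ge 3}(\ln n)^2(n/2)^{-2}$.

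Next, because $\Omega$ is not a round disk, Weinstock's inequality \cite{We} under the normalization \eqref{1.1} gives $0<\lambda_1(\Omega)<\lambda_1(\D)=1$; in particular $\big(\ln\lambda_1(\Omega)\big)^2>0$ and $2/\lambda_1(\Omega)>2>1$. Combining the two estimates, for all $s\ge 2$,
$$
(\zeta_\Omega-\zeta_\D)''(s)\ge 2^{-s}\Big[\big(\ln\lambda_1(\Omega)\big)^2\big(2/\lambda_1(\Omega)\big)^{s}-C\Big],
$$
and since $\big(2/\lambda_1(\Omega)\big)^{s}\to+\infty$ as $s\to+\infty$, the bracket is positive for every $s$ beyond some threshold $s_\Omega\ge 2$, which is the asserted inequality \eqref{M1c} on $[s_\Omega,+\infty)$.

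I do not anticipate a genuine obstacle here: the substantive input is Weinstock's inequality together with its equality case, which is quoted, and the rest is an elementary comparison of two convergent Dirichlet series on a real ray. The single point deserving a word of care is the representation of $(\zeta_\Omega-\zeta_\D)''$ by the term-by-term second derivative of the eigenvalue series; this is the standard fact on differentiating a Dirichlet series inside its half-plane of convergence, and it is harmless since we are free to enlarge $s_\Omega$. If one wished, the same computation refines to the sharp asymptotics $(\zeta_\Omega-\zeta_\D)''(s)=m\,\big(\ln\lambda_1(\Omega)\big)^2\lambda_1(\Omega)^{-s}\big(1+o(1)\big)$ as $s\to+\infty$, with $m$ the multiplicity of $\lambda_1(\Omega)$, but the crude lower bound above already suffices.
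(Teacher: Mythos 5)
Your proof is correct and follows essentially the same route as the paper's (stated there for the analytic version, Proposition \ref{prop5}): Weinstock's inequality gives $\lambda_1(\Omega)<1$, and then the positive term $\big(\ln\lambda_1(\Omega)\big)^2\lambda_1(\Omega)^{-s}$ dominates as $s\to+\infty$ while the disk's contribution $2\sum_{n\ge 2}(\ln n)^2 n^{-s}$ tends to $0$. Your write-up merely makes explicit, via the $C\,2^{-s}$ bound, the step the paper dismisses as "the asymptotics makes obvious."
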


It is then questionable whether one can extend the statement to the whole real axis. We exhibit counterexamples in the following Proposition.

\begin{proposition}
\label{prop3}
There exist a smooth simply connected bounded planar domain
$\Omega$ of perimeter $2\pi$ and a real number $s\in (0,2)$ so that
$$
(\zeta_\Omega-\zeta_\D)''(s)< 0.
$$
\end{proposition}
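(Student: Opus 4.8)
The plan is to work perturbatively near the unit disk. Parametrize a family of domains $\Omega_t$ whose boundary is a small smooth perturbation of $\S$ of size $t$, chosen so that the perimeter remains $2\pi$; concretely one may take a boundary curve determined by a real trigonometric polynomial $\rho(\theta)=t\cos(m\theta)$ (or a finite combination of a few modes), rescaled to keep \eqref{1.1}, and with $t$ a small real parameter. For such families the difference $\zeta_{\Omega_t}-\zeta_\D$ vanishes to second order is \emph{not} true in general — rather, by the analyticity of the Steklov eigenvalues in $t$ and the results quoted from \cite{E,JS4}, the function $g_m(s,t):=(\zeta_{\Omega_t}-\zeta_\D)(s)$ is smooth in $t$, vanishes at $t=0$ together with its first $t$-derivative (since $\Omega_0=\D$ and the disk is a critical point by Theorem \ref{thm1}/\cite{JS4}), so its leading behaviour is $g_m(s,t)=\tfrac12 t^2 q_m(s)+O(t^3)$ for an explicit entire function $q_m(s)\ge 0$ on $\R$. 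Then
$$
(\zeta_{\Omega_t}-\zeta_\D)''(s)=\tfrac12 t^2\, q_m''(s)+O(t^3),
$$
so it suffices to exhibit one mode $m$ and one point $s_0\in(0,2)$ with $q_m''(s_0)<0$; the $O(t^3)$ term is then dominated for $t$ small enough.

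The key computational step is therefore to obtain a closed form (or at least a computable one) for $q_m(s)$. I would use the conformal/Hadamard-type perturbation formula for the Dirichlet-to-Neumann operator under a boundary deformation: writing the perturbed boundary via a conformal map from $\D$, the eigenvalues $\lambda_k(\Omega_t)$ have second-order Taylor expansions in $t$ whose coefficients are rational expressions in $k$ and $m$ (this is exactly the machinery behind the asymptotics \eqref{1.2} and behind \cite[Theorem 1.1]{JS4}). Summing $(\lambda_k(\Omega_t))^{-s}$ against these expansions and extracting the $t^2$ coefficient gives $q_m(s)$ as a finite sum of shifted zeta-type series, e.g. combinations of $\sum_k k^{-s}\,(\text{rational in }k,m)$, which reduce to finitely many values of $\zeta_R$ and elementary terms after partial fractions. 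Differentiating twice in $s$ pulls down factors of $(\ln k)^2$, $\ln k$, etc., and the resulting $q_m''(s)$ is an explicit meromorphic-in-$s$ but entire-near-$(0,2)$ expression.

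The main obstacle is controlling the sign: $q_m(s)\ge 0$ on $\R$ and $q_m''(s)\ge 0$ for $s\le -1$ (consistent with Proposition \ref{prop1}), so to violate \eqref{M1c} on $(0,2)$ one must identify a mode $m$ for which the competition between the positive contribution of the low eigenvalues and the sign-changing weight $(\ln k)^2 - 2\alpha\ln k+\dots$ in $q_m''$ tips negative somewhere in $(0,2)$. I expect the cleanest choice is a single high mode $m$ (large $m$) and a point $s_0$ near, say, $s_0=1$ or slightly below $2$, where the ``$\ln^2$'' enhancement is modest but the $-\zeta_\D$ subtraction (which removes the disk's own convex contribution) dominates; alternatively one can argue by a limiting/asymptotic estimate of $q_m''(s_0)$ as $m\to\infty$, showing it tends to a negative constant times a positive power of $m$. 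Making this sign determination rigorous — i.e. bounding the tail of the eigenvalue series and pinning down the sign of a specific finite combination of $\zeta_R$-values and logarithmic derivatives — is the real work; once a single numerical instance $(m,s_0)$ with $q_m''(s_0)<0$ is in hand, the perturbative conclusion and hence the Proposition follow immediately.
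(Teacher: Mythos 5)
Your overall strategy is the paper's: perturb the disk, use that the first variation of $\zeta$ at the disk vanishes, reduce to the sign of a second-variation quantity for a single high Fourier mode, and invoke an asymptotic analysis in the mode number. But the proposal stops exactly where the proof actually lives. You never produce $q_m(s)$, never establish $q_m''(s_0)<0$ for any concrete $(m,s_0)$, and you say yourself that "making this sign determination rigorous \dots is the real work." That sign determination is the entire content of Proposition \ref{prop3}; without it there is no proof. The paper fills this gap by quoting the explicit second-variation formula \eqref{3.55} from \cite[Proposition 3.8]{JS4}, choosing $\beta(\theta)=2\cos((2r+1)\theta)$ so that the double sum collapses to a single finite sum over $p+n=2r+1$, and then performing a Riemann-sum asymptotic analysis as $r\to\infty$: the leading term of $\partial_\tau^2\zeta_{\alpha_\tau}'(s)|_{\tau=0}$ is $(2r+1)^{2-s}\ln(2r+1)$ times the integral in \eqref{M55}, whose negativity for $s\in(0,2)$ is elementary ($x>1-x$ and $x^{-s}<(1-x)^{-s}$ on $(1/2,1)$). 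That computation, or an equivalent one, is what you would have to supply.

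A second, smaller gap: you aim directly at $(\zeta_{\Omega_t}-\zeta_\D)''(s)=\tfrac12 t^2 q_m''(s)+O(t^3)$, which requires knowing that the $O(t^3)$ remainder stays $O(t^3)$ after two $s$-differentiations (uniformly near $s_0$), and it also forces you to control the second $s$-derivative of the second variation, which is harder to sign than the first. The paper avoids both issues: it signs only the first $s$-derivative of the second variation (\eqref{M56}), Taylor-expands $\zeta_{\alpha_\tau}'(s)$ in $\tau$ at the fixed point $s$ using \eqref{M57} and the smoothness statement \eqref{3.2}, concluding $\zeta_{\alpha_\tau}'(s)<2\zeta_R'(s)$ for small $\tau>0$, and then uses the identity $\zeta_{\alpha_\tau}'(0)=2\zeta_R'(0)$ (from \cite{EW,JS4}) to deduce that $\zeta_{\alpha_\tau}-2\zeta_R$ cannot be convex on $(0,2)$, hence its second derivative is negative at some point of $(0,2)$. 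Note this only yields existence of such an $s$, not the prescribed $s_0$ your scheme targets; that weaker conclusion is all the statement requires, and it is substantially easier to reach. If you want to salvage your route, either carry out the explicit finite-sum computation of $q_m$ and its asymptotics, or switch to the paper's trick of signing $q_m'$ and exploiting the two identities $q_m(0)=0$, $q_m'(0)=0$ at $s=0$.
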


Now, we discuss an alternative approach to the same results which are of a more analytical character.

For a function
$b\in C^\infty({\mathbb S})$,
we write
$b(\theta)$
instead of
$b(e^{i\theta})$
and use the same letter
$b$
for the operator
$b:C^\infty({\mathbb S})\rightarrow C^\infty({\mathbb S})$
of multiplication by the function
$b$.

Given a positive function
$a\in C^\infty({\mathbb S})$,
the operator
$\Lambda_a=a^{1/2}\Lambda a^{1/2}$
has the non-negative discrete eigenvalue spectrum
$$
\mbox{\rm Sp}(\Lambda_a)=\{0=\lambda_0(a)<\lambda_1(a)\leq\lambda_2(a)\leq\dots\}
$$
which is called the {\it Steklov spectrum of the function}
$a$
(or of the operator
$\Lambda_a$).

Two kinds of the Steklov spectrum are related as follows. Given a smooth simply connected planar domain
$\Omega$,
choose a biholomorphism
$\Phi:{\mathbb D}\rightarrow\Omega$
and define the function
$0<a\in C^\infty({\mathbb S})$
by
$a(\theta)=|\Phi'(e^{i\theta})|^{-1}$.
Let
$\phi:{\mathbb S}\rightarrow\partial\Omega$
be the restriction of
$\Phi$
to
${\mathbb S}$.
Then
$\Lambda_a=a^{-1/2}\phi^*\Lambda_\Omega\,\phi^{*-1}a^{1/2}$
and
$\mbox{\rm Sp}(\Lambda_a)=\mbox{\rm Sp}(\Omega)$.
Two latter equalities make sense for an arbitrary positive function
$a\in C^\infty({\mathbb S})$
if we involve multi-sheet domains into our consideration. See \cite[Section 3]{JS} for details. Theorem \ref{thm1} is true for multi-sheet domains as well. The normalization condition \eqref{1.1} is written in terms of the function
$a$
as follows:
\begin{equation}
\frac{1}{2\pi}\int\limits_0^{2\pi}\frac{d\theta}{a(\theta)}=1.
                                     \label{1.4}
\end{equation}

The biholomorphism
$\Phi$
of the previous paragraph is defined up to a conformal transformation of the disk
$\D$,
this provides examples of functions with the same Steklov spectrum. Two functions
$a,b\in C^\infty({\mathbb S})$
are said to be {\it conformally equivalent}, if there exists a conformal or anticonformal transformation
$\Psi$
of the disk
${\mathbb D}$
such that
$b=|d\psi/d\theta|^{-1}a\circ\psi$,
where the function
$\psi(\theta)$
is defined by
$e^{i\psi(\theta)}=\Psi(e^{i\theta})$
($\Psi$
is anticonformal if
$\bar\Psi$
is conformal).
If two positive functions
$a,b\in C^\infty({\mathbb S})$
are conformally equivalent, then
$\mbox{Sp}(a)=\mbox{Sp}(b)$.

Under condition \eqref{1.4}, Steklov eigenvalues
$\lambda_k(a)$
have the same asymptotics \eqref{1.2}.
The {\it zeta function of}
$a$
is defined by
\begin{equation}
\zeta_a(s)=\mbox{\rm Tr}[\Lambda_a^{-s}]=\sum\limits_{k=1}^\infty\big(\lambda_k(a)\big)^{-s}
                                            \label{1.5}
\end{equation}
for
$\Re(s)>1$.
It again extends to a meromorphic function on
${\mathbb C}$
with the unique simple pole at
$s=1$
such that
$\zeta_a(s)-2\zeta_R(s)$
is an entire function. Here the Steklov zeta function $\zeta_{\mathbf 1}$ of the constant function $\mathbf 1$
(= the constant function identically equal to 1) is equal to $\zeta_\D(=2\zeta_R)$. 

The analytical versions of Theorem \ref{thm1}, Propositions \ref{prop1}, \ref{prop2} and \ref{prop3} sound as follows:

\begin{theorem}
\label{thm2}
For a positive function
$a\in C^\infty({\mathbb S})$
satisfying the normalization condition \eqref{1.4}, the inequality
\begin{equation}
(\zeta_a-2\zeta_R)''(0)\ge 0\label{M1}
\end{equation}
holds. Moreover equality in \eqref{M1} holds if and only if $a$ is conformally equivalent to the constant function $\mathbf 1$.
\end{theorem}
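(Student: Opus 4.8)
The plan is to read \eqref{M1} straight off the two facts recalled above for the geometric statement, and then to settle the equality case by reworking the deformation argument of \cite{JS4}.

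\emph{The inequality.} Put $f=\zeta_a-2\zeta_R$. As recalled above, $f$ extends to an entire function which is real on $\R$, and $f(0)=0$ (the analytic form of $\zeta_\Omega(0)=\zeta_\D(0)$); moreover the analytic version of \cite[Theorem~1.1]{JS4} gives $f(s)\ge 0$ for every $s\in\R$. Hence the smooth function $f|_{\R}$ has a global minimum at $s=0$, so $f'(0)=0$ and $f''(0)\ge 0$, which is \eqref{M1}. If $a$ is conformally equivalent to $\mathbf 1$ then $\zeta_a=2\zeta_R$ and equality is clear, so only the ``only if'' part is at issue.

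\emph{A variational reduction.} Assume $f''(0)=0$. I would join $a$ to $\mathbf 1$ by a smooth path $(a_t)_{t\in[0,1]}$ of positive functions in $C^\infty(\S)$ with $a_0=\mathbf 1$, $a_1=a$, each $a_t$ obeying \eqref{1.4} (for instance $a_t^{-1}=1+t(a^{-1}-1)$, which keeps $a_t>0$ and $\frac1{2\pi}\int_0^{2\pi}a_t^{-1}\,d\theta=1$). With $b_t=\partial_t\log a_t$ and $\Lambda_{a_t}=a_t^{1/2}\Lambda a_t^{1/2}$, the identity $\partial_t\Lambda_{a_t}=\tfrac12(b_t\Lambda_{a_t}+\Lambda_{a_t}b_t)$ together with cyclicity of the trace gives
\[
\frac{d}{dt}\,\zeta_{a_t}(s)=-s\,\T\!\big[b_t\,\Lambda_{a_t}^{-s}\big],
\]
which vanishes at $s=0$ in accordance with $\zeta_{a_t}(0)=\zeta_\D(0)$. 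Since $f$ and the $\zeta_{a_t}-2\zeta_R$ are entire, integrating $f(s)=\int_0^1\frac{d}{dt}\zeta_{a_t}(s)\,dt$ and expanding in $s$ at $0$ — using $\zeta_{a_t}(0)=\zeta_\D(0)$, which makes $s\mapsto\T[b_t\Lambda_{a_t}^{-s}]$ regular at $0$, together with $f'(0)=0$ — yields
\[
f''(0)=2\int_0^1\T\!\big[b_t\,\log\Lambda_{a_t}\big]\,dt ,
\]
the trace being the value at $s=0$ of the meromorphic function $s\mapsto\T[b_t\Lambda_{a_t}^{-s}\log\Lambda_{a_t}]$. Thus the hypothesis becomes $\int_0^1\T[b_t\log\Lambda_{a_t}]\,dt=0$, and applying \eqref{M1} to each $a_t$ shows in addition that every partial integral $\int_0^t\T[b_\tau\log\Lambda_{a_\tau}]\,d\tau=\tfrac12(\zeta_{a_t}-2\zeta_R)''(0)$ is $\ge 0$.

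\emph{Conclusion, and the expected obstacle.} To turn this into rigidity I would reproduce the deformation argument of \cite{JS4}, now keeping track of strictness: for a suitably chosen path one shows that the quantities along it that enter the reduction are non-negative and degenerate exactly along the conformal-equivalence class of $\mathbf 1$, so that the vanishing above propagates from $t=0$ and forces $a=a_1$ to be conformally equivalent to $\mathbf 1$; the converse being clear, this proves the theorem. I expect the real work — and the main obstacle — to be precisely this last input: establishing the correct non-negativity and rigidity for the second variation of the Steklov zeta function at $s=0$ along the deformation, which, as in \cite{JS4}, requires the delicate symbol and trace computations for $\Lambda_a$ and careful control of the $\zeta$-regularized traces $\T[b_t\log\Lambda_{a_t}]$ occurring at $s=0$; one also has to justify differentiation under the trace and the integral in view of the meromorphic continuation and of the (moving) kernel of $\Lambda_{a_t}$.
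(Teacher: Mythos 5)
Your first step is fine and coincides with the paper's: since $\zeta_a(0)=2\zeta_R(0)$ and $(\zeta_a-2\zeta_R)(s)\ge 0$ on $\R$ by \cite[Theorem 1.1]{JS4}, the point $s=0$ is a global minimum of an entire function that is real on $\R$, so $(\zeta_a-2\zeta_R)''(0)\ge 0$. The gap is in the equality case, and it is exactly the ingredient you flag as "the expected obstacle" and then leave unproved. Your reduction along the linear path $a_t^{-1}=1+t(a^{-1}-1)$ gives $f''(0)=2\int_0^1\T[b_t\log\Lambda_{a_t}]\,dt$ together with the information that the partial integrals $g(t)=\tfrac12(\zeta_{a_t}-2\zeta_R)''(0)$ are $\ge 0$ and $g(0)=g(1)=0$; but these facts alone force nothing (they only give $g'(1)\le 0$), because the integrand $\T[b_t\log\Lambda_{a_t}]$ has no sign for a generic path. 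Worse, for your path $\partial_t\Lambda_{a_t}$ is a first-order operator, so $b_t\log\Lambda_{a_t}$ is not trace class and its "trace" is only a $\zeta$-regularized value; there is no termwise positivity structure to exploit, and the concluding claim that "the vanishing propagates from $t=0$" is not derived from anything you established.

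The paper closes this gap with two specific inputs that your sketch does not supply. First, it does not use an arbitrary interpolation but the flow $\partial_\tau\alpha_\tau=-\alpha_\tau\Lambda\alpha_\tau+\H\alpha_\tau D\alpha_\tau$ of \cite[Theorem 1.3]{JS4}, with $\alpha_0=a$ and $\alpha_\tau\to\mathbf 1$; its whole point is that $\partial_\tau\zeta_{\alpha_\tau}(s)=s\,\T\big((\Lambda_{\alpha_\tau}+P_{0,\tau})^{-s-1}(\Lambda_{\alpha_\tau}^2-D_{\alpha_\tau}^2)\big)$, where $\Lambda_{\alpha_\tau}^2-D_{\alpha_\tau}^2$ is smoothing, so after two $s$-derivatives one gets at $s=0$ an honest trace, $\partial_\tau\zeta''_{\alpha_\tau}(0)=-2\,\T\big(\ln(\Lambda+P_0)(\Lambda+P_0)^{-1}(\Lambda^2-D^2)\big)$. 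Second, the key rigidity statement is Lemma \ref{lem_snd_der}: this trace is $\ge 0$, with equality if and only if the function is conformally equivalent to $\mathbf 1$; its proof expands the trace in the basis $(\phi_n)$ and uses the convexity inequality \eqref{2.6}, the Hilbert transform identity $D_a=\Lambda_a a^{-1/2}\H a^{1/2}$, and the lower bounds $\l\Lambda_a\phi_n,\phi_n\r\ge n$ from \cite{JS2}. With these, $\zeta''_{\alpha_\tau}(0)$ is nonincreasing in $\tau$, tends to $2\zeta_R''(0)$, and is $\ge 2\zeta_R''(0)$ by the inequality part, hence is constant when equality holds at $\tau=0$; Lemma \ref{lem_snd_der} then forces each $\alpha_\tau$, in particular $a$, to be conformally equivalent to $\mathbf 1$. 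Until you prove an analogue of this sign-plus-rigidity statement for the second variation along your chosen path (or switch to the paper's flow, where the smoothing structure makes it provable), your argument for the "only if" direction is incomplete.
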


\begin{proposition}
\label{prop4}
Let $s\in (-\infty,-1]$.  For a positive function
$a\in C^\infty({\mathbb S})$
satisfying the normalization condition \eqref{1.4}, the inequality
\begin{equation}
(\zeta_a-2\zeta_R)''(s)\ge 0\label{M1b}
\end{equation}
holds. Moreover equality in \eqref{M1b} holds if and only if $a$ is conformally equivalent to the constant function $1$.
\end{proposition}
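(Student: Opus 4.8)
The plan is to differentiate, term by term, the representation
$$(\zeta_a-2\zeta_R)(s)=\sum_{k\ge1}\bigl(\lambda_k(a)^{-s}-\lambda_k(\mathbf 1)^{-s}\bigr),$$
which is legitimate because $\lambda_k(a)-\lambda_k(\mathbf 1)=O(k^{-\infty})$ makes this series, and each of its $s$-derivatives, converge locally uniformly on $\C$. Hence for every $s$
$$(\zeta_a-2\zeta_R)''(s)=\sum_{k\ge1}\Bigl[(\ln\lambda_k(a))^2\lambda_k(a)^{-s}-(\ln\lambda_k(\mathbf 1))^2\lambda_k(\mathbf 1)^{-s}\Bigr].$$
Since the Steklov spectrum of $\mathbf 1$ consists of the integers $n\ge1$, each with multiplicity two, $\lambda_{2n-1}(\mathbf 1)=\lambda_{2n}(\mathbf 1)=n$, I regroup the series in consecutive pairs, $(\zeta_a-2\zeta_R)''(s)=\sum_{n\ge1}S_n(s)$ with
$$S_n(s):=(\ln\lambda_{2n-1}(a))^2\lambda_{2n-1}(a)^{-s}+(\ln\lambda_{2n}(a))^2\lambda_{2n}(a)^{-s}-2(\ln n)^2 n^{-s}.$$

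The pair $n=1$ costs nothing: because $\ln 1=0$, $S_1(s)=(\ln\lambda_1(a))^2\lambda_1(a)^{-s}+(\ln\lambda_2(a))^2\lambda_2(a)^{-s}\ge 0$, with equality exactly when $\lambda_1(a)=\lambda_2(a)=1$. The task is therefore to show $S_n(s)\ge 0$ for $n\ge 2$ when $s\le-1$. Fix such an $s$ and put $r=-s\ge1$; then $S_n(s)=g(\lambda_{2n-1}(a))+g(\lambda_{2n}(a))-2g(n)$ with $g(\lambda):=(\ln\lambda)^2\lambda^{r}$. One computes $g'(\lambda)=\lambda^{r-1}\ln\lambda\,(2+r\ln\lambda)$ and $g''(\lambda)=\lambda^{r-2}\bigl[(r-1)\ln\lambda\,(2+r\ln\lambda)+2+2r\ln\lambda\bigr]$, so $g$ is increasing and convex on $[1,\infty)$; consequently $g(x)+g(y)-2g(n)\ge g'(n)(x+y-2n)\ge0$ whenever $x,y\ge1$ and $x+y\ge2n$. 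Thus $S_n(s)\ge0$ for all $n\ge2$ would follow from an elementary a priori estimate asserting $\lambda_{2n-1}(a)\ge1$ and $\lambda_{2n-1}(a)+\lambda_{2n}(a)\ge2n$ for $n\ge2$.

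Establishing the required control on the pairs $(\lambda_{2n-1}(a),\lambda_{2n}(a))$ is the substance of the argument and the step I expect to be the main obstacle. It is a lower bound on consecutive Steklov eigenvalues, running opposite to the upper bounds of Weinstock and of Hersch--Payne--Schiffer, and it must genuinely distinguish $n=1$ (where $\lambda_1(a)+\lambda_2(a)\le2$) from $n\ge2$; for ``wild'' $a$ some eigenvalues can be far smaller than their disk counterparts, so the region $\lambda<1$ where $g$ fails to be convex must be treated separately, using there $0\le g(\lambda)\le 4/(r^2e^2)$ together with the global inequality $(\zeta_a-2\zeta_R)(\sigma)\ge0$ from \cite{JS4} (e.g. in the form of sign information on the Mellin moments $\int_0^\infty t^{\sigma}D(t)\,dt$ of the compactly supported difference $D(t)=\#\{k:\lambda_k(\mathbf 1)\le t\}-\#\{k:\lambda_k(a)\le t\}$ of spectral counting functions). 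The asymptotics $\lambda_k(a)=\lambda_k(\mathbf 1)+O(k^{-\infty})$ confine the genuine difficulty to finitely many $n$.

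For the rigidity statement: if $(\zeta_a-2\zeta_R)''(s_0)=0$ for some $s_0\le-1$, then, all the $S_n(s_0)$ being nonnegative with zero sum, each vanishes; in particular $S_1(s_0)=0$, forcing $\lambda_1(a)=1$, whence by Weinstock's inequality and its equality case \cite{We} the function $a$ is conformally equivalent to $\mathbf 1$. The converse is immediate, since then $\zeta_a=2\zeta_R$.
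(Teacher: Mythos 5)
Your reduction is clean as far as it goes: term-by-term differentiation is justified by the $O(k^{-\infty})$ asymptotics, the pair $n=1$ is free because $\ln 1=0$, and the convexity computation for $g(\lambda)=(\ln\lambda)^2\lambda^{r}$ on $[1,\infty)$ is correct. But the whole weight of the proof is then carried by the pairwise lower bounds $\lambda_{2n-1}(a)\ge 1$ and $\lambda_{2n-1}(a)+\lambda_{2n}(a)\ge 2n$ for $n\ge 2$, which you do not prove — and which are in fact false in general. For a thin smooth domain of perimeter $2\pi$ (a flattened stadium collapsing onto a segment of length $\pi$), the symmetric modes give eigenvalues close to $k\tanh(k\epsilon)\approx k^2\epsilon$, so arbitrarily many low eigenvalues, in particular $\lambda_3(a)$, drop below $1$; only product-type upper bounds (Hersch--Payne--Schiffer) and integrated comparisons survive degeneration, never pairwise domination of the disk spectrum. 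Your fallback for the region $\lambda<1$ — the bound $0\le g\le 4/(r^2e^2)$ combined with sign information extracted from $(\zeta_a-2\zeta_R)(\sigma)\ge 0$ via Mellin moments of the counting-function difference — is only a sketch, and it is not clear it can work: nonnegativity of the zeta difference does not by itself control the size of the defect $2(\ln n)^2n^{r}-g(\lambda_{2n-1})-g(\lambda_{2n})$, which grows polynomially in $n$ for the degenerating examples above. So there is a genuine gap at the central step; the equality case (via $S_1=0$, Weinstock) is fine but only conditional on it.

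The paper avoids eigenvalue-by-eigenvalue comparison altogether. It writes $(\zeta_a-2\zeta_R)''(-s)$, $s\ge 1$, as a trace in the explicit basis $(\phi_n)_{n\in\Z}$ of eigenfunctions of $|D_a|$, namely as $\sum_{n\in\Z\b\{0\}}\big(\l(\Lambda_a+P_0)^{s}\ln^2(\Lambda_a+P_0)\phi_n,\phi_n\r-|n|^{s}\ln(|n|)^2\big)$, and shows each summand is nonnegative using Cauchy--Bunyakovsky--Schwarz, the Jensen-type inequality \eqref{2.6} for the convex function $x\ln x$, and the quadratic-form estimate $\l(\Lambda_a+P_0)^{s}\phi_n,\phi_n\r\ge|n|^{s}$ from \cite{JS2}; equality forces $\l(\Lambda_a+P_0)^{s}\phi_1,\phi_1\r=1$, and \cite[Lemma~2.5]{JS2} then gives conformal equivalence to $\mathbf 1$. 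If you want to salvage your outline, you would need to replace your pairwise spectral bounds by such matrix-element (quadratic-form) estimates, since those are the only disk comparisons available in this generality.
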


\begin{proposition}
\label{prop5}
Let $a$ be a positive function $a\in C^\infty({\mathbb S})$ satisfying the normalization condition \eqref{1.4}. Assume that $a$ is not conformally equivalent to $\mathbf 1$. Then there exists a positive real $s_a$ so that for any $s\in [s_a,+\infty)$ the inequality 
\begin{equation}
\label{M1d}
(\zeta_a-2\zeta_R)''(s)> 0
\end{equation}
holds.
\end{proposition}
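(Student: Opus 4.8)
\medskip
\noindent\textbf{Proof proposal for Proposition \ref{prop5}.}
The plan is to establish the stronger statement that $(\zeta_a-2\zeta_R)''(s)\to+\infty$ as $s\to+\infty$, which at once produces a threshold $s_a$. The mechanism is elementary: for large real $s$ the series defining $\zeta_a''(s)$ is dominated by the term of the smallest Steklov eigenvalue $\lambda_1(a)$, whereas in $2\zeta_R''(s)=2\sum_{n\ge1}n^{-s}(\ln n)^2$ the term of the eigenvalue $1$ carries the weight $(\ln 1)^2=0$ and all the remaining terms decay exponentially. The feature that makes the domination effective is the \emph{strict} form of Weinstock's inequality \cite{We} (in its version for the operators $\Lambda_a$): under the normalization \eqref{1.4}, if $a$ is not conformally equivalent to $\mathbf 1$ then $0<\lambda_1(a)<1$, so that $\lambda_1(a)^{-s}$ grows exponentially while being multiplied by the strictly positive constant $\big(\ln\lambda_1(a)\big)^2$.

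In detail, I would argue as follows. For $\Re(s)>1$ the series \eqref{1.5} converges locally uniformly (by the asymptotics \eqref{1.2}, which in particular give $\lambda_k(a)\to\infty$ at a linear rate in $k$), hence may be differentiated termwise, yielding, for real $s>1$,
\begin{equation}
(\zeta_a-2\zeta_R)''(s)=\sum_{k=1}^\infty\lambda_k(a)^{-s}\big(\ln\lambda_k(a)\big)^2-2\sum_{n=1}^\infty n^{-s}(\ln n)^2.\label{eqplan1}
\end{equation}
First I would note that the subtracted sum tends to $0$ as $s\to+\infty$: its $n=1$ term vanishes identically, and for $s\ge3$ one has $n^{-s}(\ln n)^2\le 2^{-(s-3)}\,n^{-2}\sup_{m\ge2}\big(m^{-1}(\ln m)^2\big)$, so $2\sum_{n\ge1}n^{-s}(\ln n)^2=O(2^{-s})$. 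Next, by Weinstock's inequality together with the description of its equality case, the standing hypothesis forces $0<\lambda_1(a)<1$, hence $\big(\ln\lambda_1(a)\big)^2>0$ and $\lambda_1(a)^{-s}=e^{s\ln(1/\lambda_1(a))}\to+\infty$. Keeping only the nonnegative $k=1$ term in the first sum of \eqref{eqplan1} gives
\begin{equation}
(\zeta_a-2\zeta_R)''(s)\ge\lambda_1(a)^{-s}\big(\ln\lambda_1(a)\big)^2-2\sum_{n=1}^\infty n^{-s}(\ln n)^2\xrightarrow[s\to+\infty]{}+\infty,\label{eqplan2}
\end{equation}
and it suffices to let $s_a$ be any value beyond which the right-hand side of \eqref{eqplan2} is positive.

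I do not foresee a genuine obstacle. The single nonelementary ingredient is the rigidity in Weinstock's inequality --- equality only for the round disk, equivalently only when $a$ is conformally equivalent to $\mathbf 1$ --- which is exactly the tool alluded to in the sentence preceding the statement and is the analytic counterpart of what would be used to prove Proposition \ref{prop2}. The remaining points require only routine care: justifying termwise differentiation of \eqref{1.5} on half-lines $[1+\delta,+\infty)$ by means of \eqref{1.2}, and the decay estimate for $2\zeta_R''(s)$ recorded above.
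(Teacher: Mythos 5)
Your proposal is correct and follows essentially the same route as the paper: invoke the strict form of Weinstock's inequality to get $0<\lambda_1(a)<1$ when $a$ is not conformally equivalent to $\mathbf 1$, expand $(\zeta_a-2\zeta_R)''(s)$ termwise for real $s>1$, and observe that the exponentially growing term $\lambda_1(a)^{-s}\big(\ln\lambda_1(a)\big)^2$ dominates while $2\zeta_R''(s)\to0$. Your explicit lower bound obtained by discarding the nonnegative terms $k\ge2$ and the decay estimate for $2\zeta_R''$ merely make precise the asymptotic domination the paper states more briefly.
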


\begin{proposition}
\label{prop6}
Let $\U$ be an open neighborhood of $1$ in $C^\infty(\S)$.
There exist a smooth positive function $a\in \U$ satisfying the normalization condition \eqref{1.4} and a real number $s\in (0,2)$ so that
$$
(\zeta_a-2\zeta_R)''(s)< 0.
$$
\end{proposition}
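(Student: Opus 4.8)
The strategy is to construct the counterexample explicitly by perturbing the constant function $\mathbf 1$ and computing the second variation of $(\zeta_a-2\zeta_R)''(s)$ at $s$ in the critical range $(0,2)$. I would write $a_\ep = \mathbf 1 + \ep b + O(\ep^2)$ for a suitable trigonometric polynomial $b$ on $\S$ chosen so that the normalization \eqref{1.4} holds to the required order (this forces the mean of $b/\mathbf 1 = b$, together with a second-order correction, to vanish; concretely one takes $b(\theta)=\beta\cos(n\theta)$ for an integer $n\ge 2$ and adjusts the $\ep^2$ term of $a_\ep$ so that $\frac1{2\pi}\int_0^{2\pi} a_\ep^{-1}\,d\theta=1$). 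Since $a_\ep\to\mathbf 1$ in $C^\infty(\S)$ as $\ep\to 0$, for $\ep$ small enough $a_\ep\in\U$, so it suffices to show that for some fixed $s\in(0,2)$ one has $(\zeta_{a_\ep}-2\zeta_R)''(s)<0$ for all small nonzero $\ep$.

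The main computational step is to expand $\zeta_{a_\ep}(s)$ in powers of $\ep$. Because $\zeta_{a_\ep}-2\zeta_R$ is entire and $\zeta_{\mathbf 1}=2\zeta_R$, the first-order term in $\ep$ must vanish (it is linear in $b$ and, after the conformal normalization is taken into account, the linear term of $\zeta_a$ around $\mathbf 1$ is known from \cite{E,JS} to vanish identically — this is the fact underlying $\zeta_a\ge 2\zeta_R$ having a minimum at $\mathbf 1$). Hence
\[
\zeta_{a_\ep}(s)-2\zeta_R(s)=\ep^2\, Q_s(b)+O(\ep^3),
\]
where $Q_s(b)$ is an explicit quadratic form in the Fourier coefficients of $b$. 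For $b=\beta\cos(n\theta)$ this reduces to a single number $q_n(s)\beta^2$, and I would exhibit $q_n(s)$ as an explicit elementary function of $n$ and $s$ (a finite combination of terms like $n^{-s}$, $(2n)^{-s}$ type contributions coming from second-order perturbation theory for the eigenvalues of $\Lambda_{a_\ep}=a_\ep^{1/2}\Lambda a_\ep^{1/2}$, together with the Riemann-zeta type sums produced by the shift in all the $\lambda_k$). The claim then becomes: there exist $n\ge 2$ and $s\in(0,2)$ with $q_n''(s)<0$, where the derivative is in $s$. Once $q_n(s)$ is written down this is a finite check: $q_n(s)$ contains a term proportional to $(\ln n)^2 n^{-s}$ with a negative coefficient which dominates for $n$ large and $s$ in a fixed compact subinterval of $(0,2)$, while the remaining terms are controlled.

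The key steps in order: (1) set up the one-parameter family $a_\ep$ respecting \eqref{1.4}; (2) apply second-order Rayleigh–Schrödinger perturbation theory to the eigenvalues $\lambda_k(a_\ep)$ of $a_\ep^{1/2}\Lambda a_\ep^{1/2}$, using that $\Lambda$ is diagonal on $e^{ik\theta}$ with eigenvalue $|k|$, to get $\lambda_k(a_\ep)=|k| + \ep\,\mu_k^{(1)} + \ep^2\mu_k^{(2)}+\cdots$ with explicit $\mu_k^{(j)}$ depending only on the few nonzero Fourier modes of $b$; (3) substitute into $\sum_k \lambda_k(a_\ep)^{-s}$, expand, and justify term-by-term differentiation in $s$ and summation in $k$ (legitimate since the perturbation only moves finitely many modes at first order and the tail is $O(k^{-\infty})$-controlled by \eqref{1.2}); (4) read off $q_n(s)$ and its second $s$-derivative; (5) choose $n$ and $s$ making $q_n''(s)<0$, then fix $\ep$ small. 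I expect the main obstacle to be step (2)–(3): getting the second-order eigenvalue correction $\mu_k^{(2)}$ right, in particular the contribution from the normalization constraint and from the "near-diagonal" resonances $|k|=|k'|$ in the disk spectrum (the pairs $\pm k$), and then correctly identifying which combination of these produces the negative $(\ln n)^2 n^{-s}$ coefficient. A cleaner alternative, which I would pursue in parallel, is to avoid re-deriving the perturbation expansion from scratch and instead invoke the explicit second-variation formula for $\zeta_a$ at $\mathbf 1$ that presumably appears elsewhere in the paper (in the "other computations" alluded to in the abstract), specializing it to a single Fourier mode and then differentiating twice in $s$.
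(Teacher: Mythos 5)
Your overall strategy (perturb $\mathbf 1$ by a single Fourier mode, use that the first variation of $\zeta_a$ at $\mathbf 1$ vanishes, write $\zeta_{a_\ep}-2\zeta_R=\ep^2Q_s(b)+O(\ep^3)$, and try to make the second $s$-derivative of the quadratic term negative) is the same general setup as the paper, which indeed invokes the explicit second-variation formula of \cite[Proposition 3.8]{JS4} rather than redoing Rayleigh--Schr\"odinger theory eigenvalue by eigenvalue. But the decisive step of your plan --- step (5), "choose $n$ and $s$ making $q_n''(s)<0$", justified by the claim that $q_n(s)$ contains a dominant term $\propto(\ln n)^2n^{-s}$ with a \emph{negative} coefficient for $n$ large --- is wrong in sign. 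For $\beta=2\cos(N\theta)$ the second variation is
$$
V_N(z)=4z\sum_{\substack{n,p>0,\ n\neq p\\ n+p=N}}\frac{n^{-z}-p^{-z}}{p^2-n^2}\,pn\;(+\,2z^2(N/2)^{-z}\ \text{if $N$ even}),
$$
and each summand is positive for $z>0$, with $V_N(s)\sim 4s\,c_0(s)N^{1-s}$, $c_0(s)>0$. Differentiating twice in $z$ brings down $\ln^2$ factors whose top-order contribution is $(\ln N)^2$ times the \emph{same positive} sum, so $V_N''(s)\sim 4s\,c_0(s)N^{1-s}\ln^2N>0$ as $N\to\infty$ for fixed $s\in(0,2)$. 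Thus the large-frequency limit you propose produces a strictly positive second $s$-derivative of the quadratic form, and your "finite check" fails as designed.

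The paper gets around exactly this obstruction by working one derivative lower: for the odd mode $N=2r+1$ it computes $\partial_\tau^2\,\zeta_{\alpha_\tau}'(s)\big|_{\tau=0}$, whose leading term as $r\to\infty$ is $\propto N^{1-s}\ln N\cdot s\int_{1/2}^1\frac{x(1-x)}{2x-1}\bigl(x^{-s}-(1-x)^{-s}\bigr)dx<0$ for $s\in(0,2)$; hence, for $\tau$ small (so $\alpha_\tau\in\U$), $\zeta_{\alpha_\tau}'(s)<2\zeta_R'(s)$. Combining this with the identity $\zeta_a'(0)=2\zeta_R'(0)$ (Edward--Wu, valid for every normalized $a$) shows that $\zeta_{\alpha_\tau}-2\zeta_R$ cannot be convex on $(0,s)$: otherwise its derivative, vanishing at $0$, would be nonnegative at $s$. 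This yields the required point of negative second derivative in $(0,2)$ without ever controlling the sign of the second $s$-derivative of the quadratic form. Your proposal is missing this mean-value/monotonicity step, and its substitute (the negative $(\ln n)^2$ coefficient) is false. A direct version of your plan is not hopeless --- for instance a small \emph{even} mode such as $N=4$ gives $V_4(z)=3z(1-3^{-z})+2z^22^{-z}$, whose second derivative is negative for $s$ near $1.5$--$1.9$ because of the resonant term $2z^22^{-z}$ --- but that is a different mechanism from the one you rely on, and you would still need to justify differentiating the $O(\ep^3)$ remainder twice in $s$ (the paper's smoothness statement \eqref{3.2} in the space of entire functions does this).
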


Actually Proposition \ref{prop6} is a strengthened version of Proposition \ref{prop3}.

The paper is organized as follows. We prove Propositions \ref{prop4} and \ref{prop5} in Section 2. We prove Theorem \ref{thm2} in Section 3.
We prove Proposition \ref{prop6} in Section 4.
The last Sections 5, 6  and 7 are apart from the convexity questions. We expand the quantities
$\l\ln(\Lambda_a+P_0)\phi_n,\phi_n\r$ in a $C^\infty$ neighborhood of the  constant function $\mathbf 1$. Here
the Hilbert space $L^2(\S)$ is considered with the scalar product
$$
\l u, v\r=\int_{\S}u(\theta)\overline{v(\theta)}d\theta,
$$
and $(\phi_n)_{n\in\Z}$ is the orthonormal basis defined by
\begin{equation}
\phi_n(\theta)={1\over \sqrt{2\pi a(\theta)}}e^{in\int_0^\theta a^{-1}(s)ds},\ \theta\in [0,2\pi),\  n\in \Z,\label{2.1}
\end{equation}
and $P_0$ is the orthogonal projection onto the kernel of $\Lambda_a$.
In particular we prove that the identity $\l\ln(\Lambda_a+P_0)\phi_n,\phi_n\r=\ln(|n|)$, $n\not=0$, does not hold in general, which was the impetus for the deformation argument that leads to \cite[Theorem 1.1]{JS4}, see the concluding remarks given in \cite[Section 7]{JS4}.

\section{Strict convexity on $(-\infty,-1]$ and near $+\infty$: Proof of Propositions \ref{prop4} and \ref{prop5}}

In this Section we first recall some notations and properties and we then prove Propositions \ref{prop4} and \ref{prop5}.

\subsection{Notations, powers and logarithm of operators}
We use the derivative
$$
D=-i\frac{d}{d\theta}:C^\infty({\mathbb S})\rightarrow C^\infty({\mathbb S}).
$$
Operators $D$ and $\Lambda$ have the same one-dimensional null-space consisting of constant functions. 

We define the first order differential operator  $D_a:C^\infty({\mathbb S})\rightarrow C^\infty({\mathbb S})$ by
$$
D_a=a^{1/2}Da^{1/2}.
$$
The orthonormal basis $(\phi_n)_{n\in \Z}$ defined by \eqref{2.1} is an eigenbasis for $D_a$:
\begin{equation}
D_a\phi_n=n\phi_n,\ n\in \Z.\label{2.5}
\end{equation}
We will denote $|D_a|=(D_a^2)^{1\over 2}$. And we denote $P_0$ the orthogonal projection of $L^2(\S)$ onto the one-dimensional space spanned by the function $\phi_0$.

Let $f$ be a function from $(0,+\infty)$ to $\R$ with at most a polynomial growth at $+\infty$: $|f(x)|=O(x^N)$ as $x\to+\infty$ for some integer $N$.
Let $A$  be a positive pseudodifferential operator of order one with a discrete eigenvalue spectrum:
If $\{\psi_k\}_{k\in \N}$ is an orthonormal basis of $L^2(\S)$
consisting of eigenvectors of $A$ with associated eigenvalues $\lambda_k>0$, then
$$
f(A)u=\sum_{k\in\N}f(\lambda_k)\l u,\psi_k \r\psi_k\ \mbox{for}\ u\in C^\infty(\S).
$$
The operator $f(A): C^\infty(\S)\to C^\infty(\S)$ defines a (possibly unbounded) selfadjoint operator in $L^2(\S)$. 
In this paper we consider only the case when $A=\Lambda_a+P_0$ or $A=|D_a|+P_0$ and $f(x)=x^s\ln^m(x)$ for $s\in \R$ and $m=0,1,2$. 

For instance equality \eqref{2.5} implies
\begin{equation}
f(|D_a|+P_0)\phi_n=f\big(\max(|n|,1)\big)\phi_n\quad n\in\Z.
                         \label{2.8}
\end{equation}

When $f$ is convex then we recall that 
\begin{equation}
\l f(A)u,v\r\ge f(\l A u,v\r) \label{2.6}
\end{equation}
for $(u,v)\in C^\infty(\S)^2$ so that $\l u,v\r=1$ and $\l u, \psi_k\r\l\psi_k,v\r\ge 0$ for every $k\in \N$
(see for instance the proof of \cite[Lemma 5.2]{JS4}).

Let $s\in\R$ and $m\in \N$. The difference 
$$
(\Lambda_a+P_0)^{-s}\ln^m(\Lambda_a+P_0)-(|D_a|+P_0)^{-s}\ln^m(|D_a|+P_0)
$$ 
is a smoothing operator and 
\begin{equation}
(-1)^m{d^m (\zeta_a-2\zeta_R)\over ds^m}(s)=\T\big[(\Lambda_a+P_0)^{-s}\ln^m(\Lambda_a+P_0)-(|D_a|+P_0)^{-s}\ln^m(|D_a|+P_0)\big],\label{2.7}
\end{equation}
see \cite[Lemmas 3.4 and 3.5]{JS4} where the operator ``$H(\tau,z)$" is taken at $\tau=0$ and $z=s$.

\subsection{Proof of Proposition \ref{prop4}}
First we use \eqref{2.7} when $m=2$:
$$
(\zeta_a''-2\zeta_R'')(-s)=\T((\Lambda_a+P_0)^s\ln(\Lambda_a+P_0)^2-(|D_a|+P_0)^s\ln(|D_a|+P_0)^2)),
$$
and we expand the trace with respect to the basis $(\phi_n)_{n\in \Z}$ 
\begin{equation}
(\zeta_a-2\zeta_R)''(-s)=\sum_{n\in\Z\b\{0\}}\Big(\l(\Lambda_a+P_0)^s\ln^2(\Lambda_a+P_0)\phi_n,\phi_n\r-|n|^s\ln(|n|)^2\Big).\label{M30}
\end{equation}

Let $n\in \Z\b\{0\}$. By Cauchy-Bunyakovsky-Schwarz inequality we have
\begin{eqnarray}
&&\big(\l(\Lambda_a+P_0)^s\ln(\Lambda_a+P_0)\phi_n,\phi_n\r\big)^2=\big(\l(\Lambda_a+P_0)^{s\over 2}\ln(\Lambda_a+P_0)\phi_n,(\Lambda_a+P_0)^{s\over 2}\phi_n\r\big)^2\nonumber\\
&\le&\l(\Lambda_a+P_0)^s\phi_n,\phi_n\r\l(\Lambda_a+P_0)^s\ln^2(\Lambda_a+P_0)\phi_n,\phi_n\r\label{2.9}
\end{eqnarray}
for $s\in \R$. Now set $s\ge 1$. 
We recall the estimate \cite[Lemmas 2.1 and 2.4]{JS2}
\begin{equation}
\l(\Lambda_a+P_0)^s\phi_n,\phi_n\r\ge |n|^s\ge 1.\ \label{2.10}
\end{equation}
We divide both sides of the inequality \eqref{2.9} by $\l(\Lambda_a+P_0)^s\phi_n,\phi_n\r$ and we obtain
\begin{eqnarray}
\l(\Lambda_a+P_0)^s\ln^2(\Lambda_a+P_0)\phi_n,\phi_n\r
&\ge& {\big(\l(\Lambda_a+P_0)^s\ln(\Lambda_a+P_0)\phi_n,\phi_n\r\big)^2\over \l(\Lambda_a+P_0)^s\phi_n,\phi_n\r}\nonumber\\
&=&{1\over s^2}{\big(\l f(\Lambda_a+P_0)^s)\phi_n,\phi_n\r\big)^2\over \l(\Lambda_a+P_0)^s\phi_n,\phi_n\r}\label{2.11}
\end{eqnarray}
where $f$ is the convex function $f(x)=x\ln(x)$, $x>0$. 
We used the identity $\ln(\Lambda_a+P_0)=s^{-1}\ln((\Lambda_a+P_0)^s)$. Then we use \eqref{2.6}: 
\begin{equation}
\l f\big((\Lambda_a+P_0)^s\big)\phi_n,\phi_n\r\ge f(\l(\Lambda_a+P_0)^s\phi_n,\phi_n\r)\ge 0.\label{2.12}
\end{equation}
The nonnegativity in \eqref{2.12} follows from \eqref{2.10}. Then we combine \eqref{2.11} and \eqref{2.12}
and we obtain
\begin{eqnarray}
\l(\Lambda_a+P_0)^s\ln^2(\Lambda_a+P_0)\phi_n,\phi_n\r
&\ge&{1\over s^2}{f\big(\l(\Lambda_a+P_0)^s\phi_n,\phi_n\r)\big)^2\over \l(\Lambda_a+P_0)^s\phi_n,\phi_n\r}\nonumber\\
&=&{1\over s^2}\l(\Lambda_a+P_0)^s\phi_n,\phi_n\r\ln \Big(\l(\Lambda_a+P_0)^s\phi_n,\phi_n\r\Big)^2\nonumber\\
&\ge &|n|^s\ln(|n|)^2.\label{M31}
\end{eqnarray}
We used \eqref{2.10} at the last line.

Inequality \eqref{M1b} follows from \eqref{M30} and \eqref{M31}.
Equality in \eqref{M1b} implies that each summand in \eqref{M30} is zero:
$\l(\Lambda_a+P_0)^s\ln^2(\Lambda_a+P_0)\phi_n,\phi_n\r=|n|^s\ln(|n|)^2$ for $n\in \Z\b\{0\}$. In particular it implies equalities in \eqref{M31}. Therefore $\l(\Lambda_a+P_0)^s\phi_n,\phi_n\r=|n|^s$ for $n\in \Z\b\{0\}$. The identity for $n=1$ is enough to conclude that $a$ is conformally equivalent to $\mathbf 1$ \cite[Lemma 2.5]{JS2}.\hfill $\Box$

\subsection{Proof of Proposition \ref{prop5}}
Assume that $a$ is not conformally equivalent to the constant function $\mathbf 1$.
Weinstock's inequality \cite{We} tells us that
$$
\lambda_1(a)<1.
$$
And by definition
$$
\zeta_a(s)-2\zeta_R(s)=\sum_{k=1}^\infty\Big[\lambda_k(a)^{-s}-\big(\lfloor{k+1\over 2}\rfloor\big)^{-s}\Big],
$$
\begin{eqnarray*}
\zeta_a''(s)-2\zeta_R''(s)&=&\lambda_1(a)^{-s}\ln(\lambda_1(a))^2+\lambda_2(a)^{-s}\ln(\lambda_2(a))^2\\
&&+\sum_{k=3}^\infty\Big[\lambda_k(a)^{-s}\ln(\lambda_k(a))^2-\big(\lfloor{k+1\over 2}\rfloor\big)^{-s}\ln\big(\lfloor{k+1\over 2}\rfloor\big)^2\Big].
\end{eqnarray*}

Hence the leading order as $s\to +\infty$ in the above sum is $\lambda_1(a)^{-s}\ln(\lambda_1(a))^2$.
The asymptotics makes obvious the existence of $s_a\in[0,+\infty)$ so that 
\begin{equation}
\zeta_a''(s)-2\zeta_R''(s)>0, s\ge s_a.\label{M20}
\end{equation}
{}\hfill $\Box$

\section{The second derivative $\zeta_\Omega''$ at $0$: Proof of Theorem \ref{thm2}}

The proof of Theorem \ref{thm2} relies on the same deformation argument used to prove \cite[Theorem 1.1]{JS4}.
We start this Section by recalling some definition of a variation of a function $a$. We give a proof of Theorem \ref{thm2} at the end.

\subsection{Deformation of a function $a$ and the Hilbert transform $\H$}

Let $l=0$ or $l=\infty$ and let $\ep>0$. A real function $\alpha\in C^l\big([0,\varepsilon), C^\infty({\mathbb S})\big)$
is called a $C^l$-{\it deformation} (or $C^l$-{\it variation}) of a positive function
$a\in C^\infty({\mathbb S})$ when it satisfies the 3 conditions: $\alpha(0,\theta)=a(\theta)$;
For any $\tau\in [0,\ep)$ the function
$\alpha_\tau=\alpha(\tau,\cdot )\in C^\infty({\mathbb S})$
is positive and it satisfies the normalization condition 
\begin{equation}
\int_{\S}\alpha_\tau^{-1}(\theta)d\theta=2\pi.\label{3.1}
\end{equation}

The  entire function $\zeta_{\alpha_\tau}$ has the following smoothness along the deformation $\alpha$ \cite[Lemma 3.5]{JS4}
\begin{equation}
\zeta_{\alpha_\tau}-2\zeta_R\in C^l([0,\ep)_\tau,\F(\C)).\label{3.2}
\end{equation}
Here $\F(\C)$ denotes the space of entire functions on the complex plane.

The {\it Hilbert transform}
$\H$
is the linear operator on
$L^2(\S)$
defined by
$$
\H({\mathbf 1})=0,\quad \H e^{in\theta}={\rm sgn}(n)e^{in\theta}\ \textrm{for an integer}\ n\neq0.
$$
We will use the identities
\begin{equation}
D=\H\Lambda=\Lambda\H,\ D_a=\Lambda_a a^{-1/2}\H a^{1/2}.\label{3.3}
\end{equation}

\subsection{Preliminary Lemma}

\begin{lemma}
\label{lem_snd_der}
Let $a\in C^\infty(\S)$ be positive and  satisfy the normalization condition \eqref{1.4}. Then
\begin{equation}
\T(\ln(\Lambda_a+P_0)(\Lambda_a+P_0)^{-1}(\Lambda_a^2-D_a^2))\ge 0\label{M6}
\end{equation}
with equality if and only if $a$ is conformally equivalent to the constant function $\mathbf 1$.
\end{lemma}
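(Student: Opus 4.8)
The plan is the following. The trace in \eqref{M6} is well defined because $\Lambda_a^2-D_a^2$ is a smoothing operator: from \eqref{3.3} one has $D_a=\Lambda_a\,a^{-1/2}\H a^{1/2}=a^{1/2}\H a^{-1/2}\,\Lambda_a$ (the second identity using $D=\H\Lambda$), hence $\Lambda_a^2-D_a^2=\Lambda_a^2-(a^{1/2}\H a^{-1/2})\,\Lambda_a^2\,(a^{-1/2}\H a^{1/2})$, which is smoothing since $[\H,a]$ is smoothing; composing with the bounded operator $\ln(\Lambda_a+P_0)(\Lambda_a+P_0)^{-1}$ leaves a trace‑class operator. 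I would then reduce \eqref{M6} to a sum over $\Z\setminus\{0\}$: since $(\Lambda_a+P_0)^2=\Lambda_a^2+P_0$ and $\ln(1)=0$ we have $\ln(\Lambda_a+P_0)(\Lambda_a+P_0)^{-1}\Lambda_a^2=\ln(\Lambda_a+P_0)(\Lambda_a+P_0)$, and expanding the trace over the orthonormal eigenbasis $(\phi_n)_{n\in\Z}$ of $D_a$ from \eqref{2.1} (so $D_a^2\phi_n=n^2\phi_n$ by \eqref{2.5}, the $n=0$ term dropping out since $\ln(\Lambda_a+P_0)(\Lambda_a+P_0)\phi_0=0=D_a\phi_0$) gives, with $A:=\Lambda_a+P_0$,
\begin{equation}
\T\big(\ln(A)A^{-1}(\Lambda_a^2-D_a^2)\big)=\sum_{n\in\Z\setminus\{0\}}\Big[\l A\ln(A)\phi_n,\phi_n\r-n^2\l A^{-1}\ln(A)\phi_n,\phi_n\r\Big].\label{redM6}
\end{equation}

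The heart of the proof is to show that each summand of \eqref{redM6} is nonnegative, i.e., with $\mu=|n|$,
\begin{equation}
\l A\ln(A)\phi_n,\phi_n\r\ \ge\ \mu^2\,\l A^{-1}\ln(A)\phi_n,\phi_n\r.\label{keyM6}
\end{equation}
I would attack \eqref{keyM6} by combining: the convexity inequality \eqref{2.6} for the convex function $f(x)=x\ln x$, applied to $A$ (yielding $\l A\ln(A)\phi_n,\phi_n\r\ge f(\l A\phi_n,\phi_n\r)$) and to $A^{-1}$ (yielding $\l A^{-1}\ln(A)\phi_n,\phi_n\r=-\l f(A^{-1})\phi_n,\phi_n\r\le -f(\l A^{-1}\phi_n,\phi_n\r)$); the Cauchy--Bunyakovsky--Schwarz inequality exactly as in \eqref{2.9}, to interpolate between the moments $\l A^s\phi_n,\phi_n\r$ and the logarithmically weighted ones; and crucially the lower bounds $\l A^s\phi_n,\phi_n\r\ge|n|^s$ ($s\ge1$) from \eqref{2.10}. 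A convenient bookkeeping identity is
$(x-\mu^2/x)\ln x=\mu\ln\mu\,(x/\mu-\mu/x)+\mu\,(x/\mu-\mu/x)\ln(x/\mu)$, whose second summand is a nonnegative function of $x$; this isolates the one quantity $\mu^{-1}\l A\phi_n,\phi_n\r-\mu\l A^{-1}\phi_n,\phi_n\r$ that can be negative and must be absorbed using \eqref{2.10}.

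I expect \eqref{keyM6} to be the main obstacle, and it is precisely what makes the case $s=0$ of Theorem \ref{thm2}/Lemma \ref{lem_snd_der} more delicate than Proposition \ref{prop4}: there the relevant scalar function $x^s\ln^2x$ is convex and increasing on $[1,\infty)$ and the estimate closes with a single application of \eqref{2.9}, \eqref{2.6} and \eqref{2.10}, whereas here $x\mapsto\ln(x)/x$ is neither monotone nor convex on $(0,\infty)$ and, by Weinstock's inequality, the spectrum of $\Lambda_a+P_0$ genuinely straddles $1$, so the negative contribution of the range $(1,\mu)$ must be dominated using the full strength of \eqref{2.10} for all $s\ge1$ (not just $s=1$), with the equality cases tracked throughout. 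This is exactly the difficulty the deformation argument of \cite[Theorem 1.1]{JS4} was designed to handle, and I expect the proof to follow that pattern. Finally, for the equality statement: if $a$ is conformally equivalent to $\mathbf 1$ then $\Lambda_a$ and $D_a$ are unitarily conjugate to $\Lambda$ and $D$ respectively, so $\Lambda_a^2-D_a^2=0$ and \eqref{M6} is an equality; conversely, equality in \eqref{M6} forces in particular the $n=1$ summand of \eqref{redM6} to vanish, and that summand equals $\l(A-A^{-1})\ln(A)\phi_1,\phi_1\r$ with $(x-1/x)\ln x\ge0$ and equality only at $x=1$, hence $\Lambda_a\phi_1=\phi_1$, whence $a$ is conformally equivalent to $\mathbf 1$ by \cite[Lemma 2.5]{JS2}.
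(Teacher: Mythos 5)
Your setup is sound: the trace-class remark, the expansion of the trace over $(\phi_n)_{n\in\Z}$ into the termwise expression $\l A\ln(A)\phi_n,\phi_n\r-n^2\l A^{-1}\ln(A)\phi_n,\phi_n\r$ with $A=\Lambda_a+P_0$ coincides with the paper's \eqref{M3}, your treatment of the first term by \eqref{2.6} with $f(x)=x\ln x$ is exactly the paper's \eqref{M4}, and your equality-case argument (the $n=1$ summand is $\l g(A)\phi_1,\phi_1\r$ with $g(x)=(x-1/x)\ln x\ge 0$, vanishing only at $x=1$, whence $\Lambda_a\phi_1=\phi_1$ and \cite[Lemma 2.5]{JS2} applies) would be fine \emph{once termwise nonnegativity is known}. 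But that key termwise inequality is precisely what you leave open (``I expect the proof to follow that pattern''), and the ingredients you list cannot deliver it. Writing $x=\l A\phi_n,\phi_n\r$, $y=\l A^{-1}\phi_n,\phi_n\r$, $\mu=|n|$, your two applications of \eqref{2.6} (to $A$ and to $A^{-1}$) reduce the claim to the scalar inequality $x\ln x\ge-\mu^{2}y\ln y$, while the only constraints available from \eqref{2.10} and its analogue for $s\le-1$, together with Cauchy--Schwarz, are $x\ge\mu$, $y\ge\mu^{-1}$, $xy\ge1$; these do not imply the inequality (take $x=\mu$, $y=2/\mu$: then $x\ln x=\mu\ln\mu<2\mu\ln(\mu/2)=-\mu^2 y\ln y$ for $\mu$ large). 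Your ``bookkeeping identity'' runs into the same wall: it isolates the term $\ln\mu\,(x-\mu^{2}y)$, whose sign is not controlled by any estimate you invoke (the $s\le-1$ moment bound goes the wrong way). More structurally, the summand equals $\l g_\mu(A)\phi_n,\phi_n\r$ with $g_\mu(x)=(x-\mu^{2}/x)\ln x$, which is negative on $(1,\mu)$, so no argument using only the moments of $A^{\pm1}$ against $\phi_n$ can close it.

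The missing idea is the paper's use of the Hilbert transform. By \eqref{3.3} and $(\Lambda_a+P_0)^{-1}\Lambda_a=I-P_0$ one rewrites $n\l\ln(A)A^{-1}\phi_n,\phi_n\r=\l\ln(A)\,a^{-1/2}\H a^{1/2}\phi_n,\phi_n\r$ as in \eqref{3.20a}, and then applies \eqref{2.6} with the convex function $f(x)=-\ln x$ to the \emph{pair} $u=\delta_n^{-1}a^{-1/2}\H a^{1/2}\phi_n$, $v=\phi_n$, $\delta_n=n^{-1}\l\Lambda_a\phi_n,\phi_n\r$ --- legitimate because the sign condition $\l u,\psi_k\r\l\psi_k,v\r\ge0$ is verified in \cite[Section 5]{JS4}. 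This yields $n^{2}\l\ln(A)A^{-1}\phi_n,\phi_n\r\le\l\Lambda_a\phi_n,\phi_n\r\ln\big(n^{2}/\l\Lambda_a\phi_n,\phi_n\r\big)$, i.e.\ it replaces the uncontrollable $(-1)$-moment $\l A^{-1}\phi_n,\phi_n\r$ by the first moment $\l\Lambda_a\phi_n,\phi_n\r$, after which \eqref{2.10} at $s=1$ closes the estimate as in \eqref{M5}. Without this step (or some substitute of comparable strength), your proof does not go through.
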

The operator inside the trace in \eqref{M6} is trace class. Indeed it is the product of the bounded operator $\ln(\Lambda_a+P_0)(\Lambda_a+P_0)^{-1}$ and of the smoothing operator $\Lambda_a^2-D_a^2$ (see Section 2.1).

\begin{proof}[Proof of Lemma \ref{lem_snd_der}]
We expand the trace with respect to the orthonormal basis $(\phi_n)$:
\begin{eqnarray}
&&\T(\ln(\Lambda_a+P_0)(\Lambda_a+P_0)^{-1}(\Lambda_a^2-D_a^2))\nonumber\\
&=&\sum_{n\in\Z\b\{0\}}\big( \l \Lambda_a \ln(\Lambda_a+P_0)\phi_n,\phi_n\r-n^2\l\ln(\Lambda_a+P_0)(\Lambda_a+P_0)^{-1}\phi_n,\phi_n\r\big).\label{M3}
\end{eqnarray}
We used the identity $(\Lambda_a+P_0)^{-1}\Lambda_a^2=\Lambda_a$ and we used \eqref{2.5}. We prove that the summands are nonnegative.

Let $n\in \N\b\{0\}$. First we use \eqref{2.12} for $s=1$:
\begin{equation}
\l \Lambda_a \ln(\Lambda_a+P_0)\phi_n,\phi_n\r\ge \l \Lambda_a\phi_n,\phi_n\r\ln( \l \Lambda_a\phi_n,\phi_n\r).\label{M4}
\end{equation}
In addition we use \eqref{2.5} and \eqref{3.3} and the identity $(\Lambda_a+P_0)^{-1}\Lambda_a=I-P_0$ where $I$ is the identity operator and we obtain
\begin{eqnarray}
n\l\ln(\Lambda_a+P_0)(\Lambda_a+P_0)^{-1}\phi_n,\phi_n\r&=&\l\ln(\Lambda_a+P_0)(\Lambda_a+P_0)^{-1}D_a\phi_n,\phi_n\r\nonumber\\
&=&\l \ln(\Lambda_a+P_0)a^{-1/2}\H a^{1/2}\phi_n,\phi_n\r.\label{3.20a}
\end{eqnarray}

Let 
$$
\delta_n=n^{-1}\l\Lambda_a\phi_n,\phi_n\r,\ u=\delta_n^{-1}a^{-1/2}\H a^{1/2}\phi_n,\ v=\phi_n.
$$
We apply \eqref{2.6} when $f(x)=-\ln(x)$ and $A=\Lambda_a+P_0$  (see details for the sign of $\l u,\psi_k\r\l \psi_k,v\r$ in \cite[Section 5, part 5]{JS4}) 
and we obtain
\begin{equation}
\l \ln(\Lambda_a+P_0)\delta_n^{-1}a^{-1/2}\H a^{1/2}\phi_n,\phi_n\r
\le \ln\big(\l \Lambda_a \delta_n^{-1}a^{-1/2}\H a^{1/2}\phi_n,\phi_n\r\big)=\ln\big(\delta_n^{-1}n\big).\label{3.20b}
\end{equation}
We combine \eqref{3.20a} and \eqref{3.20b} and we obtain
\begin{eqnarray}
&&n^2\l \ln(\Lambda_a+P_0)(\Lambda_a+P_0)^{-1}\phi_n,\phi_n\r\le n\delta_n\ln\big(\delta_n^{-1}n\big)=\l\Lambda_a\phi_n,\phi_n\r\ln\big({n^2\over \l\Lambda_a\phi_n,\phi_n\r}\big)\nonumber\\
&\le &\l\Lambda_a\phi_n,\phi_n\r\ln(n)\le  \l \Lambda_a\phi_n,\phi_n\r\ln( \l \Lambda_a\phi_n,\phi_n\r).\label{M5}
\end{eqnarray}
We used the growth of the logarithm and we used the estimates
$$
\l\Lambda_a\phi_n,\phi_n\r\ge n,\ \delta_n\ge 1,
$$
see \eqref{2.10} for $s=1$. We combine \eqref{M4} and \eqref{M5}  and we obtain
$$
\l \Lambda_a \ln(\Lambda_a+P_0)\phi_n,\phi_n\r\ge n^2\l\ln(\Lambda_a+P_0)(\Lambda_a+P_0)^{-1}\phi_n,\phi_n\r,\ n\in \N\b\{0\}.
$$
We obtain the same estimates for negative integers $n$ by complex conjugation invariance. Then we use again \eqref{M3} and we obtain \eqref{M6}.
 
Now equality in \eqref{M6} means equalities in the last line of \eqref{M5}. Hence
$$
\l\Lambda_a\phi_n,\phi_n\r= n,\ n\in\N\b\{0\}.
$$
The identity for $n=1$ is enough to conclude that $a$ is conformally equivalent to $\mathbf 1$ \cite[Lemma 2.5]{JS2}.
\end{proof}

\subsection{Proof of Theorem \ref{thm2}}
The inequality
$$
(\zeta_a-2\zeta_R)(s)\ge 0,\ s\in \R,
$$
holds by \cite[Theorem 1.1]{JS4}. Since $(\zeta_a-2\zeta_R)(0)=0$ we obtain the inequality \eqref{M1}. In addition $\zeta_a=2\zeta_R$ when $a$ is conformally equivalent to the constant function  $\mathbf 1$. 

Hence we only have to prove that $(\zeta_a-2\zeta_R)''(0)=0$ implies that $a$ is conformally equivalent to a constant function.

Consider the deformation $\alpha\in C^\infty([0,\infty)_\tau, C^\infty(\S))$ introduced in \cite[Theorem 1.3]{JS4}. 
It satisfies the evolution equation
\begin{equation}
{\pa\alpha_\tau\over \pa\tau}=-\alpha_\tau\Lambda\alpha_\tau+\H\alpha_\tau D\alpha_\tau, \tau\ge 0,\label{3.30}
\end{equation}
with initial condition $\alpha_0=a$, and by \cite[Theorem 4.1]{JS4}
\begin{equation}
{\pa\zeta_{\alpha_\tau}\over \pa\tau}(s)=s\T((\Lambda_{\alpha_\tau}+P_{0,\tau})^{-s-1}(\Lambda_{\alpha_\tau}^2-D_{\alpha_\tau}^2)),\label{M10}
\end{equation}
for $s\in \R$ and $\tau\in[0,\infty)$. Here $P_{0,\tau}$ is the orthogonal projection of $L^2(\S)$ onto the one-dimensional space spanned by the function $(2\pi\alpha_\tau)^{-1/2}$.

In addition $\alpha_\tau\to \mathbf{1}$ as $\tau \to\infty$ in $C^\infty$-topology.

Let $s\in \R$. Set $N=|s|+1$.
The operator $(\Lambda_{\alpha_\tau}+P_{0,\tau})^N(\Lambda_{\alpha_\tau}^2-D_{\alpha_\tau}^2)$ is a smoothing operator, see Section 2.1, while $(\Lambda_{\alpha_\tau}+P_{0,\tau})^{-\sigma-1-N}$ is a family of bounded operators in $L^2(\S)$ that is smooth with respect to $\sigma$ in a neighborhood of $s$. Hence we can intertwin the trace on the right hand side of \eqref{M10} and any derivative with respect to the $s$-variable.
We derive \eqref{M10} with respect to $s$ and we denote $'$ or ${d\over ds}$ the derivative with respect to the real variable $s$ and we have
\begin{eqnarray*}
{\pa\zeta_{\alpha_\tau}'\over \pa\tau}(s)
&=&\T((\Lambda_{\alpha_\tau}+P_{0,\tau})^{-s-1}(\Lambda_{\alpha_\tau}^2-D_{\alpha_\tau}^2))\\
&&+s\T({d \over ds}(\Lambda_{\alpha_\tau}+P_{0,\tau})^{-s}(\Lambda_{\alpha_\tau}+P_{0,\tau})^{-1}(\Lambda_{\alpha_\tau}^2-D_{\alpha_\tau}^2))\\
&=&\T((\Lambda_{\alpha_\tau}+P_{0,\tau})^{-s-1}(\Lambda_{\alpha_\tau}^2-D_{\alpha_\tau}^2))\\
&&-s\T(\ln(\Lambda_{\alpha_\tau}+P_{0,\tau})(\Lambda_{\alpha_\tau}+P_{0,\tau})^{-s-1}(\Lambda_{\alpha_\tau}^2-D_{\alpha_\tau}^2))
\end{eqnarray*}
We derive once more in $s$ and we obtain
\begin{eqnarray*}
{\pa\zeta_{\alpha_\tau}''\over \pa\tau}(s)&=&-2\T(\ln(\Lambda_{\alpha_\tau}+P_{0,\tau})(\Lambda_{\alpha_\tau}+P_{0,\tau})^{-s-1}(\Lambda_{\alpha_\tau}^2-D_{\alpha_\tau}^2))\\
&&+s\T(\ln(\Lambda_{\alpha_\tau}+P_{0,\tau})^2(\Lambda_{\alpha_\tau}+P_{0,\tau})^{-s-1}(\Lambda_{\alpha_\tau}^2-D_{\alpha_\tau}^2).
\end{eqnarray*}
Therefore 
\begin{equation}
{\pa\zeta_{\alpha_\tau}''\over \pa\tau}(0)=-2\T(\ln(\Lambda_{\alpha_\tau}+P_{0,\tau})(\Lambda_{\alpha_\tau}+P_{0,\tau})^{-1}(\Lambda_{\alpha_\tau}^2-D_{\alpha_\tau}^2)).
\end{equation}

We apply Lemma \ref{lem_snd_der} to obtain that 
\begin{equation}
\zeta_{\alpha_\tau}''(0)
\textrm{ is nonincreasing in }\tau. \label{M11}
\end{equation}

Moreover since $\alpha_\tau\to 1$ as $\tau\to \infty$ in $C^\infty$-topology, we obtain that 
\begin{equation}
\zeta_{\alpha_\tau}''(0)\to 2\zeta_R''(0),\textrm{ as }\tau \to \infty.\label{M12}
\end{equation}
Indeed we consider the continuous path $\beta\in C([0,\infty)_\ep,C^\infty(\S))$ defined by 
$$
\beta_0=\mathbf{1},\ \beta_\ep=\alpha_{1\over \ep}\textrm{ for }\ep>0.
$$
Then \eqref{3.2} yields 
$$
\zeta_{\beta_\ep}\in C([0,\infty)_\ep,C^\infty(\C\b\{1\}))\textrm{ and }{d^j\zeta_{\beta_\ep}\over ds^j}(0)\to 2{d^j\zeta_R\over ds^j}(0)
\textrm{ as }\ep\to 0^+
$$
for any $j\in \N$. Hence we proved statement \eqref{M12}.

Now assume that $\zeta_a''(0)=2\zeta_R''(0)$. Then we obtain by \eqref{M11} and \eqref{M12} $(\zeta_{\alpha_\tau}-2\zeta_R)''(0)= 0$ for any $\tau$ and 
$$
\T(\ln(\Lambda_{\alpha_\tau}+P_{0,\tau})(\Lambda_{\alpha_\tau}+P_{0,\tau})^{-1}(\Lambda_{\alpha_\tau}^2-D_{\alpha_\tau}^2))=-{1\over 2}{\pa\zeta_{\alpha_\tau}''\over \pa\tau}(0)=0.
$$
Therefore we apply again Lemma \ref{lem_snd_der} and we obtain that $\alpha_\tau$ is conformally equivalent to a constant valued function for any $\tau$. In particular, $a$ is conformally equivalent to $\mathbf 1$.\hfill $\Box$

\section{The difference $\zeta_a-2\zeta_R$ may not be convex everywhere on the real axis: Proof of Proposition \ref{prop6}}

We recall the following result \cite[Proposition 3.8]{JS4}.
\begin{proposition}[see \cite{JS4}]
Let
$\alpha_\tau$
be a
$C^\infty$-variation of the function
$a={\mathbf 1}$. Then, for every
$z\in\C$,
\begin{equation}
\left.{\pa \big(\zeta_{\alpha_\tau}(z)\big)\over \pa \tau}\right|_{\tau=0}=0,
                                  \label{3.54}
\end{equation}
\begin{equation}
\left.{\pa^2 \big(\zeta_{\alpha_\tau}(z)\big)\over \pa \tau^2}\right|_{\tau=0}=
4z\sum_{(n,p)\in \N^2\atop p>0,\ n>0,\ p\not=n}{n^{-z}-p^{-z}\over p^2-n^2}\,pn\,|\hat\beta_{p+n}|^2+2z^2\sum_{n>0}n^{-z}\,|\hat\beta_{2n}|^2,
                                  \label{3.55}
\end{equation}
where
$\beta(\theta)=\left.\frac{\partial\alpha_\tau(\theta)}{\partial\tau}\right|_{\tau=0}$ (and $\alpha_0=\mathbf{ 1}$).
\end{proposition}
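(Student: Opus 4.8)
The plan is to turn the statement into a second‑order perturbation computation for $\mathrm{Tr}\big[(\Lambda_{\alpha_\tau}+P_{0,\tau})^{-z}-(|D_{\alpha_\tau}|+P_{0,\tau})^{-z}\big]$ about $\tau=0$, after a unitary conjugation which makes the unperturbed operator $\tau$‑independent. Write $\beta=\sum_k\hat\beta_ke^{ik\theta}$ (so $\hat\beta_0=\frac1{2\pi}\int_\S\beta=0$ by \eqref{3.1}), and for $\rho\in C^\infty(\S)$ put $\hat\rho_0=\frac1{2\pi}\int_\S\rho$. By Section~2.1, $\zeta_{\alpha_\tau}(z)-2\zeta_R(z)=\mathrm{Tr}\big[(\Lambda_{\alpha_\tau}+P_{0,\tau})^{-z}-(|D_{\alpha_\tau}|+P_{0,\tau})^{-z}\big]$, the operator inside being smoothing. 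Let $\psi_\tau(\theta)=\int_0^\theta\alpha_\tau^{-1}(s)\,ds$, a diffeomorphism of $\S$ with $\psi_\tau(0)=0$ (this uses \eqref{3.1}), and let $V_\tau$ be the unitary operator $(V_\tau f)(\theta)=\alpha_\tau(\theta)^{-1/2}f(\psi_\tau(\theta))$; by \eqref{2.1} it carries the fixed orthonormal basis $\phi_n^{(0)}=e^{in\theta}/\sqrt{2\pi}$ onto the basis $\phi_n^{(\tau)}$ of \eqref{2.1}. Since $|D_{\alpha_\tau}|\phi_n^{(\tau)}=|n|\phi_n^{(\tau)}$ by \eqref{2.5} and $P_{0,\tau}$ is the projection onto $\phi_0^{(\tau)}$, one gets $V_\tau^*(|D_{\alpha_\tau}|+P_{0,\tau})V_\tau=|D|+P_0=:A$ exactly. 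Hence, with $S_\tau:=V_\tau^*(\Lambda_{\alpha_\tau}-|D_{\alpha_\tau}|)V_\tau$, a $C^\infty$ family of self‑adjoint smoothing operators with $S_0=0$,
\[
\zeta_{\alpha_\tau}(z)-2\zeta_R(z)=\mathrm{Tr}\big[(A+S_\tau)^{-z}-A^{-z}\big],\qquad
\langle S_\tau\phi_m^{(0)},\phi_n^{(0)}\rangle=\frac1{2\pi}\langle\Lambda e^{im\psi_\tau},e^{in\psi_\tau}\rangle-|n|\,\delta_{nm},
\]
the last identity because $\alpha_\tau^{1/2}\phi_m^{(\tau)}=(2\pi)^{-1/2}e^{im\psi_\tau}$ and $\alpha_\tau^{1/2}$ is self‑adjoint.

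Next I would expand $(A+S_\tau)^{-z}=\frac1{2\pi i}\oint_\Gamma\lambda^{-z}(\lambda-A-S_\tau)^{-1}\,d\lambda$ (valid for $\mathrm{Re}\,z>0$, $\Gamma\subset\C\setminus(-\infty,0]$ winding once around $[1/2,\infty)\supset\mathrm{Sp}(A+S_\tau)$ for $\tau$ small) by the Neumann series in $S_\tau$, take traces, and differentiate in $\tau$. Since $S_0=0$, only the first two terms of the series contribute to $\partial_\tau^2|_0$, and with $R(\lambda)=(\lambda-A)^{-1}$, $S_j=\partial_\tau^jS_\tau|_0$ and $\frac1{2\pi i}\oint_\Gamma\lambda^{-z}R(\lambda)^2\,d\lambda=-zA^{-z-1}$,
\[
\partial_\tau\zeta_{\alpha_\tau}(z)\big|_0=-z\,\mathrm{Tr}[A^{-z-1}S_1],\qquad
\partial_\tau^2\zeta_{\alpha_\tau}(z)\big|_0=-z\,\mathrm{Tr}[A^{-z-1}S_2]+\frac2{2\pi i}\oint_\Gamma\lambda^{-z}\,\mathrm{Tr}\big[R(\lambda)^2S_1R(\lambda)S_1\big]\,d\lambda
\]
(the interchanges of $\partial_\tau$, $\oint_\Gamma$, $\mathrm{Tr}$ being legitimate as usual since $S_\tau$ is a $C^\infty$ family of smoothing operators). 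From $e^{in\psi_\tau}=e^{in\theta}\big(1+in(\psi_\tau-\theta)-\tfrac{n^2}2(\psi_\tau-\theta)^2+\cdots\big)$ with $\partial_\tau\psi_\tau|_0=-\int_0^\theta\beta$, $\partial_\tau^2\psi_\tau|_0=\int_0^\theta(2\beta^2-\gamma)$ ($\gamma:=\partial_\tau^2\alpha_\tau|_0$, with $\int_\S(2\beta^2-\gamma)=0$ again by \eqref{3.1}), and the elementary identity $\langle\Lambda(\rho e^{in\theta}),e^{in\theta}\rangle=2\pi|n|\hat\rho_0$, one obtains
\[
\langle S_1\phi_m^{(0)},\phi_n^{(0)}\rangle=\frac{\hat\beta_{n-m}}{n-m}\big(n|m|-m|n|\big)\ (n\ne m),\quad
\langle S_1\phi_n^{(0)},\phi_n^{(0)}\rangle=0,\quad
\langle S_2\phi_n^{(0)},\phi_n^{(0)}\rangle=2n^2\!\!\sum_{l\ne0}\frac{|\hat\beta_l|^2}{l^2}\big(|l+n|-|n|\big)
\]
for $n\ne0$ (and the diagonal $n=0$ entries vanish). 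The first expression is nonzero only when $n,m$ have opposite signs, so $A^{-z-1}S_1$ has zero diagonal and $\partial_\tau\zeta_{\alpha_\tau}(z)|_0=0$, which is \eqref{3.54}. (Alternatively, \eqref{3.54} follows from \cite[Theorem~1.1]{JS4}: for every real $s$ the function $\mathbf 1$ minimizes $\zeta_a(s)-2\zeta_R(s)$, and testing against the two‑sided deformations $(1\mp\tau\beta)^{-1}$ forces $\partial_\tau\zeta_{\alpha_\tau}(s)|_0=0$ for $s\in\R$, hence for all $z\in\C$ by analyticity and linearity in $\beta$.)

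For \eqref{3.55} I would substitute these entries into the two traces: $\mathrm{Tr}[A^{-z-1}S_2]=\sum_{n\ne0}|n|^{-z-1}\langle S_2\phi_n^{(0)},\phi_n^{(0)}\rangle$ and $\frac2{2\pi i}\oint_\Gamma\lambda^{-z}\mathrm{Tr}[R^2S_1RS_1]\,d\lambda=2\sum_{n,m\ne0}\big|\langle S_1\phi_m^{(0)},\phi_n^{(0)}\rangle\big|^2\,W(|n|,|m|)$ with $W(a,b)=\frac1{2\pi i}\oint_\Gamma\frac{\lambda^{-z}}{(\lambda-a)^2(\lambda-b)}\,d\lambda$, which by residues equals $\frac{b^{-z}-a^{-z}}{(a-b)^2}-\frac{za^{-z-1}}{a-b}$ for $a\ne b$ and $\frac{z(z+1)}2a^{-z-2}$ for $a=b$. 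Then one reindexes by setting $l=n+p$ with $n,p\ge1$ (the $S_1$‑entries force $n,m$ of opposite signs, and the pair $n>0>m$ and its mirror $n<0<m$ contribute equally). The terms with $|l|=1$ drop out of the $S_2$‑trace because $|n+1|+|n-1|-2|n|=0$ for all $n\ne0$; the diagonal terms $p=n$ of the two traces combine (using $-2z+2z(z+1)=2z^2$) into $2z^2\sum_{n>0}n^{-z}|\hat\beta_{2n}|^2$; and, after symmetrizing in $n\leftrightarrow p$, the off‑diagonal terms combine through the algebraic identity
\[
-\frac{np\,(n^{-z}+p^{-z})}{(n+p)^2}+\frac{2n^2p^2\,(p^{-z-1}-n^{-z-1})}{(n+p)^2(n-p)}=\frac{np\,(n^{-z}-p^{-z})}{p^2-n^2}
\]
into $4z\sum_{n\ne p,\,n,p>0}\frac{n^{-z}-p^{-z}}{p^2-n^2}\,pn\,|\hat\beta_{p+n}|^2$. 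Proving \eqref{3.55} first for $\mathrm{Re}\,z>1$, where all series converge absolutely, and extending by analyticity in $z$ finishes the argument.

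I expect the main obstacle to be this last step: the residue evaluations and, above all, the combinatorial rearrangement — the opposite‑sign bookkeeping, the $|l|=1$ cancellation, the symmetrization — that collapses $\mathrm{Tr}[A^{-z-1}S_2]$ together with the $S_1$–$S_1$ term into the single closed form \eqref{3.55}; the operator‑theoretic points (the contour representation for the unbounded operators, the smoothing/trace‑class claims, and the various interchanges) are routine. A route of comparable difficulty is to bypass the contour integrals and use Rellich analytic perturbation of the eigenvalues of $\Lambda$ directly: the eigenvalue $N\ge1$ splits as $N\pm\tau N|\hat\beta_{2N}|+O(\tau^2)$, and expanding $\sum_N(\mu_{N,+}(\tau)^{-z}+\mu_{N,-}(\tau)^{-z})$ to second order leads to the same formula.
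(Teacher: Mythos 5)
Your proposal is correct, but it does not follow this paper's treatment, because the paper does not reprove this proposition at all: it is imported verbatim from \cite[Proposition 3.8]{JS4}, and the machinery used there (and in the analogous second-variation computation of Sections 6--7 here, e.g.\ the Duhamel-type identity \eqref{K80} with the weights $\rho(n,p,t)$, $h(m,p)$) expands derivatives of $(\Lambda_{\alpha_\tau}+P_{0,\tau})^{t}$ directly, rather than passing through a resolvent calculus. Your route --- conjugating by the unitary $V_\tau$ so that $|D_{\alpha_\tau}|+P_{0,\tau}$ becomes the fixed operator $A=|D|+P_0$ and all the $\tau$-dependence sits in the smoothing perturbation $S_\tau=V_\tau^*(\Lambda_{\alpha_\tau}-|D_{\alpha_\tau}|)V_\tau$, then doing second-order perturbation theory for $\mathrm{Tr}[(A+S_\tau)^{-z}-A^{-z}]$ via the contour representation --- is a legitimate and rather clean alternative. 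I checked the substantive ingredients: the matrix entries $\langle S_1\phi_m^{(0)},\phi_n^{(0)}\rangle=\frac{\hat\beta_{n-m}}{n-m}(n|m|-m|n|)$ (nonzero only for $nm<0$, whence \eqref{3.54}), the diagonal entries $\langle S_2\phi_n^{(0)},\phi_n^{(0)}\rangle=2n^2\sum_{l\neq0}|\hat\beta_l|^2 l^{-2}(|l+n|-|n|)$, the residue values of $W(a,b)$, and the final recombination all come out as you state; in particular $\mathrm{Tr}[A^{-z-1}S_2]=8\sum_{n,p\ge1}n^{1-z}p\,(n+p)^{-2}|\hat\beta_{n+p}|^2$, the $|l|=1$ terms indeed cancel, the diagonal $p=n$ terms give $2z^2\sum n^{-z}|\hat\beta_{2n}|^2$, and after symmetrization the off-diagonal terms collapse exactly to $4z\sum_{n\neq p}\frac{n^{-z}-p^{-z}}{p^2-n^2}np|\hat\beta_{n+p}|^2$, so \eqref{3.55} follows for $\Re z$ large and then for all $z$ by analyticity (using \eqref{3.2}). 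What your approach buys is a transparent separation into an $S_2$-trace and an $S_1$--$S_1$ resolvent term with explicit residues; what the paper's (i.e.\ \cite{JS4}'s) approach buys is avoiding contour integrals for unbounded operators by working with the variation-of-powers identity, which meshes with the trace-class bookkeeping already set up there. Two minor remarks: the operator-theoretic interchanges you defer (trace-norm convergence along the contour, validity of the Neumann expansion for small $\tau$, one-sidedness of the $\tau$-derivatives) are indeed routine; but the parenthetical alternative proof of \eqref{3.54} via the minimization property of \cite[Theorem 1.1]{JS4} is the one weak spot --- it needs the admissibility of the deformation with $-\beta$ and the linearity of the first variation in $\beta$, which you would have to justify --- and is anyway superfluous since the vanishing of the diagonal of $S_1$ already gives \eqref{3.54}.
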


The proof of Proposition \ref{prop6} relies on the analysis of the right hand side of \eqref{3.55}. 
From now on we consider only 
$C^\infty$-variation $\alpha_\tau$ of the function
$a={\mathbf 1}$ so that 
\begin{equation}
\beta(e^{i\theta})=2\cos((2r+1)\theta),\ \theta\in \R,\label{M51}
\end{equation}
for some large integer $r$. Take for instance the smooth variation
\begin{equation}
\alpha_\tau(e^{i\theta})=\big(1-2\tau \cos((2r+1)\theta)\big)^{-1},\ \tau\in (-1/2,1/2),\ \theta\in \R.\label{M52}
\end{equation}

The right hand side of \eqref{3.55} becomes
$$
\left.{\pa^2 \big(\zeta_{\alpha_\tau}(s)\big)\over \pa \tau^2}\right|_{\tau=0}=
-4s\sum_{(n,p)\in \N^2\atop p>0,\ n>0,\ p+n=2r+1}{p^{-s}-n^{-s}\over p^2-n^2}\,pn,
$$
for a real $s$ (we used $\hat \beta_{2r+1}=1$).
Hence 
\begin{eqnarray*}
\left.{\pa^2 \big(\zeta_{\alpha_\tau}(s)\big)\over \pa \tau^2}\right|_{\tau=0}= -8s\sum_{n>0,\ p>n,\ p+n=2r+1} {p^{-s}-n^{-s}\over p^2-n^2}pn.
\end{eqnarray*}
We derive with respect to $s$:
\begin{eqnarray}
\left.{\pa^2 \big(\zeta_{\alpha_\tau}'(s)\big)\over \pa \tau^2}\right|_{\tau=0}
&=&{8\over 2r+1}\sum_{p=r+1}^{2r}{p(2r+1-p)\over 2p-2r-1}\label{M53}\\
&&\times (p^{-s}(-1+s\ln(p))-(2r+1-p)^{-s}(-1+s\ln(2r+1-p))).\nonumber
\end{eqnarray}
(We substitued $n$ by $2r+1-p$.)

Let us make an asymptotic analysis as $r\to \infty$, $0<s<2$. 
\begin{eqnarray}
&&\sum_{p=r+1}^{2r}{p(2r+1-p)\over 2p-2r-1}(p^{-s}(-1+s\ln(p))-(2r+1-p)^{-s}(-1+s\ln(2r+1-p)))\nonumber\\
&=&(2r+1)^{1-s}\sum_{p=r+1}^{2r}{{p\over 2r+1}\big(1-{p\over 2r+1}\big)\over 2{p\over 2r+1}-1}\Big(\big({p\over 2r+1}\big)^{-s}\big(-1+s\ln\big({p\over 2r+1}\big)\big)\nonumber\\
&&-\big(1-{p\over 2r+1}\big)^{-s}\big(-1+s\ln\big(1-{p\over 2r+1}\big)\Big)\nonumber\\
&&+s(2r+1)^{1-s}\ln(2r+1)\sum_{p=r+1}^{2r}{{p\over 2r+1}\big(1-{p\over 2r+1}\big)\over 2{p\over 2r+1}-1}\Big(\big({p\over 2r+1}\big)^{-s}-\big(1-{p\over 2r+1}\big)^{-s}\Big).\nonumber
\end{eqnarray}
Therefore 
\begin{eqnarray}
&&\sum_{p=r+1}^{2r}{p(2r+1-p)\over 2p-2r-1}(p^{-s}(-1+s\ln(p))-(2r+1-p)^{-s}(-1+s\ln(2r+1-p)))\nonumber\\
&=&(2r+1)^{2-s}\Big[\int_{1/2}^1{x(1-x)\over 2x-1}(x^{-s}(-1+s\ln(x))\nonumber\\
&&-(1-x)^{-s}(-1+s\ln(1-x)))dx+o(1)\Big]\nonumber\\
&&+s(2r+1)^{2-s}\ln(2r+1)\Big[\int_{1/2}^1{x(1-x)\over 2x-1}(x^{-s}-(1-x)^{-s})+o(1)\Big]\label{M54}
\end{eqnarray}
as $r\to +\infty$.
We used the following elementary statement for the singularity near $x=1$, see for instance \cite[Section 2.12.7]{DR}:
For a continuous function $\eta\in C((0,1),\R)$ so that  $\eta(x)=O(x^{-\rho})$, $\eta(1-x)=O(x^{-\rho})$ as $x\to 0^+$, $0<\rho<1$, then 
$$
\int_0^1\eta=\lim_{N\to \infty}{1\over N}\sum_{i=1}^{N-1}\eta({i\over N}).
$$

Then the leading order is given by $(2r+1)^{2-s}\ln(2r+1)$ as $r\to +\infty$ and the coefficient in front of it is 
\begin{equation}
s\int_{1/2}^1{x(1-x)\over 2x-1}(x^{-s}-(1-x)^{-s})dx<0\label{M55}
\end{equation}
since $x>1-x$ and $x^{-s}<(1-x)^{-s}$ for $x\in (1/2,1)$ and $s\in (0,2)$.

Now we combine \eqref{M53}, \eqref{M54} and \eqref{M55} and we obtain that at fixed $s\in (0,2)$ there exists  a large integer $r_s$ so that
\begin{equation}
\left.{\pa^2 \big(\zeta_{\alpha_\tau}'(s)\big)\over \pa \tau^2}\right|_{\tau=0}<0\label{M56}
\end{equation}
for any integer $r\ge r_s$ (let us remind that the path $\alpha_\tau$ is defined by the integer $r$).

From \eqref{3.54} it also follows that
\begin{equation}
\left.{\pa \big(\zeta_{\alpha_\tau}'(s)\big)\over \pa \tau}\right|_{\tau=0}=0.\label{M57}                                  
\end{equation}
At $\tau=0$, $\zeta_{\alpha_\tau}'(s)=2\zeta_R'(s)$.

Therefore we make a Taylor expansion of $\zeta_{\alpha_\tau}'(s)$ with respect to $\tau$ in a neighborhood of $0$ and we obtain that there exists $\tau_s$
$$
\zeta_{\alpha_\tau}'(s)<2\zeta_R'(s)
$$
for any integer $r\ge r_s$ and any $\tau\in (0,\tau_s)$.

Now we are ready to conclude the proof of Proposition \ref{prop6}. Take $\U$ any neighborhood of $\mathbf 1$ in $C^\infty(\S)$. Let $s\in (0,2)$. Then define the integer $r_s$ and the positive real number $\tau_s$ as above and choose $\tau\in (0,\tau_s)$ small enough so that 
$$
\alpha_\tau\in \U.
$$ 
Such an $\alpha_\tau$ plays the role of $a$ in the statement of Proposition \ref{prop6}.
Indeed $\zeta_{\alpha_\tau}'(s)<2\zeta_R'(s)$. Since $\zeta_{\alpha_\tau}'(0)=2\zeta_R'(0)$ (see \cite{EW,JS4}) the function $\zeta_{\alpha_\tau}-2\zeta_R$ is not convex in $(0,2)$.\hfill $\Box$\\

We can go beyond the interval $(0,2)$. Now let $s>2$
\begin{eqnarray*}
\left.{\pa^2 \big(\zeta_{\alpha_\tau}'(s)\big)\over \pa \tau^2}\right|_{\tau=0}
&=&{8\over 2r+1}\sum_{p=r+1}^{2r}{p(2r+1-p)\over 2p-2r-1}\\
&&\times(p^{-s}(-1+s\ln(p))-(2r+1-p)^{-s}(-1+s\ln(2r+1-p)))\\
&=&{8\over 2r+1}\sum_{p=1}^{2r}{(2r+1-p)\over 2p-2r-1}p^{1-s}(-1+s\ln(p))
\end{eqnarray*}
(We make a change of variable ``$p$''$=2r+1-p$ at the last line.)
Then
\begin{eqnarray*}
&&\sum_{p=1}^{2r}{(2r+1-p)\over 2p-2r-1}p^{1-s}(-1+s\ln(p))\\
&=&\sum_{p=1}^{2r}p^{1-s}(1-s\ln(p))-\sum_{p=1}^{2r}{p^{2-s}(1-s\ln(p))\over 2p-2r-1}\\
&=&\sum_{p=1}^{2r}p^{1-s}(1-s\ln(p))-\sum_{p=\lfloor \sqrt{r}\rfloor+1}^{2r}{p^{2-s}(1-s\ln(p))\over 2p-2r-1}
-\sum_{p=1}^{\lfloor \sqrt{r}\rfloor}{p^{2-s}(1-s\ln(p))\over 2p-2r-1}.
\end{eqnarray*}
We conclude using the elementary facts:
Note that
$$
\sum_{p=1}^{2r} p^{1-s}(1-s\ln(p))\to \zeta_R(s-1)+s\zeta_R'(s-1)\textrm{ as }r\to\infty,
$$
(here the series is absolutely convergent) and  
\begin{eqnarray*}
&&|\sum_{p=\lfloor \sqrt{r}\rfloor+1}^{2r}{p^{-s+2}\over 2p-2r-1}(1-s\ln(p))|\le r^{1-s/2}(1+|s|\ln(2r))\sum_{p=\lfloor \sqrt{r}\rfloor+1}^{2r}{1\over |2p-2r-1|}\\
&\le &2r^{1-s/2}(1+|s|\ln(2r))(1+\int_1^{2r}{dt\over t})=2r^{1-s/2}(1+|s|\ln(2r))(1+\ln(2r))\to 0,
\end{eqnarray*} 
\begin{eqnarray}
\big|\sum_{p=1}^{\lfloor \sqrt{r}\rfloor}{p^{2-s}(1-s\ln(p))\over 2p-2r-1}\big|&\le& (2r+1-2\sqrt{r})^{-1}\sum_{p=1}^{\lfloor \sqrt{r}\rfloor}p^{2-s}(1+|s|\ln(p))\\
&\le&{\sqrt{r}\over 2r+1-2\sqrt{r}}\sum_{p=1}^{\lfloor \sqrt{r}\rfloor}p^{1-s}(1+|s|\ln(p))\to 0,
\end{eqnarray}
as $r\to +\infty$.
Hence we finally obtain that
\begin{equation}
{2r+1\over 8}\left.{\pa^2 \big(\zeta_{\alpha_\tau}'(s)\big)\over \pa \tau^2}\right|_{\tau=0}\to \zeta_R(s-1)+s\zeta_R'(s-1)\textrm{ as }r\to\infty.\label{M58}
\end{equation}

We recall the formula \cite[Chapter 2, Section 2.1]{Tit}:
$$
\zeta_R(z)={z\over z-1}-z\int_1^{\infty}{u-\lfloor u\rfloor\over u^{1+z}}du.
$$
There is a single pole at $z=1$ and there exists $s_0\in (2,\infty)$  so that 
$$
\zeta_R(s-1)+s\zeta_R'(s-1)<0,\ s\in(2,s_0).
$$
Note that $s_0<+\infty$ since $\zeta_R(z)\to 1$ and $(z+1)\zeta_R'(z)\to 0$ as $\Re z\to +\infty$.

Let $s\in (2,s_0)$ and for $r$ large enough we again obtain \eqref{M56}. Repeating the proof of Proposition \ref{prop6} we obtained the following result.

\begin{proposition}
\label{prop7}
Let $\U$ be an open neighborhood of $1$ in $C^\infty(\S)$ and let $s\in (2,s_0)$.
There exists a smooth positive function $a\in \U$  so that
$$
(\zeta_a'-2\zeta_R')(s)< 0.
$$
\end{proposition}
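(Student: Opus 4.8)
The plan is to repeat, essentially verbatim, the argument used for Proposition \ref{prop6}, feeding in the asymptotic \eqref{M58} in place of \eqref{M54}--\eqref{M55}. Fix $s\in(2,s_0)$ and work with the explicit $C^\infty$-variation $\alpha_\tau$ of $\mathbf 1$ given by \eqref{M52}, which depends on a large integer $r$ through $\beta(e^{i\theta})=2\cos((2r+1)\theta)$. By \eqref{M58},
$$
\frac{2r+1}{8}\left.{\pa^2\big(\zeta_{\alpha_\tau}'(s)\big)\over\pa\tau^2}\right|_{\tau=0}\longrightarrow \zeta_R(s-1)+s\zeta_R'(s-1)\qquad\text{as }r\to\infty,
$$
and the right-hand side is strictly negative on $(2,s_0)$ by the representation $\zeta_R(z)=\frac{z}{z-1}-z\int_1^\infty\frac{u-\lfloor u\rfloor}{u^{1+z}}\,du$ together with $\zeta_R(z)\to1$ and $(z+1)\zeta_R'(z)\to0$ as $\Re z\to+\infty$ (this is exactly how $s_0$ was produced in the text). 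Hence there is an integer $r_s$ such that \eqref{M56} holds, i.e.\ $\left.{\pa^2\big(\zeta_{\alpha_\tau}'(s)\big)\over\pa\tau^2}\right|_{\tau=0}<0$, for every $r\ge r_s$.

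Next I would combine this with \eqref{M57}, which states $\left.{\pa\big(\zeta_{\alpha_\tau}'(s)\big)\over\pa\tau}\right|_{\tau=0}=0$ (a consequence of \eqref{3.54}), and with the identity $\zeta_{\alpha_0}'(s)=2\zeta_R'(s)$. Since \eqref{3.2} guarantees that $\tau\mapsto\zeta_{\alpha_\tau}$ is $C^\infty$ into $\F(\C)$, in particular $\tau\mapsto\zeta_{\alpha_\tau}'(s)$ is $C^2$ near $\tau=0$, a second-order Taylor expansion at $\tau=0$ yields a $\tau_s>0$ with $\zeta_{\alpha_\tau}'(s)<2\zeta_R'(s)$ for all $\tau\in(0,\tau_s)$, provided $r\ge r_s$. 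Finally, because $\alpha_\tau\to\mathbf 1$ in $C^\infty(\S)$ as $\tau\to0$, I can pick $\tau\in(0,\tau_s)$ small enough that $\alpha_\tau\in\U$; this $\alpha_\tau$ plays the role of $a$, and then $(\zeta_a'-2\zeta_R')(s)=\zeta_{\alpha_\tau}'(s)-2\zeta_R'(s)<0$, which is the claim.

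There is no genuinely new obstacle at this stage: the only nontrivial ingredients, namely the asymptotic \eqref{M58} and the negativity of $\zeta_R(z-1)+z\zeta_R'(z-1)$ on the interval $(2,s_0)$, are already established in the text. The sole point requiring care is that the Taylor step and the choice of $\tau$ be carried out \emph{after} fixing $s$ and an admissible $r\ge r_s$, so that one is expanding a fixed $C^2$ function of $\tau$; this is immediate from the $\tau$-regularity supplied by \eqref{3.2}.
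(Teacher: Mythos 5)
Your proposal is correct and follows essentially the same route as the paper: fix $s\in(2,s_0)$, use the deformation \eqref{M52} together with the asymptotic \eqref{M58} and the negativity of $\zeta_R(s-1)+s\zeta_R'(s-1)$ on $(2,s_0)$ to get \eqref{M56} for large $r$, then combine with \eqref{M57} and the $\tau$-regularity from \eqref{3.2} in a second-order Taylor expansion, and finally choose $\tau$ small so that $\alpha_\tau\in\U$. This is exactly the paper's ``repeat the proof of Proposition \ref{prop6}'' argument, with no substantive difference.
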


\section{Addenda: Comments}
We proved in \cite{JS4} that 
$$
(\zeta_a-2\zeta_R)(s)\ge 0
$$
for any $s\in \R$ and any smooth positive function $a\in C^\infty(\S)$ satisfying the normalizing condition \eqref{1.4}. We used a path of deformaton $\alpha_\tau$ (see Section 2). The estimate $(\zeta_a-2\zeta_R)(s)\ge 0$ was actually proved in an easier way when  $|s|\ge 1$ in \cite{JS2}: The proof relied on the estimates 
$$
\l(\Lambda_a+P_0)^s\phi_n,\phi_n\r\ge |n|^s\textrm{ for }(n,s)\in \Z\times (-\infty,-1]\cup [1,+\infty).
$$ 
In this Section we focus on the loss of these estimates for $s$ in a neighborhood of $0$.

\subsection{Main result}
Let $\alpha_\tau\in C^\infty(\S)$,  $\tau\in [0,\ep)$, be a $C^\infty$-variation of $\mathbf 1$.  We denote $P_{0,\tau}$ the orthogonal projection of $L^2(\S)$ onto the one-dimensional space spanned by the function $(2\pi\alpha_\tau)^{-1/2}$.
We recall that the operator $\ln(\Lambda_{\alpha_\tau}+P_{0,\tau})-\ln(|D_{\alpha_\tau}|+P_{0,\tau})$ is a smoothing operator at each $\tau$, see Section 2.1. Actually it defines a family of smoothing operators which is smooth with respect to $\tau$:
\begin{equation}
\ln(\Lambda_{\alpha_\tau}+P_{0,\tau})-\ln(|D_{\alpha_\tau}|+P_{0,\tau})\in C^\infty([0,\ep)_\tau,\mathcal{L}(H^l(\S),H^{l'}(\S))),\label{5.1}
\end{equation}
for any nonnegative real $l,l'$ where $\mathcal{L}(H^l(\S),H^{l'}(\S))$ denotes the Banach space of bounded operators from the Sobolev space $H^l(\S)$ of order $l$ to the one of order $l'$, see \cite[Lemma 3.4]{JS4} (``$\ln(\Lambda_{\alpha_\tau}+P_{0,\tau})-\ln(|D_{\alpha_\tau}|+P_{0,\tau})=-{\pa H\over \pa z}(\tau,0)$'' there).

For each $\tau$ we consider the orthornomal basis $(\phi_{m,\tau})_{m\in \Z}$ of eigenvectors of  $|D_{\alpha_\tau}|$:
$$
\phi_{m,\tau}(\theta)={1\over \sqrt{2\pi\alpha_\tau(\theta)}}e^{im\int_0^\theta \alpha_\tau^{-1}(s)ds},\
m\in\Z,\ \theta\in \R.
$$
We remind that $\ln(|D_{\alpha_\tau}|+P_{0,\tau})\phi_m=\ln|m|\phi_m$. By \eqref{5.1}
$$
\l\ln(\Lambda_{\alpha_\tau}+P_{0,\tau})\phi_{m,\tau},\phi_{m,\tau}\r\in C^\infty([0,\ep)_\tau,\R).
$$
for $m\in\Z\b\{0\}$.

We expand $\l\ln(\Lambda_{\alpha_\tau}+P_{0,\tau})\phi_{m,\tau},\phi_{m,\tau}\r$ at order $2$ in a neighborhood of $\tau=0$.

\begin{theorem}
\label{thm3}
Let $m$ be a positive integer. We have 
\begin{equation}
{d\over d\tau}\l\ln(\Lambda_{\alpha_\tau}+P_{0,\tau})\phi_{m,\tau},\phi_{m,\tau}\r_{|\tau=0}=0,\label{K60a}
\end{equation}
\begin{eqnarray}
{1\over 4}{d^2\over d\tau^2}\l\ln(\Lambda_{\alpha_\tau}+P_{0,\tau})\phi_{m,\tau},\phi_{m,\tau}\r_{|\tau=0}
=\sum_{p>0,\ p\not=m}\gamma(p,m)|\hat \beta_{m+p}|^2,\label{K60b}
\end{eqnarray}
where 
$$
\gamma(p,m)={pm\Big(2\ln(p/m)mp
+(p+m)(m-p)\Big)\over (m-p)^2(m+p)^2}
$$
for $(p,m)\in(\N\b\{0\})^2,\ p\not=m$.
Therefore the following Taylor expansion at $0$ holds
\begin{equation}
\l\ln(\Lambda_{\alpha_\tau}+P_{0,\tau})\phi_{m,\tau},\phi_{m,\tau}\r
=\ln m+2\tau^2\sum_{p>0,\ p\not=m}\gamma(p,m)|\hat \beta_{m+p}|^2+o(\tau^2),\ \tau\to 0^+.\label{K60c}
\end{equation}
\end{theorem}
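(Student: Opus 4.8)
The plan is to compute the first and second derivatives in $\tau$ of the diagonal matrix element $\langle\ln(\Lambda_{\alpha_\tau}+P_{0,\tau})\phi_{m,\tau},\phi_{m,\tau}\rangle$ by setting up a perturbative expansion. I would first recall that $\Lambda_{\alpha_\tau}$, $D_{\alpha_\tau}=\alpha_\tau^{1/2}D\alpha_\tau^{1/2}$, and the basis $(\phi_{m,\tau})$ all depend smoothly on $\tau$, and that by \eqref{5.1} the operator $R_\tau:=\ln(\Lambda_{\alpha_\tau}+P_{0,\tau})-\ln(|D_{\alpha_\tau}|+P_{0,\tau})$ is a smoothing operator depending $C^\infty$-smoothly on $\tau$. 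Since $\langle\ln(|D_{\alpha_\tau}|+P_{0,\tau})\phi_{m,\tau},\phi_{m,\tau}\rangle=\ln m$ is independent of $\tau$, the whole $\tau$-dependence sits in $\langle R_\tau\phi_{m,\tau},\phi_{m,\tau}\rangle$. Thus the problem reduces to a Taylor expansion of $R_\tau$ at $\tau=0$, where $\alpha_0=\mathbf 1$, $\Lambda_{\alpha_0}=\Lambda$, $|D_{\alpha_0}|=|D|$, so $R_0=0$ and $\phi_{m,0}(\theta)=(2\pi)^{-1/2}e^{im\theta}$.

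The key computational device is a contour-integral (Cauchy/Dunford) representation of the logarithm: write $\ln(A+P_0)=\frac{1}{2\pi i}\oint \ln\lambda\,(\lambda - A - P_0)^{-1}\,d\lambda$ over a suitable contour encircling the spectrum, or equivalently use the identity $\ln(\Lambda_{\alpha_\tau}+P_{0,\tau})-\ln(|D_{\alpha_\tau}|+P_{0,\tau})=\int_0^\infty\big[(t+|D_{\alpha_\tau}|+P_{0,\tau})^{-1}-(t+\Lambda_{\alpha_\tau}+P_{0,\tau})^{-1}\big]dt$. This turns the problem into differentiating resolvents, which is standard: $\partial_\tau(t+\Lambda_{\alpha_\tau})^{-1}=-(t+\Lambda_{\alpha_\tau})^{-1}(\partial_\tau\Lambda_{\alpha_\tau})(t+\Lambda_{\alpha_\tau})^{-1}$, and similarly for the second derivative (with the usual two-term product rule). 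One also needs $\partial_\tau\Lambda_{\alpha_\tau}$ and $\partial_\tau D_{\alpha_\tau}^2$ at $\tau=0$: from $\Lambda_{\alpha_\tau}=\alpha_\tau^{1/2}\Lambda\alpha_\tau^{1/2}$ and $\partial_\tau\alpha_\tau|_{\tau=0}=\beta$ one gets $\partial_\tau\Lambda_{\alpha_\tau}|_{\tau=0}=\frac12(\beta\Lambda+\Lambda\beta)$, and $\partial_\tau D_{\alpha_\tau}^2|_{\tau=0}=\frac12(\beta D^2+D^2\beta) + D\beta D$ (using $D_{\alpha_\tau}=\alpha_\tau^{1/2}D\alpha_\tau^{1/2}$). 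Sandwiching between $\phi_{m,0}=e^{im\theta}/\sqrt{2\pi}$ and expanding $\beta$ in Fourier modes, the off-diagonal matrix elements $\langle(\cdot)e^{ip\theta},e^{im\theta}\rangle$ pick out the Fourier coefficient $\hat\beta_{m\pm p}$, and the $t$-integrals over products of $(t+|p|)^{-1}$, $(t+|m|)^{-1}$ reduce to elementary rational/logarithmic integrals in $t$ — these are exactly what produce the $\ln(p/m)$ and rational factors in $\gamma(p,m)$. The contribution of the projections $P_{0,\tau}$ must be tracked carefully but only affects the $p=0$ mode, which is excluded (or accounted for) by the normalization; the first-order term vanishes, giving \eqref{K60a}, because it is linear in $\hat\beta_k$ and, after the $t$-integration, the coefficient of $\hat\beta_{m+p}\hat\beta_{m-p}$-type terms cancels — this matches the vanishing of the first variation already known from \cite[Proposition 3.8]{JS4} / \eqref{3.54}.

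Alternatively — and this may be the cleaner route to present — one can avoid resolvent calculus by relating $\langle\ln(\Lambda_{\alpha_\tau}+P_{0,\tau})\phi_{m,\tau},\phi_{m,\tau}\rangle$ directly to the second-derivative formula \eqref{3.55} for $\zeta_{\alpha_\tau}$. Indeed, from \eqref{2.7} with $m=1$ (first derivative) one has $-\zeta_{\alpha_\tau}'(s)=\T[(\Lambda_{\alpha_\tau}+P_{0,\tau})^{-s}\ln(\Lambda_{\alpha_\tau}+P_{0,\tau})-(|D_{\alpha_\tau}|+P_{0,\tau})^{-s}\ln(|D_{\alpha_\tau}|+P_{0,\tau})]$; expanding the trace in the basis $(\phi_{n,\tau})$ gives $-\zeta_{\alpha_\tau}'(s)=\sum_{n\neq0}\big(\langle(\Lambda_{\alpha_\tau}+P_{0,\tau})^{-s}\ln(\Lambda_{\alpha_\tau}+P_{0,\tau})\phi_{n,\tau},\phi_{n,\tau}\rangle-|n|^{-s}\ln|n|\big)$. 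One would differentiate this identity twice in $\tau$, evaluate at $\tau=0$ using \eqref{3.55}, and then match the result against a parallel $\tau$-expansion of the summands; the term one wants, $\partial_\tau^2\langle\ln(\Lambda_{\alpha_\tau}+P_{0,\tau})\phi_{m,\tau},\phi_{m,\tau}\rangle|_{\tau=0}$, should be extractable by analytically continuing in $s$ and isolating the $m$-th summand (since the second-order term is quadratic in $\hat\beta$, each Fourier mode $|\hat\beta_{m+p}|^2$ contributes independently, so one can take $\beta$ with a single pair of nonzero Fourier coefficients as in \eqref{M51} and read off $\gamma(p,m)$). Then \eqref{K60c} is just the Taylor formula with the smooth remainder guaranteed by \eqref{5.1}.

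\textbf{The main obstacle} I expect is the bookkeeping around the rank-one projections $P_{0,\tau}$ and the $n=0$ / constant mode: $\ln$ is singular at $0$, the kernels of $\Lambda_{\alpha_\tau}$ and $|D_{\alpha_\tau}|$ both deform with $\tau$ (spanned by $(2\pi\alpha_\tau)^{-1/2}$), and one must verify that the $P_{0,\tau}$-additions combine so that the logarithm is applied to a strictly positive operator throughout and that the $p=0$ Fourier mode of $\beta$ drops out (it does, because the normalization \eqref{3.1} forces $\int\alpha_\tau^{-1}$ constant, i.e. $\hat\beta_0=\int\beta/(2\pi)$ is constrained, but more importantly the matrix elements being computed only involve $p\neq0$). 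Getting the second-order resolvent expansion to collapse to the stated closed form — in particular confirming that the genuinely nontrivial part of $\partial_\tau^2$ reduces to a convergent sum of the elementary $t$-integrals $\int_0^\infty\frac{dt}{(t+p)(t+m)}$, $\int_0^\infty\frac{dt}{(t+p)(t+m)^2}$, etc., producing precisely $\frac{pm(2\ln(p/m)mp+(p+m)(m-p))}{(m-p)^2(m+p)^2}$ — is the one place where the routine calculation has to be done with care; everything else (smoothness, convergence of the series via the smoothing property, vanishing of the first derivative) follows from results already in the excerpt.
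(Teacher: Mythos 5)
Your primary route is essentially the paper's own proof: reduce to a second-order Taylor expansion at $\tau=0$ of $\ln(\Lambda_{\alpha_\tau}+P_{0,\tau})$ together with the $\tau$-dependence of $\phi_{m,\tau}$, compute matrix elements in the Fourier basis, and let elementary parameter integrals produce the logarithmic factors. The only real difference is the representation used to differentiate the logarithm: the paper differentiates $\partial_t A_\tau^t=A_\tau^t\ln A_\tau$ in $\tau$ and integrates over $t\in[0,1]$, obtaining $\int_0^1 A_\tau^t\,(\partial_\tau\ln A_\tau)\,A_\tau^{1-t}\,dt=\partial_\tau A_\tau$ with $A_\tau=\Lambda_{\alpha_\tau}+P_{0,\tau}$, which in the Fourier basis is inverted by dividing by $\rho(n,p,1)=\int_0^1\max(|n|,1)^t\max(|p|,1)^{1-t}\,dt$; your resolvent integral $\int_0^\infty\big[(t+B)^{-1}-(t+A)^{-1}\big]dt$ yields the same rational and logarithmic weights. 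Your observation that $\langle\ln(|D_{\alpha_\tau}|+P_{0,\tau})\phi_{m,\tau},\phi_{m,\tau}\rangle\equiv\ln m$, so that all the $\tau$-dependence sits in the smoothing difference $R_\tau$, is a legitimate slight reorganization of the paper's bookkeeping; note that you still need $\frac{d}{d\tau}\phi_{m,\tau}\big|_{\tau=0}$ for the cross term $4\Re\langle\dot R_0\,\dot\phi_{m,0},\phi_{m,0}\rangle$, and the second $\tau$-derivatives of both operators for $\ddot R_0$ (the paper's analogues are \eqref{K70a}--\eqref{K70e}).

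Two caveats. First, the substance of the theorem is the explicit value of $\gamma(p,m)$, and that is exactly the part you defer: the paper's Sections 6 and 7 are devoted to carrying it out, including the second derivatives of $\phi_{m,\tau}$ and of $\Lambda_{\alpha_\tau}+P_{0,\tau}$, the identity $\int_{\S}fG=0$ used to collapse the contribution of negative Fourier modes, and the cancellation of the $p=m$ summand; as written, your text is a correct strategy rather than a proof. Second, your ``cleaner'' alternative via \eqref{2.7} and \eqref{3.55} does not work as stated: the second variation of the trace determines only the sum over $n$ of the diagonal second variations, not each summand individually, even when $\beta$ has a single Fourier pair. Indeed, for $\beta=2\cos(r\theta)$ the paper's Section 5.2 shows that the $n=m$ and $n=r-m$ summands have opposite second variations, $\gamma(m,r-m)=-\gamma(r-m,m)$, so individual diagonal terms cannot be read off from the trace identity; you would be forced back to the full matrix-element computation of your first route.
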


\subsection{Examples of smooth variations $\alpha_\tau$}
Let $m$ be a positive integer. The sign of $m$ is not relevant since $\l\ln(\Lambda_{\alpha_\tau}+P_{0,\tau})\phi_{m,\tau},\phi_{m,\tau}\r$ has the same value when $m$ is replaced by its opposite.

When 
$$
\alpha_\tau=\big(1-2\tau\cos(r\theta)\big)^{-1}
$$
for a positive integer $r> m$ then \eqref{K60b} gives 
\begin{eqnarray*}
{1\over 4}{d^2\over d\tau^2}\l\ln(\Lambda_{\alpha_\tau}+P_{0,\tau})\phi_{m,\tau},\phi_{m,\tau}\r_{|\tau=0}
&=&\gamma(r-m,m)\\
&=&{(r-m)m\Big(2\ln({r-m\over m})m(r-m)
+r(2m-r)\Big)\over (2m-r)^2r^2}
\end{eqnarray*}
and 
$$
{1\over 4}{d^2\over d\tau^2}\l\ln(\Lambda_{\alpha_\tau}+P_{0,\tau})\phi_{r-m,\tau},\phi_{r-m,\tau}\r_{|\tau=0}
=\gamma(m,r-m)=-\gamma(r-m,m).
$$

We consider the asymptotic regime when $r\to \infty$. In that case we have 
$$
\gamma(r-m,m)=-r^{-1}m+o(r^{-1})\textrm{ as }r\to +\infty.
$$
Therefore for $r$ large enough with respect to $m$ 
$$
{d^2\over d\tau^2}\l\ln(\Lambda_{\alpha_\tau}+P_{0,\tau})\phi_{m,\tau},\phi_{m,\tau}\r_{|\tau=0}<0,
$$
and 
$$
{d^2\over d\tau^2}\l\ln(\Lambda_{\alpha_\tau}+P_{0,\tau})\phi_{r-m,\tau},\phi_{r-m,\tau}\r_{|\tau=0}>0,
$$
Then the expansion \eqref{K60c} implies that for small enough positive $\tau$
$$
\l\ln(\Lambda_{\alpha_\tau}+P_{0,\tau})\phi_{m,\tau},\phi_{m,\tau}\r<\ln m,\ \l\ln(\Lambda_{\alpha_\tau}+P_{0,\tau})\phi_{r-m,\tau},\phi_{r-m,\tau}\r>\ln(r-m).
$$
Hence for small enough positive real $s$ and small enough positive $\tau$
$$
\l (\Lambda_{\alpha_\tau}+P_{0,\tau})^s\phi_{m,\tau},\phi_{m,\tau}\r<m^s,\ \l (\Lambda_{\alpha_\tau}+P_{0,\tau})^{-s}\phi_{r-m,\tau},\phi_{r-m,\tau}\r<(r-m)^{-s}.
$$

\subsection{Consequence}
We collect the example of deformations of the previous subsection and we obtain the following result. In the statement given below $\tilde m$ is the integer $r-m$ of the previous subsection.

\begin{corollary}
\label{cor1}
Let $\U$ be an open neighborhood of $1$ in $C^\infty(\S)$ and let $m$ be a positive integer.
There exists a smooth positive function $a\in \U$ that satisfies the normalizing condition \eqref{1.4} and there exist $s_m\in (0,1)$ and a positive integer $\tilde m$ so that
$$
\l(\Lambda_a+P_0)^s\phi_m,\phi_m\r <m^s,\ \l(\Lambda_a+P_0)^{-s}\phi_{\tilde m},\phi_{\tilde m}\r <\tilde m^{-s}
$$
for any $s\in (0,s_m)$.
\end{corollary}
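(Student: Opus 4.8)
The plan is to take the statement essentially for free from the construction already carried out in the preceding subsection: that discussion produces, for suitable $r$ and small $\tau>0$, the two logarithmic inequalities $\langle\ln(\Lambda_{\alpha_\tau}+P_{0,\tau})\phi_{m,\tau},\phi_{m,\tau}\rangle<\ln m$ and $\langle\ln(\Lambda_{\alpha_\tau}+P_{0,\tau})\phi_{\tilde m,\tau},\phi_{\tilde m,\tau}\rangle>\ln\tilde m$ with $\tilde m=r-m$, and it only remains to (i) record that the function realising these lies in $\U$ and satisfies \eqref{1.4}, and (ii) upgrade the inequalities for $\ln(\Lambda_a+P_0)$ into inequalities for the real powers $(\Lambda_a+P_0)^{\pm s}$ with $s>0$ small, by a first-order Taylor expansion in the variable $s$.

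For step (i): given $\U$ and $m$, fix an integer $r>m$ large enough that $\gamma(r-m,m)<0$, which is possible by the asymptotics $\gamma(r-m,m)=-r^{-1}m+o(r^{-1})$ established above (largeness of $r$ also ensures $r\neq 2m$, so that $\tilde m:=r-m\neq m$ and every value of $\gamma$ occurring is defined). Put $\alpha_\tau=(1-2\tau\cos(r\theta))^{-1}$ for $\tau\in(-1/2,1/2)$. This is a $C^\infty$-variation of $\mathbf 1$: each $\alpha_\tau$ is smooth and positive, $\alpha_0=\mathbf 1$, and $\int_{\S}\alpha_\tau^{-1}\,d\theta=\int_0^{2\pi}(1-2\tau\cos(r\theta))\,d\theta=2\pi$, so \eqref{1.4} holds; moreover $\tau\mapsto\alpha_\tau$ is continuous into $C^\infty(\S)$, whence $\alpha_\tau\in\U$ for $\tau$ small. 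Here $\beta=2\cos(r\theta)$, so $|\hat\beta_{m+p}|^2$ equals $1$ if $m+p=r$ and $0$ otherwise; thus by Theorem \ref{thm3} and \eqref{K60b} the first $\tau$-derivatives at $0$ of $\langle\ln(\Lambda_{\alpha_\tau}+P_{0,\tau})\phi_{m,\tau},\phi_{m,\tau}\rangle$ and of $\langle\ln(\Lambda_{\alpha_\tau}+P_{0,\tau})\phi_{\tilde m,\tau},\phi_{\tilde m,\tau}\rangle$ vanish, while the second $\tau$-derivatives at $0$ equal $4\gamma(r-m,m)<0$ and $4\gamma(m,r-m)=-4\gamma(r-m,m)>0$ respectively. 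The Taylor expansion \eqref{K60c} then yields $\tau_0>0$ such that, for all $\tau\in(0,\tau_0)$, both $\langle\ln(\Lambda_{\alpha_\tau}+P_{0,\tau})\phi_{m,\tau},\phi_{m,\tau}\rangle<\ln m$ and $\langle\ln(\Lambda_{\alpha_\tau}+P_{0,\tau})\phi_{\tilde m,\tau},\phi_{\tilde m,\tau}\rangle>\ln\tilde m$. Shrinking $\tau_0$ so that also $\alpha_\tau\in\U$ on $(0,\tau_0)$, fix one such $\tau$ and set $a:=\alpha_\tau$, so that $\phi_m=\phi_{m,\tau}$, $\phi_{\tilde m}=\phi_{\tilde m,\tau}$ and $P_0=P_{0,\tau}$; this $a$ lies in $\U$ and satisfies \eqref{1.4}.

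For step (ii): let $(\psi_k)$ be an orthonormal eigenbasis of $\Lambda_a$ with eigenvalues $\lambda_k\ge 0$, and recall $\psi_0=\phi_0\perp\phi_m$. Since $\phi_m\in C^\infty(\S)$ and $\Lambda_a$ is elliptic of order $1$, the coefficients $\langle\phi_m,\psi_k\rangle$ decay faster than any power of $\lambda_k$ (from $\|\Lambda_a^N\phi_m\|^2=\sum_k\lambda_k^{2N}|\langle\phi_m,\psi_k\rangle|^2<\infty$ for all $N$), so $F(s):=\langle(\Lambda_a+P_0)^s\phi_m,\phi_m\rangle=\sum_{k\neq 0}\lambda_k^s|\langle\phi_m,\psi_k\rangle|^2$ is a real-analytic function of $s\in\R$, differentiated term by term, with $F(0)=1$ and $F'(0)=\langle\ln(\Lambda_a+P_0)\phi_m,\phi_m\rangle<\ln m$. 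Since $s\mapsto m^s$ also equals $1$ at $s=0$ with derivative $\ln m$, the difference $F(s)-m^s$ vanishes at $s=0$ with strictly negative derivative, hence is negative on some $(0,s')$. Applying the same reasoning to $G(s):=\langle(\Lambda_a+P_0)^{-s}\phi_{\tilde m},\phi_{\tilde m}\rangle$, for which $G(0)=1$ and $G'(0)=-\langle\ln(\Lambda_a+P_0)\phi_{\tilde m},\phi_{\tilde m}\rangle<-\ln\tilde m$, shows $G(s)-\tilde m^{-s}<0$ on some $(0,s'')$. Taking $s_m:=\tfrac12\min(s',s'',1)\in(0,1)$ delivers both inequalities of the statement for all $s\in(0,s_m)$.

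There is no serious obstacle here, since the substance is already contained in Theorem \ref{thm3} together with the asymptotics of $\gamma$, and the corollary is bookkeeping plus the elementary $s$-expansion. The only point deserving a sentence of care is the smoothness of $s\mapsto F(s)$ and the identity $F'(0)=\langle\ln(\Lambda_a+P_0)\phi_m,\phi_m\rangle$, which rests on the rapid decay of the coefficients of the smooth vector $\phi_m$ in the eigenbasis of the elliptic operator $\Lambda_a$. As a consistency check one may also note that $\tilde m\neq m$ is unavoidable: Cauchy--Schwarz gives $\langle(\Lambda_a+P_0)^s\phi_m,\phi_m\rangle\,\langle(\Lambda_a+P_0)^{-s}\phi_m,\phi_m\rangle\ge 1$, so the two inequalities could never refer to the same index --- and indeed in the construction $\tilde m=r-m$ is large, hence distinct from $m$.
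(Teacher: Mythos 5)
Your proof is correct and follows essentially the same route as the paper: the same family $\alpha_\tau=(1-2\tau\cos(r\theta))^{-1}$ with $r$ large, the sign of $\gamma(r-m,m)$ and its antisymmetry giving the two logarithmic inequalities via Theorem \ref{thm3}, and then the passage to $(\Lambda_a+P_0)^{\pm s}$ for small $s>0$. The only difference is that you spell out the first-order expansion in $s$ (smoothness of $s\mapsto\langle(\Lambda_a+P_0)^s\phi_m,\phi_m\rangle$ and $F'(0)=\langle\ln(\Lambda_a+P_0)\phi_m,\phi_m\rangle$), a step the paper leaves implicit.
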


Section 6 is devoted to  preliminary Lemmas for the proof of Theorem \ref{thm3}. We conclude the proof of Theorem \ref{thm3} in Section 7. Both Sections 6 and 7 consist mainly in elementary and technical computations.

\section{Preliminary Lemmas for the proof of Theorem \ref{thm3}}

Let $\alpha_\tau$, $\tau\in[0,\ep)$,  be a $C^\infty$-variation of $\mathbf 1$ for some $\ep>0$.
We recall the definition of the smooth functions $\phi_{m,\tau}\in C^\infty(\S)$ and the definition of the operators $P_{0,\tau},\ \Lambda_{\alpha_\tau}:C^\infty(\S)\to  C^\infty(\S)$ that depend smoothly on $\tau$:
$$
\phi_{m,\tau}(e^{i\theta})={1\over \sqrt{2\pi}}\alpha_\tau^{-1/2}e^{im\int_0^\theta \alpha_\tau^{-1}(e^{is})ds},\
m\in\Z,
$$
and 
$$
P_{0,\tau}=\alpha_\tau^{-{1/2}}P_{e_0}\alpha_\tau^{-1/2},\
\Lambda_{\alpha_\tau}=\alpha_\tau^{1/2}\Lambda\alpha_\tau^{1/2}.
$$
Here $P_{e_0}$ is the orthogonal projection onto the line spanned by $e_0=(2\pi)^{-1/2}$.

Straightforward computations yield the following Lemma.
\begin{lemma}
\label{lem_basic}
Let $m\in \Z$. We have
\begin{eqnarray}
\big({d\over d\tau}\phi_{m,\tau}\big)_{|\tau=0}&=&(2\pi)^{-1/2}\big({1\over 2}f+im\int_0^\theta f)e^{im\theta},\label{K70a}\\
\big({d^2\over d\tau^2}\phi_{m,\tau}\big)_{|\tau=0}&=&(2\pi)^{-1/2}\Big[\big({1\over 2}f+im\int_0^\theta f\big)^2
+{1\over 2}F-{1\over 2}f^2+im\int_0^\theta F\Big]e^{im\theta},\label{K70b}
\end{eqnarray}
where 
$
f(\theta)={\pa \alpha_\tau^{-1}\over \pa\tau}_{|\tau=0}(\theta)=-\beta(\theta)
$
and 
$
F(\theta)={\pa^2 \alpha_\tau^{-1}\over \pa\tau^2}_{|\tau=0}(\theta).
$
In addition
\begin{equation}
(\Lambda_{\alpha_\tau})_{|\tau=0}=\Lambda,\ 
(P_{0,\tau})_{|\tau=0}=P_{e_0},\label{K70f}
\end{equation}
\begin{equation}
\big({d\over d\tau}\Lambda_{\alpha_\tau}\big)_{|\tau=0}=-{1\over 2}(f \Lambda+\Lambda f),\ 
\big({d\over d\tau}P_{0,\tau}\big)_{|\tau=0}={1\over 2}(fP_{e_0}+P_{e_0}f), \label{K70c}
\end{equation}
\begin{equation}
\big({d^2\over d\tau^2}\Lambda_{\alpha_\tau}\big)_{\ep=0}=-{1\over 2}\Big(F
\Lambda+\Lambda F\Big)
+{1\over 4}(3f^2\Lambda+2f\Lambda f+3\Lambda f^2),\label{K70d}
\end{equation}
\begin{equation}
\big({d^2\over d\tau^2}P_{0,\tau}\big)_{|\tau=0}={1\over 2}\Big( F
P_{e_0}+P_{e_0}F\Big)+{1\over 4}(-f^2 P_{e_0}+2f P_{e_0} f-P_{e_0} f^2).\label{K70e}
\end{equation}
\end{lemma}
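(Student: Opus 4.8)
The plan is to derive all six identities by differentiating twice in $\tau$ the explicit formulas for $\phi_{m,\tau}$, $\Lambda_{\alpha_\tau}$ and $P_{0,\tau}$ and setting $\tau=0$; every such operation is legitimate because $\alpha\in C^\infty\big([0,\ep),C^\infty(\S)\big)$ is positive, so that $\tau\mapsto\alpha_\tau^{\pm1/2}\in C^\infty(\S)$ is $C^\infty$ (the map $b\mapsto b^{\pm1/2}$ being smooth on positive functions), and so is $\tau\mapsto h_\tau$ where $h_\tau(\theta):=\int_0^\theta\alpha_\tau^{-1}(e^{is})\,ds$ (differentiation in $\tau$ commuting with the $\theta$-integral since all $\tau$-derivatives of $\alpha_\tau^{-1}$ are jointly continuous in $(\tau,\theta)$). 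First I would record the scalar building blocks. With $g_\tau:=\alpha_\tau^{-1}$ one has $g_0=1$, $(\pa_\tau g_\tau)_{|\tau=0}=f=-\beta$, $(\pa_\tau^2 g_\tau)_{|\tau=0}=F$, and differentiating the identity $\alpha_\tau g_\tau=1$ once and twice at $\tau=0$ gives $(\pa_\tau\alpha_\tau)_{|\tau=0}=-f$ and $(\pa_\tau^2\alpha_\tau)_{|\tau=0}=2f^2-F$. The chain rule then produces, at $\tau=0$: $\pa_\tau g_\tau^{1/2}=\tfrac12 f$, $\pa_\tau^2 g_\tau^{1/2}=\tfrac12 F-\tfrac14 f^2$, $\pa_\tau\alpha_\tau^{1/2}=-\tfrac12 f$, $\pa_\tau^2\alpha_\tau^{1/2}=\tfrac34 f^2-\tfrac12 F$, $\pa_\tau h_\tau=\int_0^\theta f$, $\pa_\tau^2 h_\tau=\int_0^\theta F$.

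Next I would apply the Leibniz rule to $\phi_{m,\tau}=(2\pi)^{-1/2}g_\tau^{1/2}e^{imh_\tau}$, using $\pa_\tau e^{imh_\tau}=im(\pa_\tau h_\tau)e^{imh_\tau}$; one differentiation together with $h_0(\theta)=\theta$ gives \eqref{K70a} immediately. Differentiating once more yields the five terms $(\pa_\tau^2 g_\tau^{1/2})e^{imh_\tau}$, $2(\pa_\tau g_\tau^{1/2})\,im(\pa_\tau h_\tau)e^{imh_\tau}$, $g_\tau^{1/2}\,im(\pa_\tau^2 h_\tau)e^{imh_\tau}$ and $g_\tau^{1/2}(im)^2(\pa_\tau h_\tau)^2 e^{imh_\tau}$; evaluating at $\tau=0$ and using the elementary identity $\big(\tfrac12 f+im\int_0^\theta f\big)^2=\tfrac14 f^2+imf\int_0^\theta f-m^2\big(\int_0^\theta f\big)^2$ to package the relevant contributions into a completed square (the leftover $-\tfrac14 f^2$ coming from $\pa_\tau^2 g_\tau^{1/2}$ being written as $\tfrac14 f^2-\tfrac12 f^2$) gives exactly \eqref{K70b}.

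Finally, $(\Lambda_{\alpha_\tau})_{|\tau=0}=\Lambda$ and $(P_{0,\tau})_{|\tau=0}=P_{e_0}$ are immediate from $\alpha_0=1$. Since $\Lambda$ and $P_{e_0}$ are fixed bounded operators and multiplication by a function is smooth in that function, $\tau\mapsto\Lambda_{\alpha_\tau}=\alpha_\tau^{1/2}\Lambda\alpha_\tau^{1/2}$ and $\tau\mapsto P_{0,\tau}=g_\tau^{1/2}P_{e_0}g_\tau^{1/2}$ are $C^\infty$, so the Leibniz rule applies; its first-order form, together with $\pa_\tau\alpha_\tau^{1/2}|_{\tau=0}=-\tfrac12 f$ and $\pa_\tau g_\tau^{1/2}|_{\tau=0}=\tfrac12 f$, gives \eqref{K70c}. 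The second-order Leibniz rule gives $(\pa_\tau^2\alpha_\tau^{1/2})\Lambda+2(\pa_\tau\alpha_\tau^{1/2})\Lambda(\pa_\tau\alpha_\tau^{1/2})+\Lambda(\pa_\tau^2\alpha_\tau^{1/2})$ at $\tau=0$; substituting the values found above yields \eqref{K70d}, and the same computation with $\Lambda$ replaced by $P_{e_0}$ and $\alpha_\tau^{1/2}$ by $g_\tau^{1/2}$ yields \eqref{K70e}. I do not expect a genuine obstacle here: the only points needing care are the two interchanges just invoked ($\pa_\tau$ with the $\theta$-integral and $\pa_\tau$ with operator composition), which follow from the stated $C^\infty$ regularity, and the bookkeeping that collapses the five second-order terms of $\phi_{m,\tau}$ into the completed square of \eqref{K70b}.
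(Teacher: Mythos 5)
Your proposal is correct: all the building-block derivatives ($\partial_\tau g_\tau^{1/2}$, $\partial_\tau^2 g_\tau^{1/2}$, $\partial_\tau\alpha_\tau^{1/2}$, $\partial_\tau^2\alpha_\tau^{1/2}$, $\partial_\tau h_\tau$, $\partial_\tau^2 h_\tau$ at $\tau=0$) check out, and substituting them via the Leibniz rule into $\phi_{m,\tau}=(2\pi)^{-1/2}\alpha_\tau^{-1/2}e^{imh_\tau}$, $\Lambda_{\alpha_\tau}=\alpha_\tau^{1/2}\Lambda\alpha_\tau^{1/2}$ and $P_{0,\tau}=\alpha_\tau^{-1/2}P_{e_0}\alpha_\tau^{-1/2}$ reproduces \eqref{K70a}--\eqref{K70e} exactly (including the regrouping $-\tfrac14 f^2=\tfrac14f^2-\tfrac12f^2$ that yields the completed square in \eqref{K70b}). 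This is precisely the ``straightforward computation'' the paper invokes without detailing, so your argument coincides with the intended proof.
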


Next we consider the operator $\ln(\Lambda_{\alpha_\tau}+P_{0,\tau}):C^\infty(\S)\to C^\infty(\S)$ that depends smoothly on $\tau$, see Section 5.1. Smoothness in $\tau$ is understood here as $\ln(\Lambda_{\alpha_\tau}+P_{0,\tau})\phi\in C^\infty([0,\ep)_\tau\times \S)$ for any $\phi\in C^\infty(\S)$. We compute the first and second derivatives of $\ln(\Lambda_{\alpha_\tau}+P_{0,\tau})$ at $\tau=0$. 
We introduce the function $\rho:\Z^2\times(0,+\infty)\to [0,+\infty)$ defined by 
$$
\rho(n,p,s)=\int_0^s\max(|n|,1)^t\max(|p|,1)^{s-t}dt,\ (n,p,s)\in\Z^2\times(0,+\infty).
$$
In other words
$$
\rho(n,p,s)=s\max(|p|,1)^s\textrm{ when }\max(|n|,1)=\max(|p|,1)
$$
and 
\begin{equation}
\rho(n,p,s)={\max(|n|,1)^s-\max(|p|,1)^s\over \ln(\max(|n|,1))-\ln(\max(|p|,1))}\textrm{ otherwise.}\label{K90a}
\end{equation}
We also introduce the function $h:\N\b\{0\}\times\Z\to [0,+\infty)$ defined by 
$$
h(m,p)={1\over 2 m^2}\textrm{ when }\max(|p|,1)=m,
$$
and 
\begin{equation}
h(m,p)={\ln(m)-\ln(\max(|p|,1))\over (m-\max(|p|,1))^2}-{1\over m(m-\max(|p|,1))}\textrm{ otherwise.}\label{K90b}
\end{equation}

\begin{lemma}
\label{lem_basic2}
Let $(n,p)\in\Z^2$ and let $m$ be a positive integer. We have 
\begin{equation}
{1\over 2\pi}\l {\pa\over \pa\tau}\ln(\Lambda_{\alpha_\tau}+P_{0,\tau})_{|\tau=0} e^{ip\theta},e^{in\theta}\r
={1\over 2}\rho(n,p,1)^{-1}(-|n|-|p|+\delta_n+\delta_p)\hat f_{n-p},\label{K46a}
\end{equation}
\begin{eqnarray}
{1\over 2\pi}\l\big[{\pa^2\over \pa\tau^2}\ln(\Lambda_{\alpha_\tau}+P_{0,\tau})\big]_{|\tau =0}e^{im\theta},e^{im\theta}\r
&=&m^{-1}\Big[
{3m\over 4\pi} \int_{\S}f^2+{1\over 2}\sum_{p\in\Z}|p| |\hat f_{m-p}|^2+{1\over 2}|\hat f_m|^2\Big]\nonumber\\
&&-{1\over 2}\sum_{p\in \Z}h(m,p)(|m|+|p|-\delta_p)^2|\hat f_{m-p}|^2.\label{K46b}
\end{eqnarray}
Here $\delta_n$ is the Kronecker symbol: $\delta_n=1$ when $n=0$ and $\delta_n=0$ otherwise.
\end{lemma}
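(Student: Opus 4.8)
The plan is to differentiate the operator logarithm twice in $\tau$ by means of the integral representation
\[
\ln(\Lambda_{\alpha_\tau}+P_{0,\tau})=\int_0^\infty\Big((1+t)^{-1}I-(\Lambda_{\alpha_\tau}+P_{0,\tau}+t)^{-1}\Big)\,dt ,
\]
which is legitimate because $\Lambda_{\alpha_\tau}+P_{0,\tau}$ is positive and bounded below away from $0$ (the kernel of $\Lambda_{\alpha_\tau}$ is the line spanned by $(2\pi\alpha_\tau)^{-1/2}$, on which $P_{0,\tau}$ acts as the identity), and then to read off the Fourier matrix elements at $\tau=0$ using Lemma \ref{lem_basic}. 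Writing $B_\tau=\Lambda_{\alpha_\tau}+P_{0,\tau}$, denoting by $\dot B_0,\ddot B_0$ its first and second $\tau$-derivatives at $0$ (given by \eqref{K70c}--\eqref{K70e}), and setting $R_t=(\Lambda+P_{e_0}+t)^{-1}$, which is diagonal with $R_te^{in\theta}=(\max(|n|,1)+t)^{-1}e^{in\theta}$, differentiating under the integral yields
\[
\pa_\tau\ln B_\tau\big|_0=\int_0^\infty R_t\dot B_0R_t\,dt ,\qquad
\pa_\tau^2\ln B_\tau\big|_0=\int_0^\infty\big(R_t\ddot B_0R_t-2R_t\dot B_0R_t\dot B_0R_t\big)\,dt .
\]
I would carry out the justification in a fixed Fourier matrix element, where the integrands are $O(t^{-2})$ and nonnegative where needed, so differentiation under the $t$-integral and exchange of $\int_0^\infty dt$ with the Fourier expansion of $\dot B_0$ are legitimate (dominated convergence, Tonelli); that the left-hand sides are genuine operators is already part of \eqref{5.1}. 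I would also record the sole input from the normalization: differentiating $\int_\S\alpha_\tau^{-1}\,d\theta=2\pi$ twice gives $\int_\S F\,d\theta=0$, so the constant Fourier mode $\hat F_0$ of $F=\pa_\tau^2\alpha_\tau^{-1}|_0$ vanishes --- this is exactly what removes the $F$-dependent terms of $\ddot B_0$ below.

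For \eqref{K46a} I take the $\l\,\cdot\,e^{ip\theta},e^{in\theta}\r$ matrix element of the first formula. Since $R_t$ is diagonal the scalar $\int_0^\infty(\max(|n|,1)+t)^{-1}(\max(|p|,1)+t)^{-1}\,dt$ factors out, and an elementary integration identifies it with $\rho(n,p,1)^{-1}$ (it equals $\frac{\ln\max(|n|,1)-\ln\max(|p|,1)}{\max(|n|,1)-\max(|p|,1)}$, and $\max(|n|,1)^{-1}$ when the two maxima coincide). It then remains to compute $\tfrac1{2\pi}\l\dot B_0e^{ip\theta},e^{in\theta}\r$ from \eqref{K70c}, using $\Lambda e^{ip\theta}=|p|e^{ip\theta}$, $P_{e_0}e^{ip\theta}=\delta_p\mathbf 1$ and $\tfrac1{2\pi}\l fe^{ip\theta},e^{in\theta}\r=\hat f_{n-p}$ (Fourier convention $g=\sum_k\hat g_ke^{ik\theta}$); this equals $\tfrac12(-|n|-|p|+\delta_n+\delta_p)\hat f_{n-p}$, hence \eqref{K46a}.

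For \eqref{K46b} I take the diagonal element $\l\,\cdot\,e^{im\theta},e^{im\theta}\r$ with $m>0$ of the second formula and treat the two integrals separately. The first contributes $m^{-1}\l\ddot B_0e^{im\theta},e^{im\theta}\r$; expanding $\ddot B_0$ via \eqref{K70d}--\eqref{K70e}: in \eqref{K70d} the piece $-\tfrac12(F\Lambda+\Lambda F)$ contributes $-2\pi m\hat F_0=0$ by normalization, while $\tfrac14(3f^2\Lambda+2f\Lambda f+3\Lambda f^2)$ contributes $\tfrac{3m}2\int_\S f^2+\pi\sum_{p\in\Z}|p||\hat f_{m-p}|^2$ (using $fe^{im\theta}=\sum_p\hat f_{p-m}e^{ip\theta}$ and $\Lambda$ diagonal); in \eqref{K70e} every term except $\tfrac12 fP_{e_0}f$ vanishes on the diagonal since $P_{e_0}e^{im\theta}=0$ for $m>0$, and $\tfrac12 fP_{e_0}f$ contributes $\pi|\hat f_m|^2$; dividing $\tfrac{3m}2\int_\S f^2+\pi\sum_p|p||\hat f_{m-p}|^2+\pi|\hat f_m|^2$ by $2\pi m$ gives the first line of \eqref{K46b}. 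For the second integral I insert the Fourier resolution of the identity between the two copies of $\dot B_0$, use $\tfrac1{2\pi}\l\dot B_0e^{im\theta},e^{ip\theta}\r=\tfrac12(-|p|-m+\delta_p)\hat f_{p-m}$ (the previous computation at $n=m$) and the reality of $f$, so that $\l\dot B_0e^{im\theta},e^{ip\theta}\r\l\dot B_0e^{ip\theta},e^{im\theta}\r=\pi^2(|p|+m-\delta_p)^2|\hat f_{m-p}|^2$; the diagonal element then reduces to $-\pi\sum_{p\in\Z}(|p|+m-\delta_p)^2|\hat f_{m-p}|^2\int_0^\infty(m+t)^{-2}(\max(|p|,1)+t)^{-1}\,dt$, and a partial-fraction computation identifies the $t$-integral with $h(m,p)$ ($=\frac{\ln m-\ln\max(|p|,1)}{(m-\max(|p|,1))^2}-\frac1{m(m-\max(|p|,1))}$, and $\frac1{2m^2}$ in the coincident case). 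Dividing by $2\pi$ gives the second line of \eqref{K46b}, and adding the two contributions completes the proof.

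The individual computations are routine; the step I expect to require the most care is the bookkeeping at $\tau=0$ --- in particular remembering that the $F$-dependent part of $\ddot B_0$ is precisely annihilated by the normalization constraint, and tracking which $P_{e_0}$-sandwiched terms survive the restriction to the diagonal $m>0$ matrix element --- together with the (routine but slightly delicate) justification of differentiating the operator-valued integral, which is why I would reduce everything to scalar Fourier matrix elements before differentiating. An equivalent route that avoids the integral representation is to invoke the Daleckii--Krein perturbation formula: in the Fourier eigenbasis of $\Lambda+P_{e_0}$ the first and second Fr\'echet derivatives of $\ln$ are governed by the first and second divided differences of $t\mapsto\ln t$ at the points $\max(|n|,1)$, which are precisely $\rho(n,p,1)^{-1}$ and $-h(m,p)$.
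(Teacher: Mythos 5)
Your proposal is correct, and it reaches both \eqref{K46a} and \eqref{K46b} by a genuinely different mechanism than the paper. The paper never writes an explicit formula for $\pa_\tau\ln(\Lambda_{\alpha_\tau}+P_{0,\tau})$; instead it derives the implicit operator identity \eqref{K80}, $\int_0^1(\Lambda_{\alpha_\tau}+P_{0,\tau})^t\,\pa_\tau\ln(\Lambda_{\alpha_\tau}+P_{0,\tau})\,(\Lambda_{\alpha_\tau}+P_{0,\tau})^{1-t}dt=\pa_\tau(\Lambda_{\alpha_\tau}+P_{0,\tau})$, takes Fourier matrix elements at $\tau=0$ so that the divided-difference factor $\rho(n,p,1)$ comes out of the $t$-integral, and then solves for the matrix element; for the second derivative it differentiates \eqref{K80} once more, which forces it to compute $\pa_\tau(\Lambda_{\alpha_\tau}+P_{0,\tau})^t$ via \eqref{K82} and to introduce the auxiliary function $\tilde\rho$, with the identity $h=\tilde\rho/\rho^2$ recovering \eqref{K90b}. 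You instead differentiate the resolvent representation of the logarithm (equivalently, invoke the Daleckii--Krein formula, as you note), so that $\rho(n,p,1)^{-1}$ and $-h(m,p)$ appear directly as the elementary integrals $\int_0^\infty(\max(|n|,1)+t)^{-1}(\max(|p|,1)+t)^{-1}dt$ and $-\int_0^\infty(m+t)^{-2}(\max(|p|,1)+t)^{-1}dt$, i.e.\ as first and second divided differences of $\ln$; I checked these integrals and all your constants (the $\dot B_0$ matrix element, the $\ddot B_0$ diagonal element with $\int_\S F=0$ killing the $F$-terms and only $\tfrac12 fP_{e_0}f$ surviving from \eqref{K70e}, the factor $-2$ and the final division by $2\pi$), and they reproduce \eqref{K46a}--\eqref{K46b} exactly. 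The endgame is identical to the paper's: Lemma \ref{lem_basic} for $\dot B_0,\ddot B_0$ and the normalization constraint. What your route buys is a more direct second-derivative computation (no $\pa_\tau$ of fractional powers, no $\tilde\rho$); what it costs is the justification of differentiating twice under an improper operator-valued integral, which you sensibly reduce to fixed Fourier matrix elements with $O(t^{-2})$ (resp.\ $O(t^{-3})$, after summing the rapidly decaying middle modes) bounds uniform in small $\tau$ — that uniformity (a lower bound on the spectrum of $\Lambda_{\alpha_\tau}+P_{0,\tau}$ for $\tau$ near $0$) is the one point you should state explicitly, whereas the paper sidesteps it by leaning on the smoothness in $(\tau,t)$ of $(\Lambda_{\alpha_\tau}+P_{0,\tau})^t$ established in \cite[Lemma 3.4]{JS4}.
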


\begin{proof}[Proof of Lemma \ref{lem_basic2}]
We recall that the operator $(\Lambda_{\alpha_\tau}+P_{0,\tau})^t-(|D_{\alpha_\tau}|+P_{0,\tau})^t$ is a smoothing operator at each $\tau$ and $t\in \R$ \cite[Lemma 3.4]{JS4}. Now we take into account that 
$$
(|D_{\alpha_\tau}|+P_{0,\tau})^t\phi =\sum_{l\in\Z}(\max(|l|,1))^t\l\phi,\phi_{l,\tau}\r\phi_{l,\tau}\in C^\infty([0,\ep)_\tau\times\R_t\times \S,\C),
$$
and it follows that $(\Lambda_{\alpha_\tau}+P_{0,\tau})^t\phi\in C^\infty([0,\ep)_\tau\times\R_t\times \S,\C)$
for any $\phi\in C^\infty(\S)$. In this Section any operator is  considered as a linear operator from $C^\infty(\S)$ to $C^\infty(\S)$ and smoothness with respect to either the $t$-variable or $\tau$-variable refers to the pointwise smoothness, i.e. when the operator is applied to any $\phi\in C^\infty(\S)$.

Moreover
$$
{\pa\over \pa t}(\Lambda_{\alpha_\tau}+P_{0,\tau})^t=(\Lambda_{\alpha_\tau}+P_{0,\tau})^t\ln(\Lambda_{\alpha_\tau}+P_{0,\tau}),
$$
and 
$$
{\pa\over \pa t}\big({\pa\over \pa \tau}(\Lambda_{\alpha_\tau}+P_{0,\tau})^t\big)=\big({\pa\over \pa \tau}(\Lambda_{\alpha_\tau}+P_{0,\tau})^t\big)\ln(\Lambda_{\alpha_\tau}+P_{0,\tau})
+(\Lambda_{\alpha_\tau}+P_{0,\tau})^t{\pa\ln(\Lambda_{\alpha_\tau}+P_{0,\tau})\over \pa \tau}.
$$
Hence 
$$
{\pa\over \pa t}\Big[\big({\pa\over \pa \tau}(\Lambda_{\alpha_\tau}+P_{0,\tau})^t\big)(\Lambda_{\alpha_\tau}+P_{0,\tau})^{1-t}\Big]=
(\Lambda_{\alpha_\tau}+P_{0,\tau})^t{\pa\ln(\Lambda_{\alpha_\tau}+P_{0,\tau})\over \pa \tau}(\Lambda_{\alpha_\tau}+P_{0,\tau})^{1-t}.
$$
We integrate over $t\in[0,1]$
\begin{eqnarray}
\int_0^1(\Lambda_{\alpha_\tau}+P_{0,\tau})^t{\pa\over \pa \tau}\ln(\Lambda_{\alpha_\tau}+P_{0,\tau})(\Lambda_{\alpha_\tau}+P_{0,\tau})^{1-t} dt
={\pa\over \pa \tau}(\Lambda_{\alpha_\tau}+P_{0,\tau}),\label{K80}
\end{eqnarray}
for $\tau\in [0,\ep)$. 

Note that 
$$
(\Lambda_{\alpha_\tau}+P_{0,\tau})^s_{|\tau=0}e^{il\theta}=\max(1,|l|)^se^{il\theta}
$$ 
for any $l\in \Z$ and any $s\in \R$ by \eqref{K70f}. Therefore we set $\tau=0$ on the left hand side of \eqref{K80} and we apply it to the vector $(2\pi)^{-1/2}e^{ip\theta}$ and  we take the scalar product with $(2\pi)^{-1/2}e^{in\theta}$ and we obtain by linearity of the integral over $t$ and selfadjointness of $(\Lambda_{\alpha_\tau}+P_{0,\tau})^t$
\begin{eqnarray}
&&\hskip-0.5cm(2\pi)^{-1}\l\int_0^1\big[(\Lambda_{\alpha_\tau}+P_{0,\tau})^t\big]_{|\tau=0}\big[{\pa\over \pa \tau}\ln(\Lambda_{\alpha_\tau}+P_{0,\tau})\big]_{|\tau=0}\big[(\Lambda_{\alpha_\tau}+P_{0,\tau})^{1-t}\big]_{|\tau=0} dt e^{ip\theta},e^{in\theta}\r\nonumber\\
&=&(2\pi)^{-1}\int_0^1\max(1,|p|)^{1-t}\l\big[(\Lambda_{\alpha_\tau}+P_{0,\tau})^t\big]_{|\tau=0}\big[{\pa\over \pa \tau}\ln(\Lambda_{\alpha_\tau}+P_{0,\tau})\big]_{|\tau=0}e^{ip\theta},e^{in\theta}\r\nonumber\\
&=&(2\pi)^{-1}\int_0^1\max(1,|p|)^{1-t}\l \big[{\pa\over \pa \tau}\ln(\Lambda_{\alpha_\tau}+P_{0,\tau})\big]_{|\tau=0}e^{ip\theta},(\Lambda_{\alpha_\tau}+P_{0,\tau})^te^{in\theta}\r dt\nonumber\\
&=&(2\pi)^{-1}\int_0^1 (\max(1,|n|)^t\max(1,|p|))^{1-t} dt
\l{\pa\over \pa \tau}\ln(\Lambda_{\alpha_\tau}+P_{0,\tau})_{|\tau=0}e^{ip\theta},e^{in\theta} \r.\label{K81a}
\end{eqnarray}
Now set $\tau=0$ on the right hand side of \eqref{K80} and apply it to the vector $(2\pi)^{-1/2}e^{ip\theta}$ and  take the scalar product with $(2\pi)^{-1/2}e^{in\theta}$ and use \eqref{K70c} to obtain
\begin{equation}
(2\pi)^{-1}\l{\pa\over \pa \tau}(\Lambda_{\alpha_\tau}+P_{0,\tau})_{|\tau=0}e^{ip\theta},e^{in\theta}\r=-{1\over 2}(|n|+|p|)\hat f_{n-p}
+{1\over 2}\delta_p\hat f_n+{1\over 2}\delta_n\hat f_{-p}.\label{K81b}
\end{equation}
We equate \eqref{K81a} and \eqref{K81b} and this gives \eqref{K46a}.

We prove \eqref{K46b}. First we replace the operator $\Lambda_{\alpha_\tau}+P_{0,\tau}$ by the operator $(\Lambda_{\alpha_\tau}+P_{0,\tau})^t$ in \eqref{K80} and we obtain that 
\begin{eqnarray*}
{\pa\over \pa\tau}(\Lambda_{\alpha_\tau}+P_{0,\tau})^t&=&\int_0^t(\Lambda_{\alpha_\tau}+P_{0,\tau})^{r-t}{\pa\over \pa \tau}\ln(\Lambda_{\alpha_\tau}+P_{0,\tau})(\Lambda_{\alpha_\tau}+P_{0,\tau})^r dr.
\end{eqnarray*}
We repeat the same reasoning as in \eqref{K81a} and we obtain
\begin{eqnarray}
\l {\pa\over \pa\tau}(\Lambda_{\alpha_\tau}+P_{0,\tau})^t_{|\tau=0} e^{ip\theta},e^{in\theta}\r
&=&\rho(n,p,t)\l {\pa\over \pa \tau}\ln(\Lambda_{\alpha_\tau}+P_{0,\tau}) e^{ip\theta},e^{in\theta}\r\nonumber\\
&=&{1\over 2}{\rho(n,p,t)\over\rho(n,p,1)}(-|n|-|p|+\delta_n+\delta_p)\hat f_{n-p}.\label{K82}
\end{eqnarray}

Now we derive \eqref{K80} with respect to $\tau$ and we obtain
\begin{eqnarray}
&&\int_0^1(\Lambda_{\alpha_\tau}+P_{0,\tau})^t{\pa^2\over \pa \tau^2}\ln(\Lambda_{\alpha_\tau}+P_{0,\tau})(\Lambda_{\alpha_\tau}+P_{0,\tau})^{1-t} dt\nonumber\\
&=&{\pa^2\over \pa \tau^2}(\Lambda_{\alpha_\tau}+P_{0,\tau})-\int_0^1{\pa\over \pa\tau}(\Lambda_{\alpha_\tau}+P_{0,\tau})^t{\pa\over \pa \tau}\ln(\Lambda_{\alpha_\tau}+P_{0,\tau})(\Lambda_{\alpha_\tau}+P_{0,\tau})^{1-t} dt\nonumber\\
&&-\int_0^1(\Lambda_{\alpha_\tau}+P_{0,\tau})^t{\pa\over \pa \tau}\ln(\Lambda_{\alpha_\tau}+P_{0,\tau}){\pa\over \pa\tau}(\Lambda_{\alpha_\tau}+P_{0,\tau})^{1-t} dt.\label{K83}
\end{eqnarray}

Set $\tau=0$ on the left hand side of \eqref{K83} and apply it to the vector $(2\pi)^{-1/2}e^{im\theta}$ and take the scalar product with $e^{im\theta}$:
\begin{eqnarray}
&&\hskip -0.5cm(2\pi)^{-1}\l \int_0^1\big[(\Lambda_{\alpha_\tau}+P_{0,\tau})^t\big]_{|\tau=0}\big[{\pa^2\over \pa \tau^2}\ln(\Lambda_{\alpha_\tau}+P_{0,\tau})\big]_{|\tau=0}\big[(\Lambda_{\alpha_\tau}+P_{0,\tau})^{1-t}\big]_{|\tau=0} dt e^{im\theta},e^{im\theta} \r\nonumber\\
&=&m \l \big[{\pa^2\over \pa \tau^2}\ln(\Lambda_{\alpha_\tau}+P_{0,\tau})\big]_{|\tau=0}e^{im\theta},e^{im\theta} \r.\label{K84}
\end{eqnarray}

Then we consider the first term on the right hand side of \eqref{K83}. We use \eqref{K70d} and \eqref{K70e} and we use the identity $\l F e^{im\theta},e^{im\theta}\r=\int_{\S}F=\big[{\pa^2\over \pa\tau^2}\int_{\S}\alpha_\tau^{-1}\big]_{|\tau=0}=0$ by the normalizing condition \eqref{1.4}, and we obtain
\begin{equation}
(2\pi)^{-1}\l {\pa^2\over \pa \tau^2}(\Lambda_{\alpha_\tau}+P_{0,\tau})_{|\tau=0}e^{im\theta},e^{im\theta}\r
={1\over 4}({3m\over \pi} \int_{\S}f^2+2\sum_{p}|p| |\hat f_{m-p}|^2)+{1\over 2}|\hat f_m|^2.\label{K85}
\end{equation}
(We also used that $P_{e_0}e^{im\theta}=0$ for the positive integer $m$.)

Now we consider the second and third terms on the right hand side of \eqref{K83}.
Set $\tau=0$ in the second term and use \eqref{K46a} and \eqref{K82}
\begin{eqnarray}
&&\hskip-0.5cm(2\pi)^{-1}\l\int_0^1\big[{\pa\over \pa\tau}(\Lambda_{\alpha_\tau}+P_{0,\tau})^t\big]_{|\tau=0}\big[{\pa\over \pa \tau}\ln(\Lambda_{\alpha_\tau}+P_{0,\tau})\big]_{|\tau=0}(\Lambda+P_0)^{1-t} dt e^{im\theta},e^{im\theta}\r\nonumber\\
&=&\hskip-0.2cm(2\pi)^{-2}\sum_{p\in \Z}\int_0^1\!\!\!\! m^{1-t}\l \big[{\pa\over \pa\tau}(\Lambda_{\alpha_\tau}+P_{0,\tau})^t\big]_{|\tau=0}e^{ip\theta},e^{im\theta}\r
\l \big[{\pa\over \pa \tau}\ln(\Lambda_{\alpha_\tau}+P_{0,\tau})\big]_{|\tau=0} e^{im\theta} , e^{ip\theta}\r dt\nonumber\\
&=&{m\over 4}\sum_{p\in \Z}\rho^{-2}(m,p,1)(m+|p|-\delta_p)^2|\hat f_{m-p}|^2
\int_0^1\rho(m,p,t)m^{-t} dt\nonumber\\
&=&{m\over 4}\sum_{p\in\Z}{\tilde \rho(m,p)\over \rho^2(m,p,1)}(m+|p|-\delta_p)^2|\hat f_{m-p}|^2.\label{K86}
\end{eqnarray}
Here we introduced the function $\tilde \rho:\N\b\{0\}\times\Z\to [0,+\infty)$ defined by 
$$
\tilde \rho(m,p)=\int_0^1\rho(m,p,s)m^{-s}ds,\ m\in\N\b\{0\},\ p\in\Z.
$$
From \eqref{K90a} it follows that
$$
\tilde \rho(m,p)={1\over 2}\textrm{ when }\max(|p|,1)=m,
$$
and 
$$
\tilde \rho(m,p)={1\over \ln(m)-\ln(\max(|p|,1))}+{\max(|p|,1)-m\over m(\ln(m)-\ln(\max(|p|,1)))^2}\textrm{ otherwise.}
$$
From \eqref{K90b} it follows that
$$
{\tilde \rho(m,p)\over \rho^2(m,p,1)}=h(m,p),\ p\in\Z.
$$

We similarly deal with the third term and we obtain 
\begin{eqnarray}
&&\hskip-1cm(2\pi)^{-1}\l\int_0^1\big[(\Lambda_{\alpha_\tau}+P_{0,\tau})^t\big]_{|\tau=0}\big[{\pa\over \pa \tau}\ln(\Lambda_{\alpha_\tau}+P_{0,\tau}) \big]_{|\tau=0}\big[{\pa\over \pa\tau}(\Lambda_{\alpha_\tau}+P_{0,\tau})^{1-t}\big]_{|\tau=0} dt e^{im\theta},e^{im\theta}\r\nonumber\\
&=&{m\over 4}\sum_{p\in \Z}h(m,p)(|m|+|p|-\delta_p)^2|\hat f_{m-p}|^2.
\label{K87}
\end{eqnarray}
We collect \eqref{K83}--\eqref{K87} and we use \eqref{K90a} and \eqref{K90b} and we obtain \eqref{K46b}.

\end{proof}

\section{Proof of Theorem \ref{thm3}}

First we write
\begin{eqnarray*}
{d\over d\tau}\l\ln(\Lambda_{\alpha_\tau}+P_{0,\tau})\phi_{m,\tau},\phi_{m,\tau}\r
&=&\l\big({d\over d\tau}\ln(\Lambda_{\alpha_\tau}+P_{0,\tau})\big)\phi_{m,\tau},\phi_{m,\tau}\r\\
&&+2\Re\l\ln(\Lambda_{\alpha_\tau}+P_{0,\tau}){d\over d\tau}\phi_{m,\tau},\phi_{m,\tau}\r.
\end{eqnarray*}
And we set $\tau=0$, and we use \eqref{K70a} and \eqref{K46a} and we use the identities $\phi_{m,0}=(2\pi)^{-1/2}e^{im\theta}$, $2\pi\hat f_0= \int_0^{2\pi}f=0$ and we obtain \eqref{K60a}. (The imaginary term $im\int_0^\theta f$ on the right hand side of \eqref{K46a} is disregarded when we take the real part.)

Now we prove \eqref{K60b}. We derive once more in $\tau$ the above formula and we obtain
\begin{eqnarray}
&&{d^2\over d\tau^2}\l\ln(\Lambda_{\alpha_\tau}+P_{0,\tau})\phi_{m,\tau},\phi_{m,\tau}\r
\nonumber\\
&=&\l\big({d^2\over d\tau^2}\ln(\Lambda_{\alpha_\tau}+P_{0,\tau})\big)\phi_{m,\tau},\phi_{m,\tau}\r
+4\Re\l{d\over d\tau}\ln(\Lambda_{\alpha_\tau}+P_{0,\tau}){d\over d\tau}\phi_{m,\tau},\phi_{m,\tau}\r\label{K42}\\
&&+2\Re\l\ln(\Lambda_{\alpha_\tau}+P_{0,\tau}){d^2\over d\tau^2}\phi_{m,\tau},\phi_{m,\tau}\r+2\Re\l\ln(\Lambda_{\alpha_\tau}+P_{0,\tau}){d\over d\tau}\phi_{m,\tau},{d\over d\tau}\phi_{m,\tau}\r\nonumber
\end{eqnarray}

Let us compute the three last terms of the right hand side of \eqref{K42} at $\tau=0$. We use \eqref{K70a} for the fourth term and we obtain
\begin{eqnarray}
\big[\l\ln(\Lambda_{\alpha_\tau}+P_{0,\tau}){d\over d\tau}\phi_{m,\tau},{d\over d\tau}\phi_{m,\tau}\r\big]_{|\tau=0}
&=&(2\pi)^{-1}\sum_{p\in\Z}\ln(\max(|p|,1))|\l e^{ip\theta},{d\over d\tau}\phi_{m,\tau}\r|^2_{\tau=0}\nonumber\\
&=&\sum_{p\in\Z\b\{0\}}\ln(|p|)\Big|{1\over 2}\hat f_{p-m}+im\widehat{\big(\int_0^\theta f\big)}_{p-m}\Big|^2.\label{K43a}
\end{eqnarray}
We use \eqref{K70b} for the third term and we obtain
\begin{eqnarray}
&&\big[\Re\l\ln(\Lambda_{\alpha_\tau}+P_{0,\tau}){d^2\over d\tau^2}\phi_{m,\tau},\phi_{m,\tau}\r\big]_{|\tau=0}
=(2\pi)^{-1/2}\ln(|m|)\Re\l\big({d^2\over d\tau^2}\phi_{m,\tau}\big)_{|\tau=0},e^{im\theta}\r\nonumber\\
&=&(2\pi)^{-1}\ln(|m|)\Re\int_0^{2\pi}\Big[\big({1\over 2}f+im\int_0^\theta f\big)^2
+{1\over 2}F-{1\over 2}f^2+im\int_0^\theta F\Big]d\theta\nonumber\\
&=&(2\pi)^{-1}\ln(|m|)\int_0^{2\pi}\Big[-{1\over 4}f^2-m^2\big(\int_0^\theta f\big)^2
\Big]d\theta.\label{K43b}
\end{eqnarray}
(We used that $\int_{\S}f=\int_{\S}F=0$). We use \eqref{K70a} and \eqref{K46a} for the second term and we obtain
\begin{eqnarray}
&&\Re\l{d\over d\tau}\ln(\Lambda_{\alpha_\tau}+P_{0,\tau}){d\over d\tau}\phi_{m,\tau},\phi_{m,\tau}\r_{|\tau=0}\nonumber\\
&=&(2\pi)^{-1}\Re\l\big({d\over d\tau}\ln(\Lambda_{\alpha_\tau}+P_{0,\tau})\big)_{\tau=0}({1\over 2}f+im\int_0^\theta f)e^{im\theta}, e^{im\theta}\r\nonumber\\
&=&(4\pi)^{-1}\Re\sum_{p\in \Z}{(-m-|p|+\delta_p)\over \rho(m,p,1)}\hat f_{m-p}\l({1\over 2}f+im\int_0^\theta f)e^{im\theta},e^{ip\theta}\r\nonumber\\
&=&{1\over 2}\Re\sum_{p\in \Z}{(-m-|p|+\delta_p)\over \rho(m,p,1)}\big({1\over 2}|\hat f_{m-p}|^2+im\hat f_{m-p}\widehat{\big(\int_0^\theta f\big)}_{p-m}\big).\label{K43c}
\end{eqnarray}

The first term on the right hand side of \eqref{K42} is given by \eqref{K46b} at $\tau=0$.

We collect \eqref{K42}, \eqref{K43a}, \eqref{K43b}, \eqref{K43c} and \eqref{K46b} and we obtain
\begin{eqnarray}
&&\Big[{d^2\over d\tau^2}\l\ln(\Lambda_{\alpha_\tau}+P_{0,\tau})\phi_{m,\tau},\phi_{m,\tau}\r\Big]_{|\tau=0}\nonumber\\
&&\hskip -0.3cm =m^{-1}\Big[
{3m\over 4\pi} \int_{\S}f^2+{1\over 2}\sum_{p}|p| |\hat f_{m-p}|^2+{1\over 2}|\hat f_m|^2\Big]
-{1\over 2}\sum_{p\in \Z}h(m,p)(m+|p|-\delta_p)^2|\hat f_{m-p}|^2\nonumber\\
&&\hskip -0.3cm +2\sum_{p\in\Z\b\{0\}}\ln(|p|)\Big|{1\over 2}\hat f_{p-m}+im\widehat{\big(\int_0^\theta f\big)}_{p-m}\Big|^2
+\pi^{-1}\ln(m)\int_0^{2\pi}\Big[-{1\over 4}f^2-m^2(\int_0^\theta f)^2\Big]d\theta\nonumber\\
&&\hskip -0.3cm +2\Re\sum_{p\in\Z}{(-m-|p|+\delta_p)\over \rho(m,p,1)}\big({1\over 2}|\hat f_{m-p}|^2+im\hat f_{m-p}\widehat{\big(\int_0^\theta f\big)}_{p-m}\big).\label{K44}
\end{eqnarray}
From now on every computations intend to simplify the above formula.
We introduce the function $G$
$$
G(\theta)=\int_0^\theta f,\ \hat f_k=ik\hat G_k,\ k\in \Z.
$$
Then \eqref{K44} becomes
  \begin{eqnarray*}
&&\Big[{d^2\over d\tau^2}\l\ln(\Lambda_{\alpha_\tau}+P_{0,\tau})\phi_{m,\tau},\phi_{m,\tau}\r\Big]_{|\tau=0}\\
&=&m^{-1}\Big[
{3m\over 4\pi} \int_{\S}f^2+{1\over 2}\sum_{p}|p| (m-p)^2|\hat G_{m-p}|^2)+{1\over 2}m^2|\hat G_m|^2\Big]\\
&&-{1\over 2}\sum_{p\in \Z}h(m,p)(m+|p|-\delta_p)^2(m-p)^2|\hat G_{m-p}|^2\\
&&+{1\over 2}\sum_{p\in\Z\b\{0\}}(p+m)^2\ln(|p|)|\hat G_{p-m}|^2
+\pi^{-1}\ln(m)\int_0^{2\pi}\Big[-{1\over 4}f^2-m^2G^2\Big]d\theta\\
&&+\sum_{p\in\Z}{m+|p|-\delta_p\over \rho(m,p,1)}(m-p)(m+p)|\hat G_{m-p}|^2.
\end{eqnarray*}
Next we change $p$ in $-p$ and we use the identities $\int_S f^2=2\pi\sum_{p\in\Z}(m+p)^2|\hat G_{m+p}|^2$ and $\int_S G^2=2\pi\sum_{p\in\Z}|\hat G_{m+p}|^2$ and we obtain
\begin{eqnarray}
&&\Big[{d^2\over d\tau^2}\l\ln(\Lambda_{\alpha_\tau}+P_{0,\tau})\phi_{m,\tau},\phi_{m,\tau}\r\Big]_{|\tau=0}\nonumber\\
&=&
\sum_{p\in\Z}\big({3\over 2}+{1\over 2m}|p|\big) (m+p)^2|\hat G_{m+p}|^2+{1\over 2}m|\hat G_m|^2\nonumber\\
&&-{1\over 2}\sum_{p\in \Z}h(m,p)(m+|p|-\delta_p)^2(m+p)^2|\hat G_{m+p}|^2\nonumber\\
&&+{1\over 2}\sum_{p\in\Z\b\{0\}}(m-p)^2\ln(|p|)|\hat G_{m+p}|^2
-2\ln(m)\sum_{p\in\Z}\big({1\over 4}(m+p)^2+ m^2\big)|\hat G_{m+p}|^2\nonumber\\
&&+\sum_{p\in\Z}{m+|p|-\delta_p\over \rho(m,p,1)}(m-p)(m+p)|\hat G_{m+p}|^2.\label{K45}
\end{eqnarray}

The contribution over negative integers $p$ is 
\begin{eqnarray*}
&&\sum_{p<0}|\hat G_{m+p}|^2\Big[\big({3\over 2}-{1\over 2m}p\big) (m+p)^2-{1\over 2}h(m,p)(m-p)^2(m+p)^2\\
&&+{1\over 2}(m-p)^2\ln(|p|)
-{1\over 2}\ln(m)\big( 5m^2+2mp+p^2\big)
+{(m-p)^2(m+p)\over \rho(m,p,1)}\Big].
\end{eqnarray*}
We sustitute the values for $\rho$ and $h$ \eqref{K90a}, \eqref{K90b}. The contribution over the negative integers $p$ becomes
\begin{eqnarray}
&&\sum_{p<0}|\hat G_{m+p}|^2\Big[\big({3\over 2}-{1\over 2m}p\big) (m+p)^2-{1\over 2}(m-p)^2(\ln m-\ln|p|)\nonumber\\
&&+{1\over 2m}(m-p)^2(m+p)+{1\over 2}(m-p)^2\ln(|p|)
-{1\over 2}\ln(m)\big( 5m^2+2mp+p^2\big)\nonumber\\
&&+(m-p)^2(\ln m-\ln|p|)\Big]\nonumber\\
&=&2m(-\ln(m)+1)\sum_{p<0}(m+p)|\hat G_{m+p}|^2.\label{K47}
\end{eqnarray}
Then we note that 
$$
0={1\over 2}\int_0^{2\pi}(G^2)'=\int_0^{2\pi}fG
$$
which is written in Fourier series as 
\begin{equation}
m|\hat G_m|^2+\sum_{p<0}(m+p)|\hat G_{m+p}|^2+\sum_{p>0}(m+p)|\hat G_{m+p}|^2=0.\label{K52}
\end{equation}
Hence we combine \eqref{K47} and \eqref{K52} and the contribution in \eqref{K45} over the negative integers is given by 
\begin{equation}
2m^2(\ln(m)-1)|\hat G_m|^2+2m(\ln(m)-1)\sum_{p>0}(m+p)|\hat G_{m+p}|^2.\label{K48}
\end{equation}

Now let us look at the contribution when $p=0$ in \eqref{K45}. We split this case in two: When $m=1$ and when $m>1$.
First when $(m,p)=(1,0)$ the contribution in \eqref{K45} is 
\begin{equation}
|\hat G_1|^2({3\over 2}+{1\over 2})=2|\hat G_1|^2.\label{K49a}
\end{equation}
Next when $p=0$ and $m>1$ the contribution of $p$ in \eqref{K45} is given by 
\begin{eqnarray*}
|\hat G_m|^2\big[{3\over 2} m^2+{1\over 2}m
-{1\over 2}h(m,0)(m-1)^2m^2
-{5\over 2}\ln(m)m^2+m^2{m-1\over \rho(m,0,1)}\big].
\end{eqnarray*}
We sustitute the value for $h(m,0)={\ln(m)\over (m-1)^2}-{1\over m(m-1)}$ and $\rho(m,0,1)={m-1\over \ln(m)}$, see \eqref{K90a}--\eqref{K90b}, and the contribution becomes
\begin{eqnarray}
&&|\hat G_m|^2\big[{3\over 2} m^2+{1\over 2}m
-{1\over 2}\ln(m)m^2+{1\over 2}m(m-1)
-{5\over 2}\ln(m)m^2+m^2\ln(m)\big]\nonumber\\
&=&2m^2(1-\ln(m))|\hat G_m|^2.\label{K49b}
\end{eqnarray}
Then setting $m=1$ in \eqref{K49b} yields the same contribution as \eqref{K49a}.

Next we substitute \eqref{K48}, \eqref{K49b} into \eqref{K45} and we obtain
\begin{eqnarray}
&&\Big[{d^2\over d\tau^2}\l\ln(\Lambda_{\alpha_\tau}+P_{0,\tau})\phi_{m,\tau},\phi_{m,\tau}\r\Big]_{|\tau=0}\nonumber\\
&=&
2m(\ln(m)-1)\sum_{p=1}^{\infty}(m+p)|\hat G_{m+p}|^2\label{K50}\\
&&+\sum_{p=1}^\infty|\hat G_{m+p}|^2\big[({3\over 2}+{1\over 2m}p\big) (m+p)^2
-{1\over 2} h(m,p)(m+p)^4\nonumber\\
&&+{1\over 2}(m-p)^2\ln(p)
-2\ln(m)\big({1\over 4}(m+p)^2+ m^2\big)+{(m+p)^2\over \rho(m,p,1)}(m-p)\big].\nonumber
\end{eqnarray}
Let us look at the contribution of the $m$-th summand (when $p=m$): This is given by 
\begin{eqnarray*}
&&|\hat G_{2m}|^2\big(
4m^2(\ln(m)-1)
+({3\over 2}+{1\over 2}\big) 4m^2
-8 h(m,m)m^4
-4\ln(m)m^2\big)\\
&=&|\hat G_{2m}|^2\big(
4m^2(\ln(m)-1)
+({3\over 2}+{1\over 2}\big) 4m^2-4m^2-4\ln(m)m^2\big)=0.
\end{eqnarray*}
Therefore we rewrite \eqref{K50} as
\begin{eqnarray*}
&&\Big[{d^2\over d\tau^2}\l\ln(\Lambda_{\alpha_\tau}+P_{0,\tau})\phi_{m,\tau},\phi_{m,\tau}\r\Big]_{|\tau=0}\nonumber\\
&=&\sum_{p\in \N,\ p>0, p\not=m}|\hat G_{m+p}|^2\big[
2m(\ln(m)-1)(m+p)
+({3\over 2}+{1\over 2m}p) (m+p)^2\nonumber\\
&&-{1\over 2} h(m,p)(m+p)^4+{1\over 2}(m-p)^2\ln(p)\\
&&-2\ln(m)\big({1\over 4}(m+p)^2+ m^2\big)+{(m+p)^2\over \rho(m,p,1)}(m-p)\big].\nonumber
\end{eqnarray*}
Then we replace $\rho$ and $h$ by their definitions \eqref{K90a} and \eqref{K90b} and we obtain
\begin{eqnarray*}
&&\Big[{d^2\over d\tau^2}\l\ln(\Lambda_{\alpha_\tau}+P_{0,\tau})\phi_{m,\tau},\phi_{m,\tau}\r\Big]_{|\tau=0}\nonumber\\
&=&\sum_{p\in \N,\ p>0, p\not=m}|\hat G_{m+p}|^2\big[
2m(\ln(m)-1)(m+p)\nonumber\\
&&+({3\over 2}+{1\over 2m}p) (m+p)^2
-{1\over 2} ({\ln m-\ln p\over (m-p)^2}-{1\over m(m-p)})(m+p)^4\nonumber\\
&&+{1\over 2}(m-p)^2\ln(p)
-2\ln(m)\big({1\over 4}(m+p)^2+ m^2\big)+(\ln m-\ln p)(m+p)^2\big].
\end{eqnarray*}
We regroup the 2 lonely terms in $\ln(m)$ and we obtain
\begin{eqnarray*}
&&\Big[{d^2\over d\tau^2}\l\ln(\Lambda_{\alpha_\tau}+P_{0,\tau})\phi_{m,\tau},\phi_{m,\tau}\r\Big]_{|\tau=0}\nonumber\\
&=&\sum_{p\in \N,\ p>0, p\not=m}|\hat G_{m+p}|^2\big[
-2m(m+p)+({3\over 2}+{1\over 2m}p) (m+p)^2\nonumber\\
&&-{1\over 2} ({\ln m-\ln p\over (m-p)^2}-{1\over m(m-p)})(m+p)^4\nonumber\\
&&+{1\over 2}(m-p)^2(\ln(p)-\ln(m))+(\ln m-\ln p)(m+p)^2\big].
\end{eqnarray*}

Further elementary computations give \eqref{K60b}. In particular we regroup the terms in $\ln(p)-\ln(m)$ and we easily obtain that the coefficient in front of this difference is given by 
$$
{8p^2m^2\over (m-p)^2}|\hat G_{m+p}|^2={8p^2m^2\over (m-p)^2(m+p)^2}|\hat f_{m+p}|^2={8p^2m^2\over (m-p)^2(m+p)^2}|\hat \beta_{m+p}|^2.
$$
Regrouping the others terms yields
$$
4pm{m+p\over m-p}|\hat G_{m+p}|^2=4pm{(m+p)(m-p)\over (m-p)^2(m+p)^2}|\hat \beta_{m+p}|^2.
$$

\hfill $\Box$


\begin{thebibliography}{10}

%\bibitem{CNS}
% {\sc D. Calvetti, S. Nakkireddy and E. Somersalo}, 
% {\em A Bayesian filtering approach to layer stripping for electrical impedance tomography},
% Inverse Problems (2020), accepted manuscript, 29 pp.

%\bibitem{CDS}
%{\sc C.B. Croke, N.S. Dairbekov, and V.A. Sharafutdinov},
%{\em Local boundary rigidity of a compact Riemannian manifold with curvature bounded above},
% Trans. Amer. Math. Soc. {\bf 352:9} (2000), 3937-3956.
\bibitem{DR}
{\sc P.J. Davis and P. Rabinowitz}, 
{\em Methods of numerical integration.} Second edition. Computer Science and Applied Mathematics. Academic Press, Inc., Orlando, FL, 1984. xiv+612 pp.


\bibitem{E}
 {\sc J. Edward}, {\em An inverse spectral result for the Neumann operator on planar domains}, J. Funct. Anal. {\bf 111}:2 (1993),  312--322.



%\bibitem{E0}
% {\sc J. Edward},
% {\em Spectral invariants of the Neumann operator on planar domains},
% Thesis (Ph.D.), Massachusets Institute of Technology, Dept. of Math., (1989),
% Supervised by R.~Melrose, Accession number: mit.000405207, OCLC: 21880044.
%
%
%\bibitem{E}
% {\sc J. Edward}, {\em An inverse spectral result for the Neumann operator on planar domains}, J. Funct. Anal. {\bf 111}:2 (1993),  312--322.
%
%\bibitem{E2}
% {\sc J. Edward}, {\em Pre-compactness of isospectral sets for the Neumann operator on planar domains},
% Commun. in PDE's, {\bf 18} no. 7--8 (1993), 1249--1270.

\bibitem{EW}
{\sc J. Edward and S. Wu},
{\em Determinant of the Neumann operator on smooth Jordan curves}. Proc. Amer. Math. Soc. 111 (1991), no. 2, 357--363.

%\bibitem{GK}
% {\sc V. Guillemin and D. Kazhdan}, {\em Some inverse spectral results for negatively curved 2-manifolds},
% Topology, {\bf 19} (1980), 301--312.


\bibitem{JS}
{\sc A.~Jollivet, V.~Sharafutdinov}, {\em On an inverse problem for the Steklov spectrum of a Riemannian surface},
Contemporary Mathematics, {\bf 615} (2014), 165--191.

\bibitem{JS2}
{\sc A.~Jollivet, V.~Sharafutdinov}, {\em An inequality for the Steklov spectral zeta function of a planar domain},
J. Spectr. Theory {\bf 8} (2018), 271--296.

%\bibitem{JS3}
%{\sc A.~Jollivet, V.~Sharafutdinov}, {\em Steklov zeta-invariants and a compactness theorem for isospectral families of planar domains},
%Journal of Functional Analysis {\bf 215:7} (2018), 1712--1755.
% DOI:~10.1016/j.jfa 2018.06.019.
 
\bibitem{JS4}
{\sc A.~Jollivet, V.~Sharafutdinov}, {\em An estimate for the Steklov zeta function of a planar domain derived from a first variation formula}, hal-02515278, arXiv:2004.01779, preprint 2020.

%\bibitem{Gb}
%{\sc P.B. Gilkey},
%{\em Invariance Theory, The Heat Equation, And the Atiyah--Singer Index Theorem},
%Publish or Perish, Inc., Wilmington, Delaware (U.S.A.) (1984).
%
%\bibitem{K}
%{\sc M. Kac},
%{\em Can one hear the shape of a drum?}, Amer. Math. Monthly {\bf 73}:(4), part II (1966), 1--23.

%\bibitem{K}
%{\sc T.~Kato},
%Perturbation theory for linear operators. Reprint of the 1980 edition. Classics in Mathematics. Springer-Verlag, Berlin, 1995. xxii+619 pp.

%\bibitem{Ko}
%{\sc Yu.~Kogan}, {\em Trace formulas for a spectral boundary problem},
%Functsional. Anal. i Prilozhen., {\bf 13}:4 (1979), 75--76.


%\bibitem{Kom}
%{\sc H.~Komatsu}, {\em Fractional powers of operators}, Pacific J. Math. {\bf 19} (1966), 285--346.

%\bibitem{LU}
%{\sc J. Lee and G. Uhlmann},
%{\em Determining anisotropic real-analytic conductivities by boundary measurements},
%{\it Comm. Pure Appl. Math.} {\bf 42} (1989), 1097--1112.


%\bibitem{MS}
% {\sc E. Mal'kovich and V. Sharafutdinov},
% {\em Zeta-invariants of the Steklov spectrum of a planar domain},
% Siberian Math. J., {\bf 56} (2015), no.~4, 678--698.


%\bibitem{M}
%{\sc R. Melrose},
%{\em Isopsectral sets of drumheads are compact in $C^\infty$}, preprint 1984.
%
%\bibitem{OPS1}
%{\sc B. Osgood, R. Phillips, and P. Sarnak},
%{\em Compact isospectral sets of surfaces},
%J. Funct. Anal., {\bf 80} (1988), 212--234.
%
%\bibitem{OPS2}
%{\sc B. Osgood, R. Phillips, and P. Sarnak},
%{\em  Moduli space, heights and isospectral sets of plane domains},
%Annals of Math., {\bf 129} (1989), 293--362.
%
 

%\bibitem{Ro}
%{\sc G. Rozenblum}, {\em Asymptotic behavior of the eigenvalues for some two-dimensional spectral problems},
%Selecta Math. Soviet., {\bf 5}:3 (1986), 233--244.

%\bibitem{S}
% {\sc V. Sharafutdinov},
% {\em Local audibility of a hyperbolic metric},
%Siberian Math. J., {\bf 50:5} (2009), 929-944.

%\bibitem{Si}
%{\sc B. Simon},
%{\em Trace ideals and their applications}.
% Second edition. Mathematical Surveys and Monographs, 120. American Mathematical Society, Providence, RI (2005). viii+150 pp.

%\bibitem{Sz}
%{\sc G. Szeg\"o},
%{\em Inequalities for certain eigenvalues of a membrane of given area},
%J. Rational Mech. Anal., {\bf 3} (1954), 343--356.
%

\bibitem{Tit}
{\sc E.C. Titchmarsh},
{\em The theory of the Riemann zeta-function}. Second edition. Edited and with a preface by D. R. Heath-Brown. The Clarendon Press, Oxford University Press, New York, 1986. x+412 pp.


\bibitem{We}
{\sc R. Weinstock},
{\em Inequalities for a classical eigenvalue problem},
J. Rational Mech. Anal., {\bf 3} (1954), 745--753.

\end{thebibliography}
\end{document}